\numberwithin{equation}{section}
\numberwithin{figure}{section}
\numberwithin{table}{section}
\newtheorem{theorem}{Theorem}[section]
\newtheorem{lemma}[theorem]{Lemma}
\newtheorem{prop}[theorem]{Proposition}
\newtheorem{defi}[theorem]{Definition}
\newtheorem{remark}[theorem]{Remark}
\theoremstyle{nonumberbreak}
\newtheorem{proof}{Proof}
\DeclareMathAccent{\Circ}{\mathalpha}{operators}{"17}
\newcommand{\interior}[1]{\Circ{#1}}
\DeclareMathOperator{\Div}{div}
\DeclareMathOperator{\grad}{grad}
\newcommand{\e}{\mathrm{e}}
\newcommand{\eps}{\varepsilon}
\newcommand{\boldb}{{\boldsymbol{b}}}
\newcommand{\boldn}{{\boldsymbol{n}}}
\newcommand{\boldu}{{\boldsymbol{u}}}
\newcommand{\domain}[1]{\mathcal{D}(#1)}
\newcommand{\range}[1]{\mathcal{R}(#1)}
\title{A solution decomposition for a singularly perturbed fourth-order problem}
\author{Sebastian Franz, 
        Katharina H\"ohne, 
        Marcus Waurick}
\begin{document}
\maketitle

\begin{abstract}
   We consider a singularly perturbed fourth-order problem with
   third-order terms on the unit square.
   With a formal power series approach, we decompose the solution into 
   solutions of reduced (third-order) problems and various layer
   parts. The existence of unique solutions for the problem itself and
   for the reduced third-order problems is also addressed.
\end{abstract}
\textbf{Key words}: asymptotic expansion, singular perturbations, fourth-order problem, boundary layers\\
\\
\textbf{AMS subject classifications}: 35B25, 35C20, 35G15 
\section{Introduction}

In the present paper we are concerned with the singularly perturbed problem
\begin{equation}\label{eq:fourthorder}
   \begin{split}
      L\psi:=\varepsilon \Delta^2 \psi + (\boldsymbol{b} \cdot \nabla)\Delta \psi -c \Delta \psi=f \hspace{2em}&\mbox{ in }\Omega=(0,1)^2,\\
      \psi=0\hspace{2em}&\mbox{ on }\Gamma=\partial \Omega,\\
      \partial_\boldn \psi=0\hspace{2em}&\mbox{ on }\Gamma,
   \end{split}
\end{equation}
where $\boldsymbol{b}=(b_1,b_2)$ with $b_1,b_2>0$ and $c> 0$ are given, and
the perturbation parameter $\eps$ is supposed to be very small with $0<\eps\ll 1$.

The problem \eqref{eq:fourthorder} arises from different physical models. 
In particular, the equations \eqref{eq:fourthorder} can be formally derived from the Oseen equations, 
that is, from the streamfunction-vorticity formulation of the Oseen equations.
In this context the parameter $\eps$ is the reciprocal of the Reynolds number.
If the Reynolds number gets very large, the flow is said to be turbulent.
Although the Oseen equations are usually considered as a model for the moderate Reynolds-number regime,
we are interested in the high-Reynolds number case and see the Oseen equations
as linearisation of the non-linear Navier-Stokes equations.

Apart from the motivation in fluid dynamics fourth-order problems are frequently studied, when 
modelling plate-bending problems.
In contrast to our problem, this kind of problems is well understood
and numerical analysis can be found, see \cite{BR80,BrennerSung05,GuzmanLeykekhmanNeilan12,EngelGHLMT02,FrRW12}, just to name a few.
The main difference, however, is that the equations treated in the references cited do not contain third-order terms.
Thus, the corresponding reduced problem is elliptic which simplifies the asymptotic analysis.

Our method of choice for finding a proper solution decomposition into an interior part (arising from 
solutions of third-order problems) and layer parts is the method of asymptotic expansions. This approach can, 
for instance, be found in \cite{KSt05,KSt07,LS01,JF96}, where it is applied to second-order problems. 
Roughly rephrasing the rationale of asymptotic expansions, we consider a reduced problem, where formally $\eps$ is set to $0$
and certain boundary conditions are neglected, to construct solution parts that are independent
of $\eps$. The misfit of certain boundary conditions is then corrected by boundary
layer terms. As an ansatz for the boundary layer terms, we choose functions that exponentially decay away from the boundary.

More precisely, with the exponentially decaying functions
\[
 E_1(x) = \e^{-b_1\frac{x}{\varepsilon}}
 \quad\mbox{and}\quad
 E_2(y) = \e^{-b_2\frac{y}{\varepsilon}},
\]
our final result reads as follows: Assuming appropriate conditions on the right-hand side $f$, the solution $\psi$ of 
\eqref{eq:fourthorder} admits a decomposition
\[
      \psi(x,y)=S(x,y) + \eps H(x,y)   E_1(x)
                       + \eps I(x,y)   E_2(y)
                       + \eps^2 J(x,y) E_1(x)E_2(y),
\]
where the functions $S,\,H,\,I$ and $J$ are bounded independently of $\eps$.
Similar decompositions do also hold for the first derivatives.
In the one-dimensional case solution decompositions already exist in the literature, see, for instance, \cite{MR1323738}.
The decompositions given there contain so-called ``weak layers'': A property shared by the decomposition derived in this exposition.
Here, we call a function \emph{weak layer}, if it is exponentially
decaying away from the boundary, its $L^\infty$-norm vanishes for $\eps\to 0$ but the $L^\infty$-norm of its derivative does not.

The structure of this paper is as follows.
The existence of unique solutions to \eqref{eq:fourthorder} is shown and some of the solution's properties are given in Section~\ref{sec:exist}. 
In this section, we also put \eqref{eq:fourthorder} into an (abstract) functional analytic perspective. 
The methods are based on (more or less standard) perturbation theory for linear operators and a crucial regularity result 
established in \cite{BR80}. In order to establish a connection between the data of \eqref{eq:fourthorder} and its solution
we present a stability estimate in Section~\ref{sec:stab}, which relies on results of \cite{MR91}.
Finally, Section~\ref{sec:decomposition} contains the derivation of the decomposition mentioned above.

\subsection*{Notation}
Throughout this article, the domain $\Omega$ will be the unit square $(0,1)^2$.
Unless it is clear from the context, we will denote by $\|\cdot\|_{X}$ the norm of the Banach space $X$. 
Moreover, we will frequently neglect the reference to the underlying domain $D$ of certain Lebesgue or Sobolev spaces, 
that is, we will write, for instance, $\|\cdot\|_{L^2}$ instead of $\|\cdot\|_{L^2(D)}$.

\section{Existence and uniqueness of solutions}\label{sec:exist}

This section is devoted to the proof of existence and uniqueness of solutions of \eqref{eq:fourthorder}, for any $f\in L^2(\Omega)$. 
We approach the problem from an abstract point of view in Subsection \ref{ssec:per}. 
The findings are then applied to \eqref{eq:fourthorder} in Subsection \ref{ssec:4th}.

\subsection{Perturbation theorems for operators in Hilbert spaces}\label{ssec:per}

In this subsection we shall elaborate on some elements of perturbation theory of linear operators in Hilbert spaces. 
Large parts can be found in the standard reference \cite[Chapter III]{kato}. As some of the well-known results focus 
on selfadjoint/symmetric operators, some of the theorems might not be directly applicable. Hence, we provide full proofs of the 
respective results, though the general techniques employed are not new. 
The final aim of this section is to establish a proof of Theorem \ref{thm:sol_thy}. 
For stating and eventually proving the theorem, we need the following notion of relative boundedness.

\begin{defi} Let $H$ be a Hilbert space, $A:\domain{A}\subseteq H\to H$, $B\colon \domain{B}\subseteq H\to H$ be linear operators. 
$B$ is called \emph{$A$-bounded} with $A$-bound $\kappa\geq 0$, if $\domain{B}\supseteq \domain{A}$ and for all $\kappa'>\kappa$ 
there exists $C_{\kappa'}>0$ such that for all $\phi\in \domain{A}$ the inequality
\[
   \|B\phi\|\leq \kappa'\|A\phi\|+C_{\kappa'}\|\phi\|
\]
holds true. The operator $B$ is called \emph{infinitesimally $A$-bounded}, if $B$ is $A$-bounded with $A$-bound $0$. 
\end{defi}

For later use, for a linear operator $A\colon \domain{A}\subseteq H\to H$ in a Hilbert space $H$, we denote the space $\domain{A}$, the domain of $A$,
endowed with the graph scalar product of $A$ by $D_A$. Recall that $A$ is a closed operator if and only if $D_A$ is a Hilbert space.

\begin{remark}\label{rem:inf_bdd} Let, in this remark, $\Omega\coloneqq (0,1)^n\subseteq \mathbb{R}^n$ for some $n\in \mathbb{N}$.

(a) Let $j\in \{1,\ldots,n\}$. Consider the operator $\partial_j \colon H_j^1(\Omega)\subseteq L^2(\Omega)\to L^2(\Omega), f\mapsto \partial_j f$, 
where $H_j^1(\Omega)\coloneqq \{f\in L^2(\Omega); \partial_j f\in L^2(\Omega)\}$. 
Then the operator $\partial_j$ is infinitesimally $\partial_j^2$ bounded. 
This is a direct consequence of \cite[p. 191]{kato}.

(b) For any $k,m\in \mathbb{N}_{>0}$, $k>m$, and $\kappa>0$ there exists $C_{\kappa}>0$ such that for all $\phi\in H^k(\Omega)$ the inequality
\begin{equation}\label{eq:rel_bdd_km}
  \|\phi\|_{H^m}\leq C_{\kappa}\|\phi\|_{L^2}+\kappa\|\phi\|_{H^k}
\end{equation}
holds true. In order to prove the claimed inequality \eqref{eq:rel_bdd_km}, we proceed by induction: 
The case $k\geq 2$ and $m=1$ is a consequence of part (a) and of the Lipschitz continuity of the embedding $H^k\hookrightarrow H^\ell$ 
for every $\ell\leq k$ with Lipschitz constant $1$. 
Next, assume the inequality \eqref{eq:rel_bdd_km} is valid for some $m\in \mathbb{N}_{>0}$ and all $k>m$. 
Let $\kappa>0$ and $k>m+1$. Employing the induction hypothesis, we find $C_\kappa>0$ such that for all $\phi\in H^{m+1}(\Omega)$ 
\begin{align*}
  \|\phi\|_{H^{m+1}} & \leq \|\phi\|_{L^2}+\sum_{j=1}^n \|\partial_j\phi\|_{H^m}\\ 
                     & \leq \|\phi\|_{L^2}+\sum_{j=1}^n \left(C_{\kappa}\|\partial_j\phi\|_{L^2}+\kappa\|\partial_j \phi\|_{H^{k-1}}\right).
\end{align*}
By part (a) and the fact that $k\geq 2$, for $\kappa'>0$ there exists $C_{\kappa'}>0$ such that we may estimate further
\begin{align*}
    \|\phi\|_{H^{m+1}} 
          & \leq \|\phi\|_{L^2(\Omega)}+\sum_{j=1}^n \left(C_{\kappa}C_{\kappa'}\|\phi\|_{L^2}+C_{\kappa}\kappa'\|\phi\|_{H^k(\Omega)}+\kappa\|\partial_j \phi\|_{H^{k-1}}\right)\\ 
          & \leq (1+n C_{\kappa}C_{\kappa'}) \|\phi\|_{L^2} +(nC_{\kappa}\kappa'+\kappa)\| \phi\|_{H^{k}}.
\end{align*}
The latter inequality yields the proof of the inductive step.

(c) Let $k,m\in \mathbb{N}_{>0}$, $k>m$. Let $A\colon \domain{A}\subseteq L^2(\Omega)\to L^2(\Omega)$ be closed with $\domain{A}\subseteq H^k(\Omega)$, 
$B\colon H^m(\Omega)\to L^2(\Omega)$ linear and bounded. Then $B$ considered as an operator in $L^2(\Omega)$ is infinitesimally $A$-bounded. 
Indeed, the canonical embedding $\iota\colon D_A \hookrightarrow H^k(\Omega)$ is well-defined. 
Moreover, as the mapping $D_A\hookrightarrow L^2(\Omega)$ is continuous, the operator $\iota$ is closed. 
Using the closedness of $A$, we infer that $D_A$ is a Hilbert space. Hence, $\iota$ is continuous by the closed graph theorem. 
In particular, there exists $C>0$ such that for all $\phi\in \domain{A}$ we have\footnote{In the applications to be discussed later on, 
the inequality will be guaranteed right away so that we do not really need to invoke the closed graph theorem.} 
\[
  \|\phi\|_{H^k}\leq C\|\phi\|_{D_A}
                \leq C\|A\phi\|_{L^2}+C\|\phi\|_{L^2}.
\]
Next, take $\phi\in \domain{A}$. Let $\kappa>0$ and let $C_{\kappa}>0$ as in \eqref{eq:rel_bdd_km}. 
Then, denoting by $\|B\|_{H^m\to L^2}$ the operator norm of $B$ as an operator mapping from $H^m$ to $L^2$, we compute for $\phi\in \domain{A}$
\begin{align*}
   \|B\phi\|_{L^2} & \leq \|B\|_{H^m\to L^2}\|\phi\|_{H^m}\\ 
      & \leq \|B\|_{H^m\to L^2}C_{\kappa} \|\phi\|_{L^2}+\|B\|_{H^m\to L^2}\kappa \|\phi\|_{H^k}\\  
      & \leq  \|B\|_{H^m\to L^2}C_{\kappa} \|\phi\|_{L^2}+\|B\|_{H^m\to L^2}\kappa (C\|A\phi\|_{L^2}+C\|\phi\|_{L^2})\\  
      & = (\|B\|_{H^m\to L^2}C_{\kappa}+\|B\|_{H^m\to L^2}\kappa C) \|\phi\|_{L^2}+\kappa \|B\|_{H^m\to L^2} C\|A\phi\|_{L^2},
\end{align*}
which implies the assertion.
\end{remark}

\begin{lemma}\label{lem:rel_contr_closed_sum} 
   Let $H$ be a Hilbert space, $A\colon \domain{A}\subseteq H\to H$, $B\colon \domain{B}\subseteq H\to H$ be linear operators. 
   Assume that $A$ is closed and that $B$ is $A$-bounded with $A$-bound $\kappa<1$. Then $A+B$ is closed with $\domain{A+B}=\domain{A}$
\end{lemma}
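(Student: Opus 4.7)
The plan is to take a Cauchy-style sequence in the graph of $A+B$ and bootstrap closedness of $A$ through the subunitary relative bound. First, since $B$ is $A$-bounded, $\domain{A}\subseteq \domain{B}$, so $\domain{A+B}=\domain{A}\cap \domain{B}=\domain{A}$. It then remains to show that $A+B$ is closed.

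To do so, let $(\phi_n)$ be a sequence in $\domain{A}$ with $\phi_n\to \phi$ in $H$ and $(A+B)\phi_n\to \psi$ in $H$. The key step is to show that $(A\phi_n)$ is Cauchy. Fix $\kappa'\in(\kappa,1)$ and let $C_{\kappa'}>0$ be the associated constant from the definition of $A$-boundedness. Applying the $A$-bound to $\phi_n-\phi_m\in\domain{A}$ and the triangle inequality gives
\[
  \|A(\phi_n-\phi_m)\|\leq \|(A+B)(\phi_n-\phi_m)\|+\|B(\phi_n-\phi_m)\|\leq \|(A+B)(\phi_n-\phi_m)\|+\kappa'\|A(\phi_n-\phi_m)\|+C_{\kappa'}\|\phi_n-\phi_m\|,
\]
which, since $1-\kappa'>0$, rearranges to
\[
  \|A(\phi_n-\phi_m)\|\leq \frac{1}{1-\kappa'}\bigl(\|(A+B)(\phi_n-\phi_m)\|+C_{\kappa'}\|\phi_n-\phi_m\|\bigr).
\]
This is the pivotal inequality; the whole argument hinges on $\kappa<1$, which is what permits the absorption. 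As both terms on the right tend to $0$, $(A\phi_n)$ is Cauchy in $H$ and therefore converges to some $\chi\in H$. By closedness of $A$ we conclude $\phi\in \domain{A}$ with $A\phi=\chi$.

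Finally, applying the $A$-bound to $\phi_n-\phi\in \domain{A}$ yields
\[
  \|B\phi_n-B\phi\|\leq \kappa'\|A(\phi_n-\phi)\|+C_{\kappa'}\|\phi_n-\phi\|\xrightarrow{n\to\infty}0,
\]
so $B\phi_n\to B\phi$. Combining with $A\phi_n\to A\phi$ we obtain $(A+B)\phi_n\to (A+B)\phi$, whence $\psi=(A+B)\phi$. Thus the graph of $A+B$ is closed, and $\domain{A+B}=\domain{A}$ as noted above.

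The only nontrivial ingredient is the rearrangement step producing a Cauchy property for $(A\phi_n)$; everything else is bookkeeping that uses the closedness of $A$ and the continuity estimate supplied by the $A$-bound with $\kappa<1$. I do not expect any real obstacle beyond making sure $\kappa'$ is chosen strictly between $\kappa$ and $1$ so that $1-\kappa'$ is positive and can be divided through.
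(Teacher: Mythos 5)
Your proof is correct and rests on exactly the same key step as the paper's: absorbing $\kappa'\|A(\phi_n-\phi_m)\|$ into the left-hand side using $\kappa'<1$ to control $\|A\cdot\|$ by the graph norm of $A+B$. The paper merely packages this as an equivalence of the graph norms $\|\cdot\|+\|A\cdot\|$ and $\|\cdot\|+\|(A+B)\cdot\|$ on $\domain{A}$, whereas you unfold it into an explicit Cauchy-sequence argument; the substance is identical.
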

\begin{proof}
   Note that, by the $A$-boundedness of $B$, $\domain{B}\supseteq \domain{A}$. Hence, the natural domain of $A+B$ coincides with the one of $A$. 
   Thus, only the closedness needs to be shown. For this, let $C>0$ be such that $\|Bx\|\leq C\|x\|+\kappa'\|Ax\|$ for some $\kappa'<1$. 
   Then, we compute
   \[
      \|Ax\|\leq \|(A+B)x\|+\|Bx\|
            \leq \|(A+B)x\|+C\|x\|+\kappa'\|Ax\| \quad(x\in \domain{A})
   \]
   Hence, $\|x\|+(1-\kappa')\|Ax\| \leq \|(A+B)x\|+(C+1)\|x\|$ $(x\in \domain{A})$. On the other hand, we realize
   \[
     \|(A+B)x\|+\|x\|\leq (1+\kappa)\|Ax\|+(C+1)\|x\|\quad(x\in \domain{A}).
   \]
   Therefore, the norms $\|\cdot\|+\|A\cdot\|$ and $\|\cdot\|+\|(A+B)\cdot\|$ are equivalent, proving the assertion.
\end{proof}

We recall that if $A$ is a selfadjoint operator in a Hilbert space $H$, that is, we have $A=A^*$, then the operator $A$ is necessarily closed and densely defined. 

For properly computing the adjoint of the sum $A+B$, we need a condition on $B^*$:

\begin{lemma}\label{lem:form_for_adjoint}  Let $H$ be Hilbert space, $A\colon \domain{A}\subseteq H\to H$, $B\colon\domain{A}\subseteq H\to H$ linear operators. Assume that $A$ is selfadjoint, $\domain{B}\supseteq \domain{A}$ and that $B^*$ is $A$-bounded with $A$-bound $<1$. Then 
\[
   (A+B)^*=A+B^*
\]
\end{lemma}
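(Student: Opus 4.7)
The statement amounts to verifying both inclusions $A+B^*\subseteq(A+B)^*$ and $(A+B)^*\subseteq A+B^*$. The first is a direct adjoint computation: for $y\in\domain{A}=\domain{A+B^*}$ the $A$-boundedness of $B^*$ forces $y\in\domain{B^*}$, and then for $x\in\domain{A+B}=\domain{A}$,
\[
   \langle(A+B)x,y\rangle=\langle x,Ay\rangle+\langle x,B^*y\rangle=\langle x,(A+B^*)y\rangle,
\]
by selfadjointness of $A$ and the defining adjoint relation for $B$, so $y\in\domain{(A+B)^*}$ with $(A+B)^*y=(A+B^*)y$.

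For the reverse inclusion I would use a resolvent/Neumann series argument. Lemma~\ref{lem:rel_contr_closed_sum} applied to $A$ and $B^*$ first guarantees that $A+B^*$ is closed with $\domain{A+B^*}=\domain{A}$. Next, since $A$ is selfadjoint, the spectral theorem provides the norm estimates $\|(A+itI)^{-1}\|\le 1/t$ and $\|A(A+itI)^{-1}\|\le 1$; together with the $A$-bound of $B^*$ (with constant $\kappa'<1$) this yields
\[
   \|B^*(A+itI)^{-1}\|\le \kappa'+C_{\kappa'}/t<1
\]
for all $t$ sufficiently large. A Neumann series then makes $I+B^*(A+itI)^{-1}$ boundedly invertible on $H$, and the factorization $A+B^*+itI=(I+B^*(A+itI)^{-1})(A+itI)$ shows that $A+B^*+itI\colon\domain{A}\to H$ is bijective.

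Given $y\in\domain{(A+B)^*}$, solving $(A+B^*+itI)y'=((A+B)^*+itI)y$ for some $y'\in\domain{A}$ and invoking the forward inclusion gives $((A+B)^*+itI)y'=((A+B)^*+itI)y$, so that $y-y'\in\ker((A+B)^*+itI)$. If this kernel is trivial, one concludes $y=y'\in\domain{A}$ and the value agreement $(A+B)^*y=(A+B^*)y$ follows directly.

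The main obstacle is precisely establishing the triviality of this kernel, which by the identity $(A+B-itI)^*=(A+B)^*+itI$ is equivalent to the range of $A+B-itI$ being dense in $H$. Because only $B^*$, and not $B$ itself, is controlled by the $A$-bound, the Neumann series cannot be applied directly to $A+B-itI$. The plan here is to exploit that the adjoint of the bijective operator $A+B^*+itI$ is again bijective and, via the inclusion $A+B\subseteq(A+B^*)^*$ (which one checks analogously to the forward inclusion, using $\domain{A}\subseteq\domain{B}$ and the selfadjointness of $A$) together with closedness of $(A+B^*)^*$, to identify $\domain{A}$ as a core of $(A+B^*)^*$. This core property delivers the required density of the range of $A+B-itI$ and thereby completes the proof.
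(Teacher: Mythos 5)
Your forward inclusion $A+B^*\subseteq(A+B)^*$ and the Neumann-series argument making $A+B^*+itI$ bijective for large $|t|$ are both correct (the latter is essentially Lemma~\ref{lem:B_res_small} of the paper). The genuine gap is exactly where you locate it: the triviality of $\ker\bigl((A+B)^*+itI\bigr)$, equivalently the density of $\range{A+B-itI}$, equivalently the claim that $\domain{A}$ is a core of $(A+B^*)^*$. The ingredients you propose for this step do not suffice: the inclusion $A+B\subseteq(A+B^*)^*$ and the closedness of $(A+B^*)^*$ hold automatically for any adjoint and cannot single out $\domain{A}$ as a core. Worse, the core property you need is \emph{equivalent} to the lemma itself: $\domain{A}$ is a core of $(A+B^*)^*$ iff $(A+B^*)^*=\overline{A+B}$, and taking adjoints (using that $A+B^*$ is closed) this is precisely $(A+B)^*=A+B^*$. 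So the ``plan'' restates the problem rather than solving it. Note also that nothing in the hypotheses bounds $B$ itself relative to $A$ --- only $B^*$ is controlled --- so no Neumann series can be run on $B(A+itI)^{-1}$, and some device is needed to transfer information from $B^*$ to $B$.

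The paper's proof supplies exactly that device: the identity $\overline{(A-z^*)^{-1}B}=(B^*(A-z)^{-1})^*$, valid because $B$ is densely defined and the right-hand side is bounded. This shows that $(A+\mu)^{-1}B$ (resolvent on the \emph{left}) has a bounded closure of norm $<1$, even though $B$ alone is uncontrolled. One then factorizes $(A+B+\mu)y=(A+\mu)\bigl(1+(A+\mu)^{-1}B\bigr)y$ for $y\in\domain{A}$, observes that $\range{(1+(A+\mu)^{-1}B)|_{\domain{A}}}$ is dense in $H$ (image of the dense set $\domain{A}$ under an isomorphism), and rewrites the adjoint relation $\langle (A+B+\mu)y,x\rangle=\langle y,(A+B+\mu)^*x\rangle$ as a boundedness statement for the functional $w\mapsto\langle(A+\mu)w,x\rangle$ on that dense set, concluding $x\in\domain{A}$ directly --- it never needs the surjectivity or dense range of $A+B+\mu$ that your route requires. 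To repair your argument you would have to import this dualization step; without it the proof does not close.
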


We need the following prerequisit.

\begin{lemma}\label{lem:B_res_small}
 Let $H$ be a Hilbert space, $A\colon \domain{A}\subseteq H\to H$, $B\colon \domain{B}\subseteq H\to H$ be linear operators. Assume that $A$ is selfadjoint and that $B$ is $A$-bounded with $A$-bound $<1$. Then there exists $z\in \rho(A)$ such that 
 \[
    \|B(A-z)^{-1}\|<1.
 \] 
\end{lemma}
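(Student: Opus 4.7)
The plan is to exploit the selfadjointness of $A$ via the spectral theorem applied along the imaginary axis. Since $\sigma(A)\subseteq \mathbb{R}$, for any $t\in \mathbb{R}\setminus\{0\}$ the number $z=it$ lies in $\rho(A)$, and the resolvent $(A-z)^{-1}$ admits sharp bounds which combine favorably with the $A$-boundedness hypothesis.

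First, I would record two estimates that follow directly from the functional calculus for the selfadjoint operator $A$: for any $t\in \mathbb{R}\setminus\{0\}$,
\[
   \|(A-it)^{-1}\| \leq \frac{1}{|t|} \quad\text{and}\quad \|A(A-it)^{-1}\| \leq 1.
\]
The first is standard; the second follows because, by the spectral theorem, this operator is unitarily equivalent to multiplication by the function $\lambda\mapsto \lambda/(\lambda-it)$, whose modulus equals $|\lambda|/\sqrt{\lambda^2+t^2}\leq 1$ on $\mathbb{R}$.

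Next, let $\kappa<1$ denote the $A$-bound of $B$, choose any $\kappa'\in(\kappa,1)$, and let $C_{\kappa'}>0$ be the constant from the definition of $A$-boundedness. For arbitrary $x\in H$, setting $\phi=(A-it)^{-1}x\in \domain{A}$ and combining the inequality $\|B\phi\|\leq \kappa'\|A\phi\|+C_{\kappa'}\|\phi\|$ with the two resolvent bounds above yields
\[
   \|B(A-it)^{-1}x\| \leq \kappa'\|A(A-it)^{-1}x\| + C_{\kappa'}\|(A-it)^{-1}x\| \leq \left(\kappa' + \frac{C_{\kappa'}}{|t|}\right)\|x\|.
\]
Taking the supremum over $\|x\|\leq 1$ gives $\|B(A-it)^{-1}\|\leq \kappa'+C_{\kappa'}/|t|$. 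Since $\kappa'<1$, I would then choose $|t|$ so large that $C_{\kappa'}/|t|<1-\kappa'$, and set $z:=it$; this $z$ lies in $\rho(A)$ and satisfies $\|B(A-z)^{-1}\|<1$, as required.

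There is no real obstacle here; the only subtlety is making sure that the relative bound is applied to elements of $\domain{A}$, which is automatic since $(A-it)^{-1}$ maps $H$ onto $\domain{A}$. The quantitative bound $\|A(A-it)^{-1}\|\leq 1$ is the key ingredient that allows the $\kappa'$ term to remain below $1$ uniformly in $t$, while the term involving $C_{\kappa'}$ is absorbed by letting $|t|\to\infty$.
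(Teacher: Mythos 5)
Your proof is correct and follows essentially the same route as the paper: both take $z$ purely imaginary with large modulus, use the resolvent bounds $\|(A-it)^{-1}\|\leq 1/|t|$ and $\|A(A-it)^{-1}\|\leq 1$ coming from selfadjointness, and let $|t|\to\infty$ to absorb the constant term while the relative bound keeps the leading coefficient below $1$. The only cosmetic difference is that the paper squares the estimate before bounding, whereas you work directly with the linear inequality.
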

\begin{proof}
 By hypothesis, there exists $0\leq \kappa<1$ and $C_\kappa\geq 0$ such that for all $x\in \domain{A}$, we have
 \[
    \|Bx\|\leq \kappa \|Ax\|+ C_\kappa \|x\|.
 \]
 Next, as for any $r\in \mathbb{R}\setminus\{0\}$ by the self-adjointness of $A$, we infer
 \[
   \|(A-ir)^{-1}\|\leq \frac{1}{|r|}.
 \]
 Thus, we get for all $x\in H$ and $r\in \mathbb{R}\setminus\{0\}$,
 \begin{align*}
    \|B(A-ir)^{-1}x\|^2 & \leq \left(\kappa \|A(A-ir)^{-1}x\|+ C_\kappa \|x\|\right)^2\\
    \\ & \leq \left(\kappa^2 \|A(A-ir)^{-1}x\|^2+2\kappa C_\kappa\|A(A-ir)^{-1}x\|\|(A-ir)^{-1}x\|+ C_\kappa^2 \|(A-ir)^{-1}x\|^2\right)
    \\ & \leq \kappa^2\|x\|^2+ 2\kappa C_\kappa\frac{1}{|r|} \|x\|^2+C_\kappa^2\frac{1}{r^2}\|x\|^2
    \\ & \leq \left(\kappa^2+2\kappa C_\kappa\frac{1}{|r|}+C_\kappa^2\frac{1}{r^2}\right) \|x\|^2,
 \end{align*}
where we used the selfadjointness of $A$ to estimate $\|A(A-ir)^{-1}\|\leq 1$.
\end{proof}

Next, we prove Lemma \ref{lem:form_for_adjoint}. The proof of which has been kindly communicated to the authors by Rainer Picard.

\begin{proof}[Lemma \ref{lem:form_for_adjoint}]
  By Lemma \ref{lem:B_res_small}, we find $z\in \rho(A)$ such that $\|B^*(A-z)^{-1}\|<1$. Next, we observe that 
  \[
     \overline{(A-z^*)^{-1}B}=(B^*(A-z)^{-1})^*.
  \]
 In fact, the equality being clear on the domain of $B$, so the equality is plain since $B$ is densely defined and the operator on the right-hand side is continuous. In particular, we infer that $\|\overline{(A+\mu)^{-1}B}\|<1$ for $\mu\coloneqq -z^*$. 
  
  For proving the claim of the lemma, we take $x\in \domain{(A+B+\mu)^*}$. Then for all $y\in \domain{A+B+\mu}$, the equality 
  \[
     \langle (A+B+\mu)y,x\rangle = \langle y,(A+B+\mu)^*x\rangle
  \]
  holds true. For $y\in \domain{A+B+\mu}$ putting $u_y\coloneqq (1+\overline{(A+\mu)^{-1}B})y$, we compute
  \[
    (A+B+\mu)y = (A+\mu)y+By = (A+\mu)(1+(A+\mu)^{-1}B)y= (A+\mu)u_y.
  \]
  Hence, 
  \[
   \langle (A+\mu)u_y,x\rangle = \langle (1+\overline{(A+\mu)^{-1}B)}^{-1} u_y, (A+B+\mu)^*x \rangle,
  \]
   or, expressed differently, for all $w\in \range{(1+(A+\mu)^{-1}B)|_{\domain{A}}}$ we infer
  \[
   \langle (A+\mu)w,x\rangle = \langle (1+\overline{(A+\mu)^{-1}B)}^{-1} w, (A+B+\mu)^*x \rangle.
  \]
  Next, observe that $\range{(1+(A+\mu)^{-1}B)|_{\domain{A}}}$ is dense in $H$, since $\domain{A}$ is dense in $H$ and $(1+\overline{(A+\mu)^{-1}B})$ is an isomorphism. Therefore, the continuous extension of the functional 
  \[
     \range{(1+(A+\mu)^{-1}B)|_{\domain{A}}}\ni w\mapsto \langle (1+\overline{(A+\mu)^{-1}B)}^{-1} w, (A+B+\mu)^*x \rangle
  \]
  defines an element of $\domain{A^*}$. Hence, $x\in \domain{(A+\mu)^*}=\domain{A}$ and 
  \[
     (A+\mu)^*x=(1+(B^*((A+\mu)^*)^{-1}))^{-1} (A+B+\mu)^*x \rangle,
  \]
  or
  \[
     (A+B)^*x+\mu^*x=(A+B+\mu)^*x= (1+(B^*((A+\mu)^*)^{-1}))(A+\mu)^*x= (A+\mu^*+B^*)x,
  \]
  which yields the claim.  
\end{proof}

We finally come to the main result of this subsection. We recall that an operator $A\colon \domain{A}\subseteq H\to H$ 
is called \emph{non-negative}, if for all $\phi\in \domain{A}$, we have that $\langle A\phi,\phi\rangle\geq 0$.

\begin{theorem}\label{thm:sol_thy} 
   Let $H$ be a Hilbert space, $A\colon \domain{A}\subseteq H\to H$ be selfadjoint and non-negative. 
   Assume that $B\colon \domain{B}\subseteq H\to H$ is linear, $B$ and $B^*$ being $A$-bounded with $A$-bound $\kappa<1$. Assume there exists $c>0$ such that for all $\phi\in \domain{A}$ we have
   \[
      \Re\langle B\phi,\phi\rangle_H\geq c\langle\phi,\phi\rangle_H\text{ and }\Re\langle B^*\phi,\phi\rangle_H\geq c\langle\phi,\phi\rangle_H.
   \]
   Then the operator $A+B$ is continuously invertible with $\|(A+B)^{-1}\|\leq 1/c$. 
\end{theorem}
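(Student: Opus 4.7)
The plan is to combine the two earlier lemmas with the two coercivity estimates to deduce bijectivity of $A+B$ via the classical argument: a closed, densely defined operator is boundedly invertible on all of $H$ once it is bounded below and its adjoint is injective (equivalently, bounded below as well).

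First I would use Lemma \ref{lem:rel_contr_closed_sum} (applicable since $B$ is $A$-bounded with $A$-bound $\kappa<1$) to conclude that $A+B$ is closed with $\domain{A+B}=\domain{A}$. Since $A$ is selfadjoint it is densely defined, hence so is $A+B$. Next, using non-negativity of $A$, the coercivity of $B$, and Cauchy--Schwarz, I would derive the a priori lower bound
\[
\|(A+B)\phi\|\,\|\phi\|\ge \Re\langle (A+B)\phi,\phi\rangle = \Re\langle A\phi,\phi\rangle+\Re\langle B\phi,\phi\rangle \ge c\|\phi\|^2
\]
for all $\phi\in\domain{A}$, so that $\|(A+B)\phi\|\ge c\|\phi\|$. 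Injectivity of $A+B$ follows immediately, and combined with the closedness established in the first step this yields that $\range{A+B}$ is closed (a Cauchy sequence $\phi_n$ is produced from any convergent image sequence).

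To upgrade this to surjectivity, I would invoke Lemma \ref{lem:form_for_adjoint} (whose hypothesis $B^*$ being $A$-bounded with $A$-bound $<1$ is part of the assumption) to obtain the identity $(A+B)^*=A+B^*$. Repeating the Cauchy--Schwarz computation with $B^*$ in place of $B$ and using the second coercivity assumption yields $\|(A+B^*)\phi\|\ge c\|\phi\|$, and in particular $\ker(A+B^*)=\{0\}$. Since the range of a closed operator satisfies $\range{A+B}^\perp=\ker((A+B)^*)$, the closedness of the range together with this triviality of the kernel of the adjoint forces $\range{A+B}=H$. Combining bijectivity with the lower bound $\|(A+B)\phi\|\ge c\|\phi\|$ gives $\|(A+B)^{-1}\|\le 1/c$.

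The argument is essentially a packaging of the preceding two lemmas, so the main work has already been done. The only point requiring a moment of care is verifying that Lemma \ref{lem:form_for_adjoint} genuinely applies; but this is immediate from the hypotheses. The symmetric coercivity assumption on both $B$ and $B^*$ is exactly what is needed to close the duality argument, and the selfadjointness of $A$ enters twice: once to give dense domain, and once through Lemma \ref{lem:form_for_adjoint} to compute the adjoint of the sum.
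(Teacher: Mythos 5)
Your proof is correct and follows essentially the same route as the paper's: closedness and domain identification via Lemma \ref{lem:rel_contr_closed_sum}, the coercivity estimate to get the lower bound $\|(A+B)\phi\|\geq c\|\phi\|$ and closedness of the range, the adjoint identity $(A+B)^*=A+B^*$ from Lemma \ref{lem:form_for_adjoint}, and injectivity of the adjoint combined with the orthogonal decomposition $H=\mathcal{N}((A+B)^*)\oplus\overline{\range{A+B}}$ to conclude surjectivity. No gaps.
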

\begin{proof}
   By Lemma \ref{lem:rel_contr_closed_sum}, the operator $A+B$ is closed with $\domain{A+B}=\domain{A}$. 
   The same is true for the operator $A+B^*$.
   Next, for $\phi\in \domain{A}$ using the non-negativity of $A$, we estimate
   \begin{equation}\label{eq:inesol}
      \Re \langle (A+B)\phi,\phi\rangle \geq \Re\langle B\phi,\phi\rangle \geq c\langle\phi,\phi\rangle_H.
   \end{equation}
   The latter inequality \eqref{eq:inesol} implies that $A+B$ is invertible on its range $\range{A+B}\subseteq H$ 
   with Lipschitz constant $1/c$. We emphasize that the closedness of $A+B$ together with \eqref{eq:inesol} 
   also implies that $\range{A+B}\subseteq H$ is closed. From Lemma \ref{lem:form_for_adjoint}, we deduce that $(A+B)^*=A+B^*$. 
   Hence, from
   \[
      \Re \langle (A+B)^*\phi,\phi\rangle = \Re \langle (A+B^*)\phi,\phi\rangle \geq \Re\langle B^*\phi,\phi\rangle \geq c\langle\phi,\phi\rangle_H\quad(\phi\in \domain{(A+B)^*}=\domain{A})
   \]
   it follows that $(A+B)^*$ is one-to-one. The decomposition $H=\mathcal{N}((A+B)^*)\oplus \overline{\range{A+B}}$ thus yields that $A+B$ is onto, as $\range{A+B}$ is closed.
\end{proof}

\subsection{The fourth-order problem}\label{ssec:4th}

In this section, we provide the well-posedness theorem for the fourth-order problem under consideration, 
see \eqref{eq:fourthorder}. For this, we start out with the problem with $\textbf{b}$ and $c$ formally set to zero. 
We introduce some operators from vector calculus in order to formulate the fourth-order problem in a proper 
functional analytic framework.
\begin{defi} 
   Let $\Omega\subseteq \mathbb{R}^n$ open. Then define
   \begin{alignat*}{3}
       \grad_c &\colon C_c^\infty(\Omega)\subseteq L^2 (\Omega)&\to& L^2 (\Omega),\quad&
                                                         \phi&\mapsto \left(\partial_j\phi\right)_{j\in\{1,\ldots,n\}}\\
       \Div_c  &\colon C_c^\infty(\Omega)^n\subseteq L^2 (\Omega)^n&\to& L^2 (\Omega),&
                                                          (\phi_j)_{j\in\{1,\ldots,n\}}&\mapsto \sum_{j=1}^n\partial_j\phi_j\\
       \Delta_c&\colon C_c^\infty(\Omega)\subseteq L^2 (\Omega)&\to& L^2 (\Omega),&
                                                          \phi&\mapsto \sum_{j=1}^n\partial_j^2\phi.
   \end{alignat*}
   Moreover, we set
   \[
    \grad\coloneqq -\Div_c^*,\quad \Div\coloneqq-\grad_c^*,\quad\Delta\coloneqq \Delta_c^*
   \]
   as well as
   \[
    \interior\grad\coloneqq \overline{\grad}_c,\quad \interior\Div\coloneqq\overline{\Div}_c,\quad\interior\Delta\coloneqq \overline{\Delta}_c.
   \] 
\end{defi}

In order to relate the operators just introduced to the equation under consideration, 
we investigate the domain of $\interior{\Delta}$ a bit further:

\begin{theorem}\label{thm:ass_of_bdy_cdt} 
   Let $\Omega=(0,1)^2$. Then $\interior{\Delta}=\interior{\Div}\,\interior{\grad}$, 
   in other words, $\phi\in \domain{\interior{\Delta}}$ satisfies both homogeneous 
   Dirichlet and Neumann boundary conditions. 
\end{theorem}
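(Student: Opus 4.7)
The plan is to establish the equality $\interior{\Delta} = \interior{\Div}\,\interior{\grad}$ by proving both inclusions. Since $\Delta_c = \Div_c\,\grad_c$ on $C_c^\infty(\Omega)$ and the composition $\interior{\Div}\,\interior{\grad}$ is a closed extension of $\Delta_c$, the inclusion $\interior{\Delta} \subseteq \interior{\Div}\,\interior{\grad}$ should be the mild direction, while the reverse inclusion will require elliptic regularity on the unit square.

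For the inclusion $\interior{\Delta}\subseteq \interior{\Div}\,\interior{\grad}$, I take $\phi\in\domain{\interior{\Delta}}$ together with an approximating sequence $(\phi_n)\subseteq C_c^\infty(\Omega)$ satisfying $\phi_n\to\phi$ and $\Delta_c\phi_n\to\interior{\Delta}\phi$ in $L^2$. Integration by parts on compactly supported smooth functions yields
\[
  \|\grad_c\phi_n-\grad_c\phi_m\|_{L^2}^2 = -\langle\phi_n-\phi_m,\Delta_c(\phi_n-\phi_m)\rangle_{L^2},
\]
and Cauchy--Schwarz then forces $(\grad_c\phi_n)$ to be Cauchy in $L^2(\Omega)^2$. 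By definition of $\interior{\grad}=\overline{\grad_c}$, its limit must be $\interior{\grad}\phi$. Since $\Div_c\grad_c\phi_n=\Delta_c\phi_n\to\interior{\Delta}\phi$ as well, the definition of $\interior{\Div}=\overline{\Div_c}$ forces $\interior{\grad}\phi\in\domain{\interior{\Div}}$ with $\interior{\Div}\,\interior{\grad}\phi=\interior{\Delta}\phi$.

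For the reverse inclusion, I take $\phi\in\domain{\interior{\grad}}$ with $v\coloneqq\interior{\grad}\phi\in\domain{\interior{\Div}}$. The first condition places $\phi$ in $H_0^1(\Omega)$; the second says that $v$ lies in the closure of $C_c^\infty(\Omega)^2$ in $H(\Div,\Omega)$, which means its normal trace vanishes on $\partial\Omega$, i.e. $\partial_{\boldn}\phi=0$ in the trace sense. Because $\Delta\phi=\Div v\in L^2(\Omega)$ and $\Omega=(0,1)^2$ is a convex polygon, elliptic regularity for the Dirichlet problem yields $\phi\in H^2(\Omega)$. Along each edge of $\partial\Omega$, the identity $\phi=0$ forces the tangential derivative to vanish, which combined with $\partial_{\boldn}\phi=0$ gives $\nabla\phi=0$ on $\partial\Omega$; I then conclude $\phi\in H_0^2(\Omega)$. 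Density of $C_c^\infty(\Omega)$ in $H_0^2(\Omega)$ produces $\phi_n\to\phi$ in $H^2$, hence $\Delta_c\phi_n\to\Delta\phi$ in $L^2$, and thus $\phi\in\domain{\interior{\Delta}}$.

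The main obstacle is the elliptic regularity step: $H^2$-regularity of the Dirichlet Laplacian on the non-smooth domain $(0,1)^2$. For a convex polygon this is classical (Grisvard), and the introduction of the paper already invokes the related regularity result of Blum--Rannacher. A minor subtlety is that the passage from $\phi\in H^2\cap H_0^1$ with $\partial_{\boldn}\phi=0$ to $\phi\in H_0^2(\Omega)$ needs some care at the corners of the square, but once $\nabla\phi$ is known to vanish along each edge in the trace sense this follows from standard extension and approximation arguments.
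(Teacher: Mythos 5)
Your proof is correct, but the harder half follows a genuinely different route from the paper's. The inclusion $\interior{\Delta}\subseteq\interior{\Div}\,\interior{\grad}$ you obtain from the identity $\|\grad_c(\phi_n-\phi_m)\|^2=-\langle \phi_n-\phi_m,\Delta_c(\phi_n-\phi_m)\rangle$; this is exactly the computation in the paper, which applies it to an arbitrary graph-convergent sequence in $\domain{\interior{\Div}\,\interior{\grad}}$ to conclude that the composition is closed, and then deduces the theorem from $\interior{\Delta}=\overline{\Delta_c}=\overline{\interior{\Div}\,\interior{\grad}}$. For the reverse inclusion $\interior{\Div}\,\interior{\grad}\subseteq\interior{\Delta}$ you leave the soft operator-theoretic level entirely: you use the trace characterization of $\domain{\interior{\Div}}$, the $H^2$-regularity of the Dirichlet Laplacian on the convex polygon $(0,1)^2$, the identification $H_0^2(\Omega)=\{u\in H^2(\Omega):u=\partial_{\boldn}u=0\text{ on }\partial\Omega\}$, and the density of $C_c^\infty(\Omega)$ in $H_0^2(\Omega)$. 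The cost is a reliance on nontrivial (if classical) regularity and trace theory on a non-smooth domain, which the paper's argument avoids. What it buys is worth noting: the paper's opening assertion that $\interior{\Delta}=\overline{\interior{\Div}\,\interior{\grad}}$ amounts to the claim that $C_c^\infty(\Omega)$ is a core for $\interior{\Div}\,\interior{\grad}$, and that is not a purely formal consequence of the two operators agreeing on $C_c^\infty(\Omega)$; your elliptic-regularity step is precisely a proof of that core property, so in this respect your version is the more explicit of the two. You also obtain $\domain{\interior{\Div}\,\interior{\grad}}\subseteq H^2(\Omega)$ as a by-product, consistent with the paper's later remark on the regularity of $\domain{\interior{\Delta}}$-functions. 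The corner subtlety you flag (passing from $u=\partial_{\boldn}u=0$ on every edge to $u\in H_0^2(\Omega)$) is real but is indeed handled by extension by zero followed by inward translation and mollification on the star-shaped square.
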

\begin{proof}
  From $\Delta|_{C_c^\infty(\Omega)}=\interior{\Div}\,\interior{\grad}|_{C_c^\infty(\Omega)}$ it follows that 
  $\interior{\Delta}=\overline{\interior{\Div}\,\interior{\grad}}$. Hence, it suffices to show that 
  $\interior{\Div}\,\interior{\grad}$ is closed. For this, let $(u_n)_n$ in $\domain{\interior{\Div}\,\interior{\grad}}$ 
  with $u_n\to f$ and $\interior{\Div}\,\interior{\grad} u_n\to g$ in $L^2(\Omega)$ as $n\to\infty$ for some 
  $f,g\in L^2(\Omega)$. Observe that for $n,m\in \mathbb{N}$, we have
   \[
      \|\interior{\grad}(u_n-u_m)\|^2=-\langle (u_n-u_m),\interior{\Div}\,\interior{\grad}(u_n-u_m)\rangle \to 0\quad (n,m\to\infty).
   \]
   Thus, $(u_n)_n$ is a Cauchy-sequence in $\domain{\interior{\grad}}$ endowed with the graph norm. 
   Thus, $f\in \domain{\interior{\grad}}$ and $\interior{\grad}f=\lim\limits_{n\to\infty} \interior\grad u_n$, 
   by the closedness of $\interior{\grad}$. Moreover, as both $(\interior{\grad} u_n)_n$ and 
   $(\interior{\Div}\,\interior{\grad}u_n)_n$ are convergent in $L^2(\Omega)^n$ and $L^2(\Omega)$, 
   respectively, we infer, by the closedness of $\interior\Div$ that $\interior{\grad}f\in \domain{\interior{\Div}}$ 
   and $\interior{\Div}\,\interior{\grad}f=g$, which proves the assertion.
\end{proof}

\begin{remark}[On the domain of $\interior{\Div}\,\interior{\grad}$] 
   (a) The boundary of $\Omega=(0,1)^2$ is Lipschitz. Hence, by a standard regularity result, we have
      \[
         \domain{\interior{\grad}}=H_0^1(\Omega)=\{u\in H^1(\Omega); u=0 \text{ on }\partial\Omega\}.
      \]
      Moreover, 
      \[
         \domain{\interior{\Div}}=\{ \boldu\in \domain{\Div};  \boldu\cdot \boldn=0 \text{ on }\partial\Omega\},
      \]
      where $\boldu \cdot \boldn$ is the normal component of $u$. Hence, the domain of $\interior{\Div}\,\interior{\grad}$ can be characterised by
      \[
        \domain{\interior{\Div}\,\interior{\grad}}=\{ u\in \domain{{\Div}\,{\grad}}; u=0, \partial_\boldn u=0 \text{ on }\partial\Omega\},
      \]
      where $\partial_\boldn u$ is the normal derivative of $u$. 

   (b) With the help of (a), the boundary conditions in equation \eqref{eq:fourthorder} may be interpreted equivalently in two ways. 
      The first way is that the boundary conditions may be understood in the sense of traces. Secondly, for a solution $\psi\in L^2(\Omega)$ 
      of \eqref{eq:fourthorder}, any summand should belong to $L^2(\Omega)$ and the equation (not including the boundary conditions) 
      should hold in a distributional sense. The boundary conditions are realised as the additional condition of $\psi$ belonging 
      to the domain of $\interior{\Delta}$. 
\end{remark}

\begin{remark}[On the regularity of $\domain{\interior{\Delta}}$-functions]\label{rem:reg_delta_int} 
   Note that Theorem \ref{thm:ass_of_bdy_cdt} in particular implies that $\interior{\Delta}\subseteq \Div\interior{\grad}\eqqcolon \Delta_{D}$, 
   where the index $D$ stands for homogeneous Dirichlet boundary conditions. So, $\interior{\Delta}$ is a restriction of the Dirichlet Laplacian. 
   It is known that, on convex domains, the Dirichlet Laplacian admits optimal regularity, that is, 
   $\domain{\Delta_D}=H^2(\Omega)\cap H_0^1(\Omega)$. 
   So, $\domain{\interior{\Delta}}=H_0^2(\Omega)\coloneqq \{ u\in H^2(\Omega); u=0, \partial_\boldn u =0 \text{ on }\partial\Omega\}$. 
\end{remark}

\begin{remark}\label{rem:ddstar_ci} 
   (a) It is a consequence of the Poincar\'e inequality that the operator $-\interior{\Delta}$ is strictly positive. 
      Indeed, we have for all $\phi\in \domain{\interior{\Delta}}$ that
      \[
         -\langle \interior{\Delta}\phi,\phi\rangle
            =-\langle \interior{\Div}\,\interior{\grad}\phi,\phi\rangle 
            =\langle \interior{\grad}\phi,\interior{\grad}\phi\rangle\geq c_P^2\|\phi\|_{L^2}^2
      \]
      for $c_P>0$ being the Poincar\'e constant on the square $\Omega=(0,1)^2$.  

   (b) The inequality in (a) shows that $\interior{\Delta}$ is one-to-one and has closed range. It is standard 
      (see, e.g.,~\cite[Corollary 2.5]{TW14}) that the range of $\interior{\Delta}^*=\Delta$ is closed itself. 
      As $\interior{\Delta}$ is one-to-one, the operator $\Delta$ is onto. Moreover, by \cite[Theorem 2.6]{TW14}, 
      the operator $|\Delta\interior{\Delta}|=\sqrt{\Delta\interior{\Delta}}$ is continuously invertible in $L^2(\Omega)$. 
      Hence, so is $|\Delta\interior{\Delta}|^2=\Delta\interior{\Delta}$.
\end{remark}

Next, we cite a crucial result for our approach, which asserts that -- similar to the Dirichlet Laplacian (cf.~Remark~\ref{rem:reg_delta_int}) -- the operator 
$\Delta\interior{\Delta}$ admits optimal regularity on $\Omega=(0,1)^2$, that is, $\domain{\Delta\interior{\Delta}}=H^4(\Omega)\cap H_0^2(\Omega)$.

\begin{theorem}[\!\!{{\cite[Theorem 2]{BR80}}}]\label{thm:BRT2} 
   The operator 
   \[
      \Delta\interior{\Delta} \colon \domain{\Delta\interior{\Delta}}\subseteq L^2(\Omega)\to L^2(\Omega)
   \]
   is continuously invertible and there exists $d>0$ such that for all $\phi\in \domain{\Delta\interior{\Delta}}$ we have the estimate
   \[
      \| \phi\|_{H^4(\Omega)}\leq d\|\Delta\interior{\Delta} \phi\|_{L^2(\Omega)}.
   \] 
\end{theorem}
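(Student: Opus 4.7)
The continuous invertibility half of the statement has essentially already been obtained: Remark~\ref{rem:ddstar_ci}(b) asserts that $\Delta\interior{\Delta}$ is continuously invertible as an operator in $L^2(\Omega)$. The task therefore reduces to proving the inclusion $\domain{\Delta\interior{\Delta}}\subseteq H^4(\Omega)$ together with the announced $H^4$-estimate. Once the inclusion is established, the quantitative bound follows from the closed-graph theorem: $(\Delta\interior{\Delta})^{-1}\colon L^2(\Omega)\to H^4(\Omega)$ is an everywhere-defined linear map whose graph is closed, since $L^2$-convergence of $\Delta\interior{\Delta}\phi_n\to f$ forces $\phi_n\to (\Delta\interior{\Delta})^{-1}f$ in $L^2$ by the already-known invertibility, and $H^4$-convergence refines this.

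The plan for the regularity statement is a two-stage bootstrap, exploiting the convexity of $\Omega=(0,1)^2$. Fix $f\in L^2(\Omega)$ and put $u\coloneqq (\Delta\interior{\Delta})^{-1}f$. By Remark~\ref{rem:reg_delta_int} we already know $u\in H_0^2(\Omega)$, so $w\coloneqq \interior{\Delta}u\in L^2(\Omega)$ is well-defined and satisfies $\Delta w=f$ in the distributional sense. If I can promote $w$ to $H^2(\Omega)$, then the equation $\Delta u=w$ becomes a Dirichlet problem for the Laplacian with data in $H^2$, and a further application of the $H^2$-regularity of the Dirichlet Laplacian on convex polygons (again Remark~\ref{rem:reg_delta_int}) upgrades $u$ from $H_0^2(\Omega)$ to $H^4(\Omega)$, completing the bootstrap.

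The substantial step is therefore the promotion $w\in L^2\rightsquigarrow w\in H^2$. The difficulty is that $w$ does not inherit an explicit boundary condition from the clamped conditions on $u$, so standard Dirichlet or Neumann $H^2$-regularity cannot be invoked directly. One natural strategy is to encode the clamped conditions $u=\partial_\boldn u=0$ indirectly by reformulating the problem as a Stokes-type system for $\grad u$, which vanishes on $\partial\Omega$ (its normal component because $\partial_\boldn u=0$, its tangential component because $u\equiv 0$ on $\partial\Omega$ kills all tangential derivatives), and then to invoke the known $H^2$-regularity for Stokes on convex polygons. The route actually taken in \cite{BR80} is a Kondratiev-type asymptotic analysis at each vertex of the square: the candidate singular exponents $\lambda$ of the clamped biharmonic at an opening angle $\pi/2$ all satisfy $\operatorname{Re}\lambda>2$, so no singular mode is activated in the expansion of $u$ near any corner, and $u$ is genuinely $H^4$ up to the boundary.

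The principal obstacle is exactly this corner analysis. Biharmonic regularity is highly sensitive to the opening angles at vertices, and the verification that at $\pi/2$ all Kondratiev exponents clear the $H^4$-threshold rests on a non-trivial transcendental equation governing the characteristic exponents. This geometric input is what makes \cite[Theorem 2]{BR80} substantially deeper than the abstract invertibility derived in the preceding remarks, and why the authors import it as a black box rather than re-proving it here.
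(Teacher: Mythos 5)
Your proposal matches the paper's own proof: the continuous invertibility is taken from Remark~\ref{rem:ddstar_ci}(b), and the $H^4$-regularity estimate is imported from \cite[Theorem 2]{BR80} rather than re-derived, exactly as you ultimately do after correctly identifying the corner analysis as the part that cannot be reproduced from the paper's abstract machinery. Your closed-graph observation (that the qualitative inclusion $\domain{\Delta\interior{\Delta}}\subseteq H^4(\Omega)$ already implies the quantitative bound) is a sound, if unstated in the paper, supplement.
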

\begin{proof}
    The continuous invertibility has been established in Remark \ref{rem:ddstar_ci} (b). 
    The regularity result is one of the statements in \cite[Theorem 2]{BR80}.
\end{proof}
For the proof of the continuous invertibility of the operator
\begin{equation}\label{eq:def_ofG}
    G\coloneqq \varepsilon\Delta\interior{\Delta}+(\textbf{b}\cdot\nabla)\interior{\Delta}-c\interior{\Delta}
\end{equation}
in $L^2(\Omega)$ for $\textbf{b}\in \mathbb{R}^2$ and $c>0$, we will employ the abstract results found in the previous section.

The next result shows that $\varepsilon\Delta\interior{\Delta}$ is in fact the leading term in the operator $G$. 
We briefly recall that any operator of the form $A^*A$ is selfadjoint and non-negative, 
where $A$ is a densely defined, closed linear operator in some Hilbert space.

\begin{lemma}\label{lem:Hinfbdd} 
   The operator  
   \[
       T \colon \domain{\Delta\interior{\Delta}}\subseteq L^2(\Omega) \to L^2(\Omega),\quad
       u  \mapsto ((\textbf{b}\cdot\nabla)\interior{\Delta}-c\interior{\Delta})u
   \]
   is infinitesimally $\Delta\interior{\Delta}$-bounded.
\end{lemma}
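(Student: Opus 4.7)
The plan is to reduce the claim to Remark \ref{rem:inf_bdd}(c), applied with $k=4$, $m=3$, and $A=\Delta\interior{\Delta}$. Two prerequisites need to be verified: that $A$ is closed with $\domain{A}\subseteq H^4(\Omega)$, and that a suitable extension of $T$ to $H^3(\Omega)$ is a bounded operator into $L^2(\Omega)$.

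For the first point, both properties are built into Theorem \ref{thm:BRT2}: continuous invertibility of $\Delta\interior{\Delta}$ on $L^2(\Omega)$ (already noted in Remark \ref{rem:ddstar_ci}(b)) forces its closedness, and the regularity estimate $\|\phi\|_{H^4}\leq d\|\Delta\interior{\Delta}\phi\|_{L^2}$ gives the inclusion $\domain{\Delta\interior{\Delta}}\subseteq H^4(\Omega)$.

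For the second point, I would introduce the auxiliary operator
\[
  \tilde T\colon H^3(\Omega)\to L^2(\Omega),\quad u\mapsto (\boldb\cdot \nabla)\Delta u - c\Delta u,
\]
where $\nabla$ and $\Delta$ are understood in the distributional sense. Boundedness from $H^3(\Omega)$ into $L^2(\Omega)$ is immediate, since only partial derivatives of order at most three of $u$ appear, each controlled by the $H^3$-norm. The decisive check is that $\tilde T$ restricted to $\domain{\Delta\interior{\Delta}}$ coincides with $T$. This follows from Theorem \ref{thm:ass_of_bdy_cdt} together with Remark \ref{rem:reg_delta_int}: any $u\in \domain{\Delta\interior{\Delta}}$ lies in $H^4(\Omega)\cap H_0^2(\Omega)$, and on such functions the abstract operator $\interior{\Delta}$ agrees with the classical Laplacian $\Delta$, whence $Tu = \tilde T u$.

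Once these identifications are in place, Remark \ref{rem:inf_bdd}(c) applies verbatim to $A=\Delta\interior{\Delta}$ and $B=\tilde T$, yielding the asserted infinitesimal $\Delta\interior{\Delta}$-boundedness of $T$. There is no serious obstacle here; the only point demanding a moment's care is the distributional identification $\interior{\Delta}u=\Delta u$ on $\domain{\Delta\interior{\Delta}}$, which is what makes the abstract $T$ the restriction of a differential operator bounded out of $H^3(\Omega)$ and so permits invoking the remark.
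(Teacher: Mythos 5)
Your argument is correct and follows essentially the same route as the paper: both reduce the claim to Remark \ref{rem:inf_bdd}(c) via the continuity of $(\boldb\cdot\nabla)\Delta-c\Delta\colon H^3(\Omega)\to L^2(\Omega)$, the inclusion $\domain{\Delta\interior{\Delta}}\subseteq H^4(\Omega)$ from Theorem \ref{thm:BRT2}, and the observation that $T$ coincides with this differential operator on $\domain{\Delta\interior{\Delta}}$. The only cosmetic difference is that the paper obtains closedness of $\Delta\interior{\Delta}$ from its selfadjointness ($\Delta\interior{\Delta}=\interior{\Delta}^*\interior{\Delta}$), while you deduce it from continuous invertibility; both are valid.
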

\begin{proof}
   By Theorem \ref{thm:BRT2}, $\domain{\Delta\interior{\Delta}}= H^4(\Omega)\cap H_0^2(\Omega)\subseteq H^4(\Omega)$. 
   Next, the operator 
   \[
      (\textbf{b}\cdot\nabla)\Delta-c\Delta \colon H^3(\Omega)\to L^2(\Omega)
   \]
   is continuous. In addition, note that the operator $\Delta\interior{\Delta}=\interior{\Delta}^*\interior{\Delta}$ 
   is selfadjoint, hence, closed. Thus, by Remark \ref{rem:inf_bdd}(c), considered in $L^2(\Omega)$, the operator 
   $(\textbf{b}\cdot\nabla)\Delta-c\Delta$ is infinitesimally $\Delta\interior{\Delta}$-bounded. 
   We conclude with the observation that on $\domain{\Delta\interior{\Delta}}$ the operator $T$ coincides 
   with $(\textbf{b}\cdot\nabla)\Delta-c\Delta$.
\end{proof}

\begin{lemma}\label{lem:Hisdos} 
   Let $T$ be given as in Lemma \ref{lem:Hinfbdd}. Then the inclusion $\domain{T}\subseteq \domain{T^*}$ holds.
\end{lemma}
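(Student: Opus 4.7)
The plan is to identify an explicit candidate for $T^{*}\phi$ when $\phi\in\domain{T}$ and verify, via integration by parts, that this candidate represents the action of $T^{*}$.

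First, I would recall from Theorem~\ref{thm:BRT2} and Remark~\ref{rem:reg_delta_int} that
\[
  \domain{T}=\domain{\Delta\interior{\Delta}}=H^{4}(\Omega)\cap H_{0}^{2}(\Omega),
\]
so every element $\phi\in\domain{T}$ has an $L^{2}$-Laplacian as well as an $L^{2}$-function $(\boldsymbol{b}\cdot\nabla)\Delta\phi$, and in addition satisfies both $\phi=0$ and $\partial_{\boldn}\phi=0$ on $\partial\Omega$. Hence the natural candidate for the adjoint action is
\[
  w:=-(\boldsymbol{b}\cdot\nabla)\Delta\phi-c\Delta\phi\in L^{2}(\Omega).
\]
The goal is to show $\langle T\psi,\phi\rangle=\langle\psi,w\rangle$ for every $\psi\in\domain{T}$; this will yield $\phi\in\domain{T^{*}}$ with $T^{*}\phi=w$.

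Second, fix $\psi,\phi\in H^{4}(\Omega)\cap H_{0}^{2}(\Omega)$ and perform the integration by parts in two stages. For the first-order transport term, since $\phi$ vanishes on $\partial\Omega$, the single integration by parts
\[
  \int_{\Omega}\bigl((\boldsymbol{b}\cdot\nabla)\Delta\psi\bigr)\overline{\phi}
  = -\int_{\Omega}(\Delta\psi)\,(\boldsymbol{b}\cdot\nabla)\overline{\phi}
\]
has no boundary contribution. For the remaining Laplacian on $\psi$, I would apply Green's second formula with test function $g:=(\boldsymbol{b}\cdot\nabla)\overline{\phi}\in H^{2}(\Omega)$ (respectively $g:=\overline{\phi}$ for the $-c\Delta\psi$ summand); because $\psi\in H_{0}^{2}(\Omega)$ satisfies $\psi=\partial_{\boldn}\psi=0$ on $\partial\Omega$, both boundary integrals $\int_{\partial\Omega}(\partial_{\boldn}\psi)g-\psi(\partial_{\boldn}g)\,dS$ vanish, giving
\[
  \int_{\Omega}(\Delta\psi)\,g=\int_{\Omega}\psi\,\Delta g.
\]
Using that $\boldsymbol{b}$ is constant so that $\Delta$ and $\boldsymbol{b}\cdot\nabla$ commute, combining both steps yields
\[
  \langle T\psi,\phi\rangle
  = \int_{\Omega}\psi\cdot\overline{\bigl(-(\boldsymbol{b}\cdot\nabla)\Delta\phi-c\Delta\phi\bigr)}
  = \langle\psi,w\rangle,
\]
which is exactly the required identity, proving $\phi\in\domain{T^{*}}$.

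The only delicate point is to legitimise the integrations by parts and the vanishing of the boundary terms. Because both $\psi$ and $\phi$ lie in $H^{4}(\Omega)\cap H_{0}^{2}(\Omega)$, all intermediate functions (in particular $(\boldsymbol{b}\cdot\nabla)\phi\in H^{3}(\Omega)$ and $\Delta\phi\in H^{2}(\Omega)$) have enough Sobolev regularity so that Green's formulas on the Lipschitz square $\Omega=(0,1)^{2}$ apply classically; the trace conditions encoded in $H_{0}^{2}(\Omega)$ (as made explicit in the remark following Theorem~\ref{thm:ass_of_bdy_cdt}) are exactly what is needed to kill the boundary terms. Hence the only mild obstacle is bookkeeping, not analysis: once the regularity $\domain{T}=H^{4}\cap H_{0}^{2}$ is in hand, the inclusion $\domain{T}\subseteq\domain{T^{*}}$ drops out of standard Green-type identities.
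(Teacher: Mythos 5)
Your proposal is correct and follows essentially the same route as the paper: you exhibit the candidate $w=-(\boldsymbol{b}\cdot\nabla)\Delta\phi-c\Delta\phi$ and verify $\langle T\psi,\phi\rangle=\langle\psi,w\rangle$ by integration by parts, using the regularity $\domain{T}=H^{4}(\Omega)\cap H_{0}^{2}(\Omega)$ from Theorem~\ref{thm:BRT2} and the $H_{0}^{2}$-boundary conditions to kill all boundary terms. The paper performs the identical computation, merely phrased through the abstract adjoint relations $\Delta=\interior{\Delta}^{*}$ and $\interior{\partial}_{j}^{*}=-\partial_{j}$ instead of explicit Green formulas.
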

\begin{proof}
   We will use again Theorem \ref{thm:BRT2}, that is $\domain{\Delta\interior{\Delta}}= H^4(\Omega)\cap H_0^2(\Omega)$. 
   So, for $\phi,\psi\in H^4(\Omega)\cap H_0^2(\Omega)$, we compute for $j\in \{1,2\}$ 
   \begin{align*}
     \langle \partial_j \interior{\Delta}\phi,\psi\rangle& = \langle \partial_j \Delta\phi,\psi\rangle \\
                                                            & = \langle \Delta\partial_j \phi,\psi\rangle \\
							    & = \langle \partial_j \phi,\interior{\Delta}\psi\rangle \\
 							    & = \langle \interior{\partial}_j \phi,\interior{\Delta}\psi\rangle \\
 							    & = -\langle  \phi,\partial_j\interior{\Delta}\psi\rangle,
   \end{align*}
   where we denoted by $\interior{\partial}_j$ the minimal closed restriction of the distributional 
   derivative operator $\partial_j$ with respect to the $j$'th coordinate in $L^2(\Omega)$ with $C_c^\infty(\Omega)$ as a core. 
   Moreover, we have
   \[
      -\langle\interior{\Delta}\phi,\psi\rangle =\langle\interior{\grad}\phi,\interior{\grad}\psi\rangle=-\langle\phi,\interior{\Delta}\psi\rangle,
   \]
   which establishes the assertion.
\end{proof}

\begin{remark}\label{rem:expr_for_Hstar} 
   Note that in the proof of Lemma \ref{lem:Hisdos}, we also showed that for $\phi\in \domain{\Delta\interior{\Delta}}$, we have
   \[
      T^*\phi = - (\textbf{b}\cdot\nabla)\interior{\Delta}\phi-c\interior{\Delta}\phi.
   \]
   Thus, from this equality and Lemma \ref{lem:Hinfbdd} it follows that $T^*$ is infinitesimally $\Delta\interior{\Delta}$-bounded, as well.
\end{remark}

\begin{lemma}\label{lem:Hstr_pos} 
   Let $T$ be given as in Lemma \ref{lem:Hinfbdd}. Then for all $\phi\in H^4(\Omega)\cap H_0^2(\Omega)$, 
   the estimates
   \[
      \Re\langle T\phi,\phi\rangle\geq c c_P^2\langle \phi,\phi\rangle\text{ and }\Re\langle T^*\phi,\phi\rangle\geq c c_P^2\langle \phi,\phi\rangle
   \]
   hold true. Here $c_P>0$ is the constant in Poincar\'e's inequality, see also Remark \ref{rem:ddstar_ci}.
\end{lemma}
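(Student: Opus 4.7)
The plan is to split $T = T_1 + T_2$ with $T_1\phi = (\textbf{b}\cdot\nabla)\interior{\Delta}\phi$ and $T_2\phi = -c\interior{\Delta}\phi$, and to argue that the convective piece $T_1$ contributes exactly zero to the real part of $\langle T\phi,\phi\rangle$, while $T_2$ produces the claimed bound via Poincar\'e. More precisely, the term involving $T_2$ is handled by the computation already recorded in Remark~\ref{rem:ddstar_ci}(a), namely
\[
   -c\,\Re\langle \interior{\Delta}\phi,\phi\rangle = c\langle \interior{\grad}\phi,\interior{\grad}\phi\rangle \geq c\,c_P^2\langle\phi,\phi\rangle.
\]
So the content of the lemma will come down to the identity $\Re\langle T_1\phi,\phi\rangle = 0$ for $\phi\in H^4(\Omega)\cap H_0^2(\Omega)$.

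To establish this identity, I would integrate by parts twice. By Remark~\ref{rem:reg_delta_int}, $\phi\in H_0^2(\Omega)$ means $\phi=0$ and $\partial_\boldn\phi=0$ on $\partial\Omega$ in the trace sense; together with the $H^4$ regularity this forces all first-order partials of $\phi$ to vanish on $\partial\Omega$ (since $\phi=0$ on each edge already kills the tangential derivative there). For each $j\in\{1,2\}$, the first IBP in the $j$-th variable uses $\phi=0$ on $\partial\Omega$ to discard the boundary term:
\[
   \langle \partial_j \Delta\phi,\phi\rangle = -\langle \Delta\phi,\partial_j\phi\rangle.
\]
A second IBP via Green's first identity, using $\partial_\boldn\phi=0$ on $\partial\Omega$, gives
\[
   \langle \Delta\phi,\partial_j\phi\rangle = -\sum_{k=1}^2 \langle \partial_k\phi,\partial_j\partial_k\phi\rangle,
\]
so $\langle \partial_j\Delta\phi,\phi\rangle = \sum_k \langle \partial_k\phi,\partial_j\partial_k\phi\rangle$.

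Taking real parts, the pointwise identity $2\Re\bigl(\partial_k\phi\,\overline{\partial_j\partial_k\phi}\bigr) = \partial_j |\partial_k\phi|^2$ together with the divergence theorem reduces each summand to the boundary integral $\tfrac12\int_{\partial\Omega} |\partial_k\phi|^2 n_j\,dS$, which vanishes because $\partial_k\phi=0$ on $\partial\Omega$. Summing against $b_j$ yields $\Re\langle T_1\phi,\phi\rangle=0$, and combined with the Poincar\'e bound for $T_2$ this proves the first inequality. For the second inequality, Remark~\ref{rem:expr_for_Hstar} gives $T^*\phi = -(\textbf{b}\cdot\nabla)\interior{\Delta}\phi - c\interior{\Delta}\phi$, whose convective part is the negative of $T_1$ and therefore still has vanishing real part, while the $-c\interior{\Delta}$ term contributes the same Poincar\'e bound. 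The main obstacle is purely bookkeeping: making sure that both boundary conditions of $H_0^2(\Omega)$ are genuinely used, the first ($\phi=0$) to kill the boundary term in the initial IBP, and the two together (so that $\nabla\phi=0$ on $\partial\Omega$) to kill the residual boundary integrals after the second IBP.
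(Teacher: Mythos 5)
Your proposal is correct, and every step checks out: the two integrations by parts legitimately use, respectively, $\phi=0$ and $\partial_{\boldn}\phi=0$ on $\partial\Omega$, and the final divergence-theorem step uses that both conditions together force $\nabla\phi=0$ on the boundary. The route differs slightly from the paper's. The paper does not show $\Re\langle (\boldb\cdot\nabla)\interior{\Delta}\phi,\phi\rangle=0$ by an explicit pointwise computation; instead it uses the symmetrization $2\Re\langle T\phi,\phi\rangle=\Re\langle T\phi,\phi\rangle+\Re\langle T^*\phi,\phi\rangle=\Re\langle (T+T^*)\phi,\phi\rangle$ together with the formula $T^*\phi=-(\boldb\cdot\nabla)\interior{\Delta}\phi-c\interior{\Delta}\phi$ from Remark~\ref{rem:expr_for_Hstar}, so that the convective parts cancel identically and only $-2c\Re\langle\interior{\Delta}\phi,\phi\rangle$ survives, to which Remark~\ref{rem:ddstar_ci}(a) applies. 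Of course the cancellation in the paper is just a repackaging of the skew-symmetry $\langle\partial_j\interior{\Delta}\phi,\psi\rangle=-\langle\phi,\partial_j\interior{\Delta}\psi\rangle$ already established in Lemma~\ref{lem:Hisdos} by an operator-theoretic integration by parts, so the underlying mechanism is the same; what your version buys is self-containedness for the first inequality (you do not need the adjoint formula until you treat $T^*$), at the cost of redoing by hand, with explicit boundary terms, an integration by parts the paper has already abstracted away. Both inequalities come out with the same constant $cc_P^2$ either way.
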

\begin{proof} 
   Both the asserted inequalities are shown in the same way. Hence, we stick to the first inequality only. 
   We compute, using Remark \ref{rem:expr_for_Hstar}, for $\phi\in H^4(\Omega)\cap H_0^2(\Omega)$:
   \begin{align*}
      2\Re\langle T\phi,\phi\rangle & =  \Re\langle T\phi,\phi\rangle+ \Re(\langle T\phi,\phi\rangle)^*\\ 
         & =  \Re\langle T\phi,\phi\rangle+ \Re\langle \phi,T\phi\rangle\\ 
         & =  \Re\langle T\phi,\phi\rangle+ \Re\langle T^*\phi,\phi\rangle\\ 
         & = \Re\langle (\textbf{b}\cdot\nabla)\interior{\Delta}\phi-c\interior{\Delta}\phi,\phi\rangle 
            + \Re\langle - (\textbf{b}\cdot\nabla)\interior{\Delta}\phi-c\interior{\Delta}\phi,\phi\rangle\\ 
         & = -2\Re \langle c\interior{\Delta}\phi,\phi\rangle\\
         &\geq 2 c c_P^2 \langle\phi,\phi\rangle, 
   \end{align*} 
   where we also used Remark \ref{rem:ddstar_ci}(a).
\end{proof}

Finally, we are in the position to prove the well-posedness result for the operator $G$:

\begin{theorem}\label{thm:sol_thy_app}
   The operator $G$ as given in \eqref{eq:def_ofG} is continuously invertible with domain 
   $\domain{G}=H^4(\Omega)\cap H_0^2(\Omega)$. Moreover, $\|G^{-1}\|\leq 1/(c c_P^2)$, where 
   $c_P>0$ is the Poincar\'e constant given in Remark \ref{rem:ddstar_ci}.
\end{theorem}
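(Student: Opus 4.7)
My plan is to apply the abstract well-posedness result Theorem~\ref{thm:sol_thy} to the decomposition $G = A + B$, where
\[
   A \coloneqq \varepsilon \Delta\interior{\Delta} \quad\text{and}\quad B \coloneqq T = (\mathbf{b}\cdot\nabla)\interior{\Delta} - c\interior{\Delta},
\]
so that all the work reduces to verifying the four hypotheses of that theorem and then reading off the domain and the norm bound.

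First, I would check selfadjointness and non-negativity of $A$. Since $\interior{\Delta}$ is a densely defined closed operator (as the closure of $\Delta|_{C_c^\infty(\Omega)}$), the product $\interior{\Delta}^*\interior{\Delta} = \Delta\interior{\Delta}$ is selfadjoint and non-negative by the standard $A^*A$-construction; multiplication by $\varepsilon>0$ preserves both properties, and Theorem~\ref{thm:BRT2} identifies $\domain{A}=\domain{\Delta\interior{\Delta}}=H^4(\Omega)\cap H_0^2(\Omega)$.

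Next, I would verify that $B$ and $B^*$ are $A$-bounded with $A$-bound strictly less than $1$. Lemma~\ref{lem:Hinfbdd} gives that $T$ is infinitesimally $\Delta\interior{\Delta}$-bounded, and Remark~\ref{rem:expr_for_Hstar} yields the same for $T^*$. To pass from $\Delta\interior{\Delta}$-boundedness to $A$-boundedness, one just rescales: for any $\kappa'>0$ there is $C_{\kappa'}$ with $\|T\phi\|\leq \kappa'\|\Delta\interior{\Delta}\phi\|+C_{\kappa'}\|\phi\|=(\kappa'/\varepsilon)\|A\phi\|+C_{\kappa'}\|\phi\|$, and choosing $\kappa'<\varepsilon$ makes the relative bound arbitrarily small (in particular $<1$). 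The same argument applies to $T^*$.

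The coercivity condition is delivered directly by Lemma~\ref{lem:Hstr_pos}, which states $\Re\langle B\phi,\phi\rangle\geq cc_P^2\langle\phi,\phi\rangle$ and $\Re\langle B^*\phi,\phi\rangle\geq cc_P^2\langle\phi,\phi\rangle$ for $\phi\in H^4(\Omega)\cap H_0^2(\Omega)=\domain{A}$. Hence Theorem~\ref{thm:sol_thy} applies with constant $cc_P^2$ in place of $c$, giving continuous invertibility of $G=A+B$ with $\|G^{-1}\|\leq 1/(cc_P^2)$ and $\domain{G}=\domain{A}=H^4(\Omega)\cap H_0^2(\Omega)$ via Lemma~\ref{lem:rel_contr_closed_sum} and Theorem~\ref{thm:BRT2}.

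There is no real obstacle here; the only subtle point is the rescaling observation showing that infinitesimal $\Delta\interior{\Delta}$-boundedness of $T$ and $T^*$ transfers to $\varepsilon\Delta\interior{\Delta}$-boundedness with bound $<1$ uniformly, and that the coercivity constant $cc_P^2$ is independent of $\varepsilon$, yielding the $\varepsilon$-uniform estimate $\|G^{-1}\|\leq 1/(cc_P^2)$.
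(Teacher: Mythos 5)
Your proposal is correct and follows essentially the same route as the paper: apply Theorem~\ref{thm:sol_thy} with $A=\varepsilon\Delta\interior{\Delta}$ and $B=T$, citing Lemma~\ref{lem:Hinfbdd}, Remark~\ref{rem:expr_for_Hstar} and Lemma~\ref{lem:Hstr_pos} for the hypotheses. Your explicit rescaling remark (that infinitesimal $\Delta\interior{\Delta}$-boundedness yields $\varepsilon\Delta\interior{\Delta}$-boundedness with bound $<1$) is a detail the paper leaves implicit, and it is handled correctly.
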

\begin{proof}
   For the proof, we use Theorem \ref{thm:sol_thy} applied to $A=\varepsilon\Delta\interior{\Delta}$ 
   and $T=B$, where $T$ is given in Lemma \ref{lem:Hinfbdd}. We will now show that the assumptions of
   Theorem \ref{thm:sol_thy} are met. First of all note that the operator 
   $\varepsilon\Delta\interior{\Delta}=\varepsilon\interior{\Delta}^*\interior{\Delta}$ is selfadjoint 
   and non-negative. Next, the operators $T$ and $T^*$ are infinitesimally $\Delta\interior{\Delta}$-bounded by 
   Theorem \ref{lem:Hinfbdd} and Remark \ref{rem:expr_for_Hstar}. The needed inequalities are established in Lemma \ref{lem:Hstr_pos}.
\end{proof}
\subsection{A third-order problem}
We conclude Section \ref{sec:exist} with a brief summary of the results of \cite{MR2548434}, which are relevant for our analysis to be carried out later on.

\begin{defi}
   We call the problem
   \begin{equation}\label{eq:redfourthorder}
      \begin{split}
      (\boldsymbol{b} \cdot \nabla)\Delta \psi -c \Delta \psi=f \hspace{2em}&\mbox{ in }\Omega,\\
      \psi=0\hspace{2em}&\mbox{ on }\Gamma,\\
      \partial_\boldn \psi=0\hspace{2em}&\mbox{ on }\Gamma_-,
      \end{split}
   \end{equation}
   where $\Gamma_-:=\{x \in \Gamma: -\boldsymbol{b}\cdot \boldsymbol{n} <0\}$ is the inflow boundary, 
   the \emph{reduced problem} of \eqref{eq:fourthorder}.
\end{defi}
\begin{defi}
   A function $\psi$ from the class $C^{1,h}(\bar{\Omega})\cap C^3(\Omega)$ is a \emph{classical solution} 
   of the problem \eqref{eq:redfourthorder} if it satisfies the equation and its boundary conditions. 
   By $C^{1,h}(\Omega)$, we denote the space of functions which are in $C^1(\Omega)$ and whose derivatives satisfy the 
   H\"older condition for $0<h<1$.
\end{defi}
\begin{prop}
Let $f\in C^{1,h}(\bar{\Omega})$ and $c\geq0$. Then the problem \eqref{eq:redfourthorder} has a classical solution which is unique.
\end{prop}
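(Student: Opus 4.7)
The plan is to reduce \eqref{eq:redfourthorder} to a coupling between a first-order transport--reaction equation and the Dirichlet Laplace problem. Setting $w := \Delta\psi$, the PDE becomes
\begin{equation*}
   (\boldb\cdot\nabla)w - cw = f \quad \text{in } \Omega,
\end{equation*}
a first-order linear equation whose characteristics are straight lines in direction $\boldb$. Since $b_1, b_2 > 0$, these characteristics enter $\Omega$ through $\Gamma_-$ (the left and bottom sides of the square) and exit through $\Gamma_+$. Prescribing $w|_{\Gamma_-}$ therefore determines $w$ throughout $\bar\Omega$ by the method of characteristics, and $\psi$ is then recovered from the Dirichlet problem $\Delta\psi = w$ in $\Omega$, $\psi = 0$ on $\Gamma$. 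The additional inflow condition $\partial_{\boldn}\psi = 0$ on $\Gamma_-$ plays the role of the selection rule that pins down the boundary trace $w|_{\Gamma_-}$.

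For existence with the required regularity $\psi \in C^{1,h}(\bar\Omega)\cap C^3(\Omega)$, my plan is to verify that the positivity conditions on $\boldb$ and $c$, together with $f\in C^{1,h}(\bar\Omega)$, match the hypotheses of the main existence theorem of \cite{MR2548434} and then invoke it. The nontrivial ingredients --- constructing compatible inflow data for $w$ via a shooting/fixed-point argument and propagating H\"older regularity up to the boundary, in particular at the corners of $\Omega$ where $\Gamma_-$ and $\Gamma_+$ meet --- are already carried out there, so at this stage existence reduces to a hypothesis check.

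For uniqueness, I would work self-contained as far as possible. Let $u := \psi_1 - \psi_2$ be the difference of two classical solutions; then $u$ satisfies the homogeneous equation with $u|_\Gamma = 0$ and $\partial_{\boldn} u|_{\Gamma_-} = 0$. Testing $(\boldb\cdot\nabla)\Delta u - c\Delta u = 0$ against $u$ and integrating by parts twice, using that the tangential gradient of $u$ vanishes on $\Gamma$ and that $\partial_{\boldn} u = 0$ on $\Gamma_-$, yields
\begin{equation*}
   c\int_\Omega |\nabla u|^2\,dx = \tfrac{1}{2}\int_{\Gamma_+}(\boldb\cdot\boldn)(\partial_{\boldn} u)^2\,ds.
\end{equation*}
This identity alone does not close the argument: the hard part will be showing that the linear shooting map $w|_{\Gamma_-} \mapsto \partial_{\boldn}\psi|_{\Gamma_-}$ --- obtained by first solving the transport equation for $w$ along characteristics and then the Dirichlet Laplace problem for $\psi$ --- is injective. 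Once this injectivity is established, the homogeneous problem forces $w|_{\Gamma_-} = 0$, hence $w \equiv 0$ by characteristics, and finally $u \equiv 0$ by uniqueness of the Dirichlet Laplace problem. This injectivity step is the substance of the uniqueness proof in \cite{MR2548434}, which I would invoke to conclude.
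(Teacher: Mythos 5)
The paper's proof of this proposition is a one-line citation of Theorems 1 and 2 of \cite{MR2548434}, and since your argument also ultimately defers to that reference both for the construction of the trace of $w=\Delta\psi$ and for the injectivity of the shooting map, you are in substance taking the same route; your surrounding sketch is a fair description of how Zikirov's argument is organised. Two points in the sketch should be corrected. First, with the paper's convention $\Gamma_-=\{-\boldb\cdot\boldn<0\}=\{\boldb\cdot\boldn>0\}$ and $b_1,b_2>0$, the set $\Gamma_-$ consists of the sides $x=1$ and $y=1$, \emph{not} the left and bottom sides; the characteristics of $\boldb\cdot\nabla$ enter the square through $\Gamma_+=\{x=0\}\cup\{y=0\}$, which the paper calls the outflow boundary (it is where the layers sit). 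Second, your energy identity is stronger than you give it credit for: on $\Gamma_+$ one has $\boldb\cdot\boldn<0$, so in $c\int_\Omega|\nabla u|^2=\tfrac12\int_{\Gamma_+}(\boldb\cdot\boldn)(\partial_\boldn u)^2$ the right-hand side is nonpositive while the left-hand side is nonnegative, forcing both to vanish; for $c>0$ this already yields $\nabla u\equiv 0$ and hence $u\equiv 0$ with no appeal to the shooting map. What this does not cover is the case $c=0$ admitted by the proposition, and---more seriously---the integrations by parts are not justified for a classical solution that is merely $C^{1,h}(\bar\Omega)\cap C^3(\Omega)$, since $\Delta u$ need not extend continuously (or integrably) up to $\Gamma$. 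That regularity obstruction is precisely why the paper does not run an energy argument and instead cites \cite{MR2548434} outright.
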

\begin{proof}
For the proof, see \cite[Theorems 1 \& 2]{MR2548434}.
\end{proof}
In \cite{MR2548434}, Zikirov shows the unique solvability for a more general problem, that is, for non-local 
boundary conditions. For our subproblems arising later, we also need solvability for such problems. 
To guarantee unique solvability, Zikirov states compatibility conditions (see \cite[Problem 1]{MR2548434}) for the boundary data that read in our case:
\begin{prop}\label{prop:214}
   The third-order problem
   \begin{gather*}
      (\boldsymbol{b} \cdot \nabla)\Delta \psi -c \Delta \psi=f \hspace{2em}\mbox{ in }\Omega,\\
      \begin{aligned}{4}
         \psi(0,y)&=\varphi_1(y),&
         \psi(1,y)&=\varphi_2(y),& 
         \partial_x \psi(1,y)&=\varphi_3(y),&
         0&\leq y \leq 1,\\
         \psi(x,0)&=\kappa_1(x),& 
         \psi(x,1)&=\kappa_2(x),&
         \partial_y \psi(x,1)&=\kappa_3(x)&
         0&\leq x \leq 1,
      \end{aligned}    
   \end{gather*}
   where $\boldsymbol{b}=(b_1,b_2)$ with $b_1,b_2>0$ and $c\geq 0$ are constant, has a 
   unique solution if the following compatibility conditions are fulfilled:
   \begin{align*}
      \varphi_1(0)&=\kappa_1(0),&\varphi_1(1)&=\kappa_2(0),& \kappa_1(1)&=\varphi_2(0),&\varphi_2(1)&=\kappa_2(1),\\
      \partial_x \kappa_1(1)&=\varphi_3(0),& \partial_y \varphi_1(1)&=\kappa_3(0),& \partial_x \kappa_2(1)&=\varphi_3(1),&\partial_y \varphi_2(1)&=\kappa_3(1).
   \end{align*}
\end{prop}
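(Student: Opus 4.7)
The plan is to derive Proposition~\ref{prop:214} as a direct specialisation of the existence and uniqueness result of Zikirov in \cite{MR2548434}. In that reference a third-order boundary value problem of exactly the form above is treated on the unit square, but allowing in addition non-local terms in the prescribed boundary data. Setting these non-local contributions to zero recovers precisely the problem stated here, and the eight corner identities listed above coincide with the compatibility hypotheses imposed in \cite[Problem~1]{MR2548434}.

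Concretely, I would proceed in two steps. First, I would verify that the six prescribed traces together with the eight corner identities meet the hypotheses of \cite[Problem~1]{MR2548434}. The identities relating the values of $\varphi_1,\varphi_2,\kappa_1,\kappa_2$ at the endpoints $0$ and $1$ express continuity of the prescribed trace of $\psi$ at each of the four corners of $\Omega$, while the identities involving $\partial_x\kappa_i$, $\partial_y\varphi_i$, $\varphi_3$ and $\kappa_3$ express the matching of the tangential derivative of the prescribed trace on one edge with the prescribed normal derivative on the adjacent edge at the corresponding corner. These are exactly Zikirov's algebraic compatibility requirements. Second, I would invoke Zikirov's existence and uniqueness theorem with all non-local contributions set to zero to obtain a unique classical solution $\psi\in C^{1,h}(\bar\Omega)\cap C^3(\Omega)$, as in the preceding proposition.

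The only point that demands a little care, though it remains essentially a bookkeeping exercise, is the translation of Zikirov's geometric setup into ours. Since $\boldsymbol{b}=(b_1,b_2)$ with $b_1,b_2>0$, the inflow boundary of the first-order operator $(\boldsymbol{b}\cdot\nabla)$ is the union of the left and bottom edges $\{x=0\}\cup\{y=0\}$, and it is precisely on these edges that only the trace of $\psi$ is prescribed, while on the outflow edges $\{x=1\}$ and $\{y=1\}$ both the trace and the normal derivative are prescribed. This matches exactly the distribution of data assumed in \cite[Problem~1]{MR2548434}. Once this correspondence is made explicit, the conclusion of Proposition~\ref{prop:214} follows from the reference without any further analytic work.
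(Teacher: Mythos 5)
Your proposal matches the paper exactly: the paper offers no independent proof of Proposition~\ref{prop:214} either, but obtains it precisely as you do, by specialising Zikirov's existence and uniqueness theorem for the non-local third-order problem \cite[Problem 1]{MR2548434} and reading off his corner compatibility conditions in the present setting. One terminological caveat: the paper's $\Gamma_-$ (its ``inflow'' boundary, defined via $-\boldsymbol{b}\cdot\boldsymbol{n}<0$, i.e.\ relative to the effective convection field $-\boldsymbol{b}$) is $\{x=1\}\cup\{y=1\}$, the opposite of your labelling, but your assignment of which data are prescribed on which edges --- trace only on $\{x=0\}\cup\{y=0\}$, trace and normal derivative on $\{x=1\}\cup\{y=1\}$ --- is the correct one.
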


\section{A stability estimate}\label{sec:stab}
Another important tool for the asymptotic analysis is a stability estimate.
For the statement, we recall $C^{\alpha}(I)$ for any interval 
$I\subseteq \mathbb{R}$, the space of $\alpha$-H\"older continuous functions on $I$ endowed with the usual norm. 
The special cases $\alpha=0$ and $\alpha=1$ yield the space of continuous functions and continuously differentiable 
functions, respectively. Similarly, we make use of fractional order Sobolev spaces $H^\alpha(I)$ on bounded intervals, 
$\alpha\in \mathbb{R}_{\geq 0}$, being the complex interpolation spaces of the respective Sobolev 
spaces with integer values. We recall from \cite{Tr95} the following essential properties of the 
H\"older spaces and fractional order Sobolev spaces.
\begin{theorem}[\!\!{{\cite[Sect 4.5.2 Rem 2, Sect 2.4.2 Rem 2(d), Sect 4.2.2 Thm]{Tr95}}}] 
   Let $I\subseteq \mathbb{R}$ be a bounded closed interval. 
   Then for $\alpha\in (0,1)$ there exist $d_1,d_2>0$ such that
   \[
      \| f\|_{C^\alpha(I)}\leq d_1 \|f\|_{C^0(I)}^{1-\alpha}\|f\|_{C^1(I)}^\alpha
      \quad\text{and}\quad
      \| g\|_{H^\alpha(I)}\leq d_2 \|g\|_{H^0(I)}^{1-\alpha}\|g\|_{H^1(I)}^\alpha
   \]
   for all $f\in C^1(I)$, $g\in H^1(I)$.
\end{theorem}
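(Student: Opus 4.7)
The plan is to prove the two inequalities separately, since the H\"older bound admits a direct elementary proof whereas the Sobolev bound is best approached through the abstract theory of complex interpolation.

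For the H\"older bound, fix $f\in C^1(I)$. For $x,y\in I$ with $x\neq y$, two trivial estimates are available,
\[
   |f(x)-f(y)|\leq 2\|f\|_{C^0}
   \quad\text{and}\quad
   |f(x)-f(y)|\leq \|f'\|_{C^0}|x-y|\leq \|f\|_{C^1}|x-y|.
\]
Raising the first to the power $1-\alpha$ and the second to the power $\alpha$ and multiplying yields
\[
   \frac{|f(x)-f(y)|}{|x-y|^\alpha}\leq 2^{1-\alpha}\,\|f\|_{C^0}^{1-\alpha}\|f\|_{C^1}^\alpha,
\]
which controls the $\alpha$-H\"older seminorm. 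The $C^0$-part of the $C^\alpha$-norm is absorbed by $\|f\|_{C^0}\leq\|f\|_{C^0}^{1-\alpha}\|f\|_{C^1}^\alpha$, using the embedding $\|f\|_{C^0}\leq\|f\|_{C^1}$, and we are done with $d_1=1+2^{1-\alpha}$.

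For the Sobolev case, I use that $H^\alpha(I)$ is by definition the complex interpolation space $[H^0(I),H^1(I)]_\alpha$ (as announced in the paragraph preceding the theorem). The tool of choice is Hadamard's three-lines lemma. Given $g\in H^1(I)$, assume without loss of generality $g\neq 0$ and set $\lambda\coloneqq \|g\|_{H^0}/\|g\|_{H^1}$. On the closed strip $S\coloneqq\{z\in\mathbb{C} : 0\leq\Re z\leq 1\}$ I consider the $H^1(I)$-valued analytic function
\[
   F(z)\coloneqq \e^{\delta(z-\alpha)^2}\lambda^{\alpha-z}\,g,
\]
with a small $\delta>0$ ensuring decay as $|\Im z|\to\infty$. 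A direct computation, using $|\e^{\delta(it-\alpha)^2}|\leq \e^{\delta\alpha^2}$ and $|\lambda^{\alpha-it}|=\lambda^\alpha$ on the line $\Re z=0$ (resp.\ an analogous bound on $\Re z=1$), gives
\[
   \sup_{t\in\mathbb{R}}\|F(it)\|_{H^0}\leq C\,\|g\|_{H^0}^{1-\alpha}\|g\|_{H^1}^\alpha
   \quad\text{and}\quad
   \sup_{t\in\mathbb{R}}\|F(1+it)\|_{H^1}\leq C\,\|g\|_{H^0}^{1-\alpha}\|g\|_{H^1}^\alpha.
\]
By the defining property of complex interpolation applied to $F(\alpha)=g$, one concludes $\|g\|_{H^\alpha}\leq d_2\,\|g\|_{H^0}^{1-\alpha}\|g\|_{H^1}^\alpha$.

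The step I expect to be the delicate one is, unsurprisingly, not the computation inside the three-lines argument but rather the preliminary identification of $H^\alpha(I)$ as $[H^0(I),H^1(I)]_\alpha$ for the \emph{bounded} interval $I$, together with the fact that the interpolation norm is meaningful (i.e.\ the pair $(H^0(I),H^1(I))$ is compatible and $H^1(I)$ is dense in $H^0(I)$). These structural facts are taken from Triebel's monograph, and once they are in force the quoted inequality is the standard convexity statement of complex interpolation.
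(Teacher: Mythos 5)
The paper offers no proof of this statement at all --- it is quoted directly from Triebel's monograph --- so any self-contained argument is by definition a different route. Your treatment of the H\"older inequality is correct and pleasantly elementary: combining the two bounds $|f(x)-f(y)|\leq 2\|f\|_{C^0}$ and $|f(x)-f(y)|\leq\|f\|_{C^1}|x-y|$ with exponents $1-\alpha$ and $\alpha$ is exactly the standard argument, and the bookkeeping for the $C^0$-part of the norm is fine. For the Sobolev inequality your strategy (exhibit one admissible function on the strip through $g$ at $z=\alpha$ and estimate its boundary norms; the definition of $H^\alpha(I)$ as $[H^0(I),H^1(I)]_\alpha$ is indeed the one adopted in the paper) is the right one, and you correctly flag the structural facts about the couple $(H^0,H^1)$ that must be imported.

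There is, however, one concrete error: your normalising factor is inverted. With $\lambda=\|g\|_{H^0}/\|g\|_{H^1}$ and the exponent $\alpha-z$, on the line $\Re z=1$ you get
\[
  \|F(1+it)\|_{H^1}\leq C\,\lambda^{\alpha-1}\|g\|_{H^1}
  = C\,\|g\|_{H^0}^{\alpha-1}\|g\|_{H^1}^{2-\alpha},
\]
which exceeds the target $C\,\|g\|_{H^0}^{1-\alpha}\|g\|_{H^1}^{\alpha}$ by the factor $\bigl(\|g\|_{H^1}/\|g\|_{H^0}\bigr)^{2-2\alpha}\geq 1$, so the second displayed sup-bound is false as stated. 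Taking $\lambda=\|g\|_{H^1}/\|g\|_{H^0}$ (equivalently, using the exponent $z-\alpha$ with your $\lambda$) repairs both boundary estimates simultaneously, and the rest of the argument goes through. A cosmetic remark: once the interpolation norm is defined as an infimum over admissible functions, you do not need to invoke the three-lines lemma yourself --- exhibiting one admissible $F$ with $F(\alpha)=g$ already bounds $\|g\|_{H^\alpha}$ from above; the three-lines lemma is hidden in the structural facts you are already citing from Triebel.
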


\begin{defi}
   Consider two functions $g_1 \in C^1(\partial \Omega)\cap H^{3/2}(\partial \Omega)$ 
   (more explicitly $g_1 \in C^1(\Gamma_i)\cap H^{3/2}(\Gamma_i)$, where $\Gamma_i$ are the 
   edges of the unit square) and $g_2\in C(\partial \Omega)\cap H^{1/2}(\partial \Omega)$. 
   We say $(g_1,g_2)$ are \textit{admissible} boundary values, if there exists a function 
   $\phi \in \domain{\Delta^2}$, such that
   \[
      \phi|_{\partial\Omega}=g_1
      \quad\text{and}\quad
      \partial_\boldn \phi|_{\partial \Omega}=g_2.
   \]
\end{defi}
We now consider the following problem
\begin{equation}\label{eq:inhomo}
   \begin{split}
      \varepsilon \Delta^2 \psi + (\boldsymbol{b} \cdot \nabla)\Delta \psi -c \Delta \psi=f \hspace{2em}&\mbox{ in }\Omega=(0,1)^2,\\
      \psi=g_1\hspace{2em}&\mbox{ on }\Gamma=\partial \Omega,\\
      \partial_\boldn \psi=g_2\hspace{2em}&\mbox{ on }\Gamma,
   \end{split}
\end{equation}
where $(g_1,g_2)$ are admissible boundary values and $f\in L^2(\Omega)$. 

In the following, whenever appropriate, we stick to the custom of denoting by $C>0$ a generic constant independent of $\eps$.
\begin{theorem}\label{th:stab2d}
   Consider problem \eqref{eq:inhomo}. The solution $\psi$ can be estimated by
   \[
      \|\psi\|_{L^\infty(\Omega)}\leq C \left(\varepsilon^{-1} \left(\|f\|_{L^2(\Omega)}+\|g_1\|_{C^{1/2}(\partial\Omega)}+\|g_2\|_{L^2(\partial \Omega)}\right)
                                              +\|g_1\|_{C^1(\partial \Omega)}+\|g_2\|_{C(\partial \Omega)}\right).
   \]
   Furthermore, we have
   \[
      \|\psi\|_{H^1(\Omega)}\leq C \varepsilon^{-1} \left(\|f\|_{L^2(\Omega)}+\|g_1\|_{C^{1/2}(\partial\Omega)}+\|g_2\|_{L^2(\partial\Omega)}\right).
   \]
\end{theorem}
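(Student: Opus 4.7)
The plan is to reduce to homogeneous boundary conditions via a lifting $\tilde\psi := \psi - \phi$, where $\phi \in \domain{\Delta^2}$ is an interior function realizing the admissible boundary data $(g_1,g_2)$, and then to prove the two estimates separately for the new right-hand side $\tilde f := f - L\phi$. By the characterization in Theorem~\ref{thm:ass_of_bdy_cdt} and Remark~\ref{rem:reg_delta_int}, one has $\tilde\psi \in \domain{\interior\Delta} = H_0^2(\Omega)$ and $L\tilde\psi = \tilde f$, so the theorem reduces to (i) bounding $\tilde\psi$ in $H^1$ and $L^\infty$ by $\|\tilde f\|_{L^2}$, and (ii) bounding $\phi$ and $\tilde f$ in the relevant norms by the stated boundary data norms.

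For the $H^1$-bound, I would pair $L\tilde\psi = \tilde f$ with $\tilde\psi$ in $L^2(\Omega)$. Since $\tilde\psi \in H_0^2(\Omega)$, all boundary terms from integrations by parts vanish, and the key simplification is that the third-order convective term cancels identically:
\[
  \langle (\boldsymbol{b}\cdot\nabla)\Delta\tilde\psi, \tilde\psi\rangle
    = -\langle \Delta\tilde\psi, (\boldsymbol{b}\cdot\nabla)\tilde\psi\rangle
    = \tfrac{1}{2}\!\int_{\partial\Omega}(\boldsymbol{b}\cdot\boldn)|\nabla\tilde\psi|^2\, dS = 0,
\]
because $\boldsymbol{b}$ is constant and $\nabla\tilde\psi$ vanishes on $\partial\Omega$. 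Together with $\varepsilon\langle\Delta^2\tilde\psi,\tilde\psi\rangle = \varepsilon\|\Delta\tilde\psi\|_{L^2}^2$ and $-c\langle\Delta\tilde\psi,\tilde\psi\rangle = c\|\nabla\tilde\psi\|_{L^2}^2$, this yields the clean identity $\varepsilon\|\Delta\tilde\psi\|^2 + c\|\nabla\tilde\psi\|^2 = \langle\tilde f,\tilde\psi\rangle$, and Cauchy--Schwarz combined with Poincar\'e gives $\|\tilde\psi\|_{H^1} \leq C\|\tilde f\|_{L^2}$.

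For the $L^\infty$-bound the cleanest route is to pass to the auxiliary variable $w := \Delta\psi$, which solves the \emph{second-order} singularly perturbed convection--diffusion--reaction problem $\varepsilon\Delta w + (\boldsymbol{b}\cdot\nabla)w - c w = f$. I would apply the $L^\infty$-stability results of \cite{MR91} for such second-order equations, obtaining $\|w\|_{L^\infty}$ (or $\|w\|_{L^p}$ for some $p > 1$) controlled by $\varepsilon^{-1}\|f\|_{L^2}$ together with suitable boundary data of $w$ extracted from $(g_1,g_2)$. The original solution $\psi$ is then recovered from the Poisson problem $\Delta\psi = w$ with $\psi|_{\partial\Omega} = g_1$ via the maximum principle: $\|\psi\|_{L^\infty} \leq \|g_1\|_{C(\partial\Omega)} + C\|w\|_{L^p}$, which precisely produces the two pieces in the stated estimate --- the $\varepsilon^{-1}$-weighted interior part and the $\varepsilon$-free boundary part $\|g_1\|_{C^1} + \|g_2\|_C$.

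The main obstacle is step (ii): constructing the lifting $\phi$ so that $\|L\phi\|_{L^2} = \|\varepsilon\Delta^2\phi + (\boldsymbol{b}\cdot\nabla)\Delta\phi - c\Delta\phi\|_{L^2}$ is controlled by the stated mixed boundary-data norms with exactly an $\varepsilon^{-1}$-factor in front of $\|g_1\|_{C^{1/2}}$ and $\|g_2\|_{L^2}$. A naive $H^4$-extension would produce an $\varepsilon^{-2}$ factor from the $\varepsilon\Delta^2\phi$ contribution. The resolution is to choose $\phi$ adapted to a boundary strip of width $\varepsilon$, where derivatives scale with negative powers of $\varepsilon$ in such a way that the four summands of $L\phi$ balance; the Triebel interpolation inequalities cited just above the statement are then used to express the mixed fractional boundary norms $\|g_1\|_{C^{1/2}}$ and $\|g_2\|_{L^2}$ as geometric means of the stronger $C^1, C^0$ and weaker $H^0$ norms, which is precisely what allows the $\varepsilon$-weights to redistribute sharply between the two terms of the final estimate.
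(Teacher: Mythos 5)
Your overall strategy --- split off a lifting of the boundary data and run an energy estimate on the homogeneous remainder --- matches the skeleton of the paper's proof, and your energy identity $\varepsilon\|\Delta\tilde\psi\|^2+c\|\nabla\tilde\psi\|^2=\Re\langle\tilde f,\tilde\psi\rangle$ is exactly Lemma~\ref{lem:216}. But the two places you flag or gloss over are where the actual content lies, and both of your proposed resolutions fail.

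First, the lifting. You correctly observe that a generic extension makes $\varepsilon\|\Delta^2\phi\|_{L^2}$ blow up, but your fix (a boundary strip of width $\varepsilon$ plus ``redistributing $\varepsilon$-weights by interpolation'') does not work: on a strip of width $\varepsilon$ the term $\varepsilon\|\Delta^2\phi\|_{L^2}$ scales like $\varepsilon\cdot\varepsilon^{-4}\cdot\varepsilon^{1/2}=\varepsilon^{-5/2}$ times the data, far worse than $\varepsilon^{-1}$. The paper's resolution is structural, not scaling-based: take the lifting $v$ to be the solution of the \emph{homogeneous biharmonic} problem $\Delta^2v=0$ with boundary data $(g_1,g_2)$, so that $\varepsilon\Delta^2v\equiv 0$ and only $-(\boldsymbol{b}\cdot\nabla)\Delta v+c\Delta v$ survives as a forcing for the homogeneous problem. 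Crucially, this forcing is then measured in $H^{-2}$ (Lemma~\ref{lem:216} is stated with $\|f\|_{H^{-2}}$ precisely for this reason), so it is controlled by $\|v\|_{H^1}$ alone; and $\|v\|_{H^1}\leq C(\|g_1\|_{H^{1/2}}+\|g_2\|_{H^{-1/2}})$ is established separately in Lemma~\ref{lem:217} by a duality argument against an auxiliary clamped-plate problem with $H^3$-regularity from \cite{BR80}. The Triebel interpolation statement is only used to compare norms, not to rebalance powers of $\varepsilon$.

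Second, the $L^\infty$ bound. Your route through $w:=\Delta\psi$ as a second-order convection--diffusion problem is not closed: the given data are $\psi|_{\partial\Omega}$ and $\partial_{\boldsymbol{n}}\psi|_{\partial\Omega}$, and these determine nothing about $\Delta\psi|_{\partial\Omega}$ --- this is the classical obstruction to decoupling the clamped (Dirichlet--Neumann) fourth-order problem into two second-order problems. There is no way to ``extract suitable boundary data of $w$ from $(g_1,g_2)$.'' The paper instead applies the Agmon--Miranda maximum principle of \cite{MR91} directly to the fourth-order biharmonic part $v$, yielding $\|v\|_{C^1(\bar\Omega)}\leq C(\|g_1\|_{C^1(\partial\Omega)}+\|g_2\|_{C(\partial\Omega)})$ (the $\varepsilon$-free piece of the estimate), and bounds the homogeneous part in $L^\infty$ via the embedding $H^2(\Omega)\hookrightarrow L^\infty(\Omega)$ applied to the $H^2$-estimate of Lemma~\ref{lem:216} (the $\varepsilon^{-1}$-weighted piece). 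You would need to replace both your lifting construction and your $L^\infty$ argument to complete the proof.
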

Before proving this theorem, we collect some intermediate results in two lemmas.
\begin{lemma}\label{lem:216}
   Consider problem \eqref{eq:inhomo} with homogeneous boundary conditions, that is, assume $g_1=g_2=0$. 
   The solution $u \in H_0^2(\Omega)\cap H^4(\Omega)$ can then be estimated by
   \[
      \|u\|_{H^2(\Omega)}\leq C \varepsilon^{-1}\|f\|_{H^{-2}(\Omega)}.
   \]
\end{lemma}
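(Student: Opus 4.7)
The plan is to carry out the standard energy estimate: multiply the equation by the solution $u$, integrate over $\Omega$, and exploit a cancellation that makes the convective term drop out entirely. Existence of $u\in H_0^2(\Omega)\cap H^4(\Omega)$ is provided by Theorem \ref{thm:sol_thy_app} (for $f\in L^2(\Omega)$, which we may assume by density when proving an estimate phrased in a weaker norm), and the $H^4$-regularity legitimises every integration by parts below.

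I would compute $\langle \varepsilon\Delta^2 u + (\boldsymbol{b}\cdot\nabla)\Delta u - c\Delta u,\, u\rangle_{L^2}$ term by term. Two integrations by parts, using $u=0$ and $\partial_{\boldn}u=0$ on $\partial\Omega$, turn the first summand into $\varepsilon\|\Delta u\|_{L^2}^2$, while a single integration by parts turns $-c\langle\Delta u, u\rangle$ into $c\|\nabla u\|_{L^2}^2$. The key observation is that the convective contribution vanishes identically. Indeed, because $\operatorname{div}\boldsymbol{b}=0$ and $u=0$ on $\partial\Omega$, one integration by parts gives $\langle(\boldsymbol{b}\cdot\nabla)\Delta u,u\rangle = -\langle \Delta u,\, \boldsymbol{b}\cdot\nabla u\rangle$; a second one, using $\partial_{\boldn}u=0$, rewrites this as $\int_\Omega \nabla u\cdot\nabla(\boldsymbol{b}\cdot\nabla u)\,dx = \tfrac12\int_\Omega (\boldsymbol{b}\cdot\nabla)|\nabla u|^2\,dx$. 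The latter reduces to a boundary integral of $(\boldsymbol{b}\cdot\boldn)|\nabla u|^2$, and this vanishes because the combined Dirichlet-Neumann conditions force $\nabla u=0$ on all of $\partial\Omega$ (tangential part from $u\equiv 0$, normal part from $\partial_{\boldn}u=0$).

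What remains is the coercivity identity $\varepsilon\|\Delta u\|_{L^2}^2 + c\|\nabla u\|_{L^2}^2 = \langle f, u\rangle$. I would bound the right-hand side via the $H^{-2}(\Omega)$-$H_0^2(\Omega)$ duality by $\|f\|_{H^{-2}}\|u\|_{H^2}$, and then appeal to the optimal regularity of the Dirichlet Laplacian on the convex domain $\Omega=(0,1)^2$ recalled in Remark \ref{rem:reg_delta_int}, which together with Poincar\'e's inequality yields $\|u\|_{H^2}\leq C\|\Delta u\|_{L^2}$ for every $u\in H_0^2(\Omega)$. Inserting this bound and cancelling one factor of $\|\Delta u\|_{L^2}$ gives $\varepsilon\|\Delta u\|_{L^2}\leq C\|f\|_{H^{-2}}$, and one further application of the optimal regularity estimate produces the announced $\|u\|_{H^2}\leq C\varepsilon^{-1}\|f\|_{H^{-2}}$.

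The only delicate point I foresee is the bookkeeping of boundary contributions in the three integrations by parts used to eliminate the convective term; this step relies essentially on the constancy of $\boldsymbol{b}$ and on the full strength of $u\in H_0^2(\Omega)$. The appearance of $\varepsilon^{-1}$ in the final estimate reflects precisely the linear degeneracy of the $H^2$-coercivity of $L$ as $\varepsilon\to 0$.
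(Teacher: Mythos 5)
Your proposal is correct and follows essentially the same route as the paper: an energy estimate $\varepsilon\|\Delta u\|_{L^2}^2+c\|\nabla u\|_{L^2}^2=\Re\langle f,u\rangle$ obtained by showing the convective term $\langle(\boldsymbol{b}\cdot\nabla)\Delta u,u\rangle$ has vanishing real part via integration by parts (the paper deduces this from the antisymmetry $\Re\langle(\boldsymbol{b}\cdot\nabla)\Delta u,u\rangle=-\Re\langle u,(\boldsymbol{b}\cdot\nabla)\Delta u\rangle$, while you track the boundary terms explicitly down to $(\boldsymbol{b}\cdot\boldsymbol{n})|\nabla u|^2$), followed by the $H^{-2}$--$H_0^2$ duality and the equivalence of $\|\cdot\|_{H^2}$ with $\|\Delta\cdot\|_{L^2}$ on $H_0^2(\Omega)$ from Remark \ref{rem:reg_delta_int}. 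No gaps.
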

\begin{proof}
   For $u\in H_0^2(\Omega)\cap H^4(\Omega)$, using integration by parts and $\Div\boldb=0$, we get
   \[
       \Re\langle (\boldb \cdot\nabla) \Delta u,u\rangle=-\Re\langle u,(\boldb \cdot\nabla) \Delta u\rangle,       
   \]
   which yields
   \[
    \Re \langle (\boldb \cdot\nabla) \Delta u,u\rangle=0.
   \]
Thus, we get from the differential equation
  \begin{align*}
       \Re\langle f,u\rangle_{L^2}&=\Re\langle Lu,u\rangle
       \\ & =\Re\langle\left(\varepsilon\Delta^2 u+ (\boldb \cdot\nabla) \Delta u-c \Delta u\right), u\rangle
       \\ & =\varepsilon\|\Delta u\|_{L^2(\Omega)}^2 +c\|\nabla u\|_{L^2(\Omega)}^2.
   \end{align*}
   Hence,
   \[
    \varepsilon\|\Delta u\|_{L^2(\Omega)}^2 +c\|\nabla u\|_{L^2(\Omega)}^2  \leq \|f\|_{H^{-2}(\Omega)}\|u\|_{H^2(\Omega)}.
   \]
   Using the equivalence of the $H^2$-norm and $u\mapsto \|\Delta u\|_{L^2(\Omega)}$  in $H_0^2$ (cf.~Remark \ref{rem:reg_delta_int}), we get the desired result.
\end{proof}
\begin{lemma}\label{lem:217}
   Let $(g_1,g_2)$ be admissible boundary values. Let $v\in H^2(\Omega)$ be the variational solution of the problem
   \begin{equation}\label{eq:inhomobilaplace}
      \begin{split}
      \Delta^2 v=0\hspace{2em}&\mbox{ in }\Omega=(0,1)^2,\\
      v=g_1\hspace{2em}&\mbox{ on }\Gamma=\partial \Omega,\\
      \partial_\boldn v=g_2\hspace{2em}&\mbox{ on }\Gamma.
      \end{split}
   \end{equation}
   The triple $(v,g_1,g_2) \in H^1(\Omega) \times H^{1/2}(\partial \Omega) \times H^{-1/2}(\partial \Omega)$ 
   can also be understood as a generalised solution of \eqref{eq:inhomobilaplace} in the sense of 
   \cite[Theorem 3.2.1]{KMR97}. Then, $v$ fulfils the following estimate
   \[
      \|v\|_{H^1(\Omega)}\leq C \left(\|g_1\|_{H^{1/2}(\partial \Omega)}+\|g_2\|_{H^{-1/2}(\partial \Omega)}\right).
   \]
\end{lemma}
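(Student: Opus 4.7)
The plan is to derive the estimate as a direct application of \cite[Theorem 3.2.1]{KMR97}; the remaining work is to verify that $v$ qualifies as a generalised solution in that theorem's sense and that the setup of \cite{KMR97} is applicable in our situation.

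First, I would make precise the existence of the variational solution $v \in H^2(\Omega)$. Using admissibility of $(g_1,g_2)$, pick a lifting $\phi \in \domain{\Delta^2}$ matching the boundary data. Setting $w := v - \phi$, it suffices to find $w \in H_0^2(\Omega)$ with $\Delta^2 w = -\Delta^2 \phi \in L^2(\Omega)$. Such a $w$ is produced by the self-adjointness and strict positivity of $\Delta\interior{\Delta}$ (cf.~Remark~\ref{rem:ddstar_ci}) together with the $H^4$-regularity from Theorem~\ref{thm:BRT2}. This yields a unique $v = w + \phi \in H^2(\Omega)$ solving \eqref{eq:inhomobilaplace} variationally.

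Second, I would identify $v$ with the generalised solution produced by \cite[Theorem 3.2.1]{KMR97}. Since $v \in H^2 \subseteq H^1$ and the boundary traces already enjoy the higher regularity $g_1 \in H^{3/2}(\partial\Omega) \subseteq H^{1/2}(\partial\Omega)$ and $g_2 \in H^{1/2}(\partial\Omega) \subseteq H^{-1/2}(\partial\Omega)$, the triple $(v,g_1,g_2)$ satisfies the defining variational identity of the generalised solution (obtained by integration by parts against sufficiently smooth test functions supported away from the corners); uniqueness in the generalised class then pins it down as the generalised solution. Applying \cite[Theorem 3.2.1]{KMR97} immediately yields the asserted bound
\[
   \|v\|_{H^1(\Omega)} \leq C\left(\|g_1\|_{H^{1/2}(\partial\Omega)} + \|g_2\|_{H^{-1/2}(\partial\Omega)}\right).
\]

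The main obstacle I expect is this identification step: one must check that the unit square, being a convex polygonal Lipschitz domain with right-angle corners, falls within the class of domains treated in \cite{KMR97}, and that the generalised solution there truly agrees with the variational one constructed above. Since all corner angles equal $\pi/2$, the critical singular exponents of the biharmonic operator at each vertex lie above the $H^1$-threshold, so no weighted or corner-adapted Sobolev refinements are required and the identification proceeds in a direct fashion.
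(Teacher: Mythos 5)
Your proposal defers the entire content of the lemma to \cite[Theorem 3.2.1]{KMR97}: after constructing $v$ and identifying it with the generalised solution, you assert that the theorem ``immediately yields'' the bound. That is exactly the step the lemma exists to supply, and it is not proved in your write-up. The phrase in the statement about understanding $(v,g_1,g_2)$ as a generalised solution in the sense of \cite{KMR97} only fixes the notion of solution; the quantitative estimate $\|v\|_{H^1}\leq C(\|g_1\|_{H^{1/2}}+\|g_2\|_{H^{-1/2}})$ for the biharmonic Dirichlet problem with data one order below the energy scale (so that $v\in H^1$ rather than $H^2$, and $\partial_\boldn v$ is not even a bounded trace on $H^1$) is a genuine well-posedness statement on a polygonal domain. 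You flag the relevant obstacle yourself --- whether the square and the exponent range fall within the scope of the cited theorem --- but then dispose of it with an unverified claim that ``the critical singular exponents of the biharmonic operator at each vertex lie above the $H^1$-threshold.'' Checking the spectrum of the corner operator pencil for $\Delta^2$ at an opening angle of $\pi/2$ is precisely the nontrivial input here, and no computation or citation is given for it.

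The paper avoids this entirely by proving the estimate directly with a transposition-type duality argument: it introduces the auxiliary problem $\Delta^2 w=\Delta v-v$ with homogeneous Dirichlet and Neumann conditions, invokes the $H^3(\Omega)$ regularity $\|w\|_{H^3}\leq C\|v\|_{H^1}$ from \cite[Theorem 2]{BR80} (which is the concrete, square-specific substitute for any corner-exponent discussion), lifts $g_1$ to some $z\in H^1(\Omega)$ with $\|z\|_{H^1}\leq C\|g_1\|_{H^{1/2}}$, and then uses Green's formula to turn $\int_\Omega(|v|^2+|\nabla v|^2)$ into boundary pairings of $g_2$ with $\Delta w$ and interior pairings involving $z$ and $w$, each of which is controlled by the trace theorem and the $H^3$ bound. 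To repair your argument you would either need to reproduce such a duality computation, or quote the precise hypotheses and conclusion of \cite[Theorem 3.2.1]{KMR97} and verify them for the square, including the pencil-spectrum condition you currently only assert. Your first paragraph (existence of the variational solution via the lifting and Theorem~\ref{thm:BRT2}) is fine but is not where the difficulty lies.
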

\begin{proof}
   The proof follows an idea of \cite{RPersonal}, which we will repeat here. For ease of formulation, we stick 
   to the case of real-valued functions, the complex case follows similar lines.
   
   Let $w \in H_0^2(\Omega)$ be the variational solution of the problem
   \begin{equation}\label{eq:helpproblem}
      \begin{split}
         \Delta^2 w=\Delta v -v \hspace{2em}&\mbox{ in }\Omega=(0,1)^2,\\
         w=0\hspace{2em}&\mbox{ on }\Gamma=\partial \Omega,\\
         \partial_\boldn w=0\hspace{2em}&\mbox{ on }\Gamma.
      \end{split}
   \end{equation}
   It follows from \cite[Theorem 2]{BR80} that $w \in H^3(\Omega)$ and
   \begin{equation}
      \|w\|_{H^3(\Omega)}\leq C \|\Delta v -v\|_{H^{-1}(\Omega)}\leq C \|v\|_{H^1(\Omega)}.\label{eq:wv}
   \end{equation}
   Furthermore, we find a function $z \in H^1(\Omega)$ with $z=g_1$ on $\partial \Omega$ in the sense of traces and
   \begin{equation}
      \|z\|_{H^1(\Omega)} \leq C \|g_1\|_{H^{1/2}(\partial \Omega)},\label{eq:zg1}
   \end{equation}
   see \cite[Corollary B.53]{EG04}. With the help of Green's formula, we get
   \begin{equation}\label{eq:greenzauber}
      \begin{split}
         0&= \int_\Omega (\Delta^2 v) w 
           =\int_\Omega \Delta v \Delta w 
           =\int_\Omega v \Delta^2 w + \int_{\partial \Omega} \left((\partial_\boldn v) \Delta w -v \partial_\boldn \Delta w\right)\\
          &=\int_\Omega v(\Delta v-v) +\int_{\partial\Omega} g_2 \Delta w -\int_{\partial \Omega} z \partial_\boldn \Delta w.
      \end{split}
   \end{equation}
   We now make use of the equations
   \[
      \int_\Omega v (\Delta v - v)=-\int_{\Omega} \left(|v|^2+|\nabla v|^2\right) +\int_{\partial\Omega} v \partial_\boldn v
   \]
   and
   \begin{align*}
      \int_{\partial \Omega} z \partial_\boldn \Delta w 
         &=\int_{\partial\Omega} z \nabla \Delta w \cdot n 
          =\int_\Omega \Div(z\nabla \Delta w)
          =\int_\Omega (\nabla z \cdot \nabla \Delta w + z \Delta^2 w)\\
         &=\int_\Omega (\nabla z \cdot \nabla \Delta w+z\Delta v-zv)\\
         &=\int_\Omega (\nabla z\cdot \nabla \Delta w-\nabla z\cdot \nabla v-zv)+\int_{\partial\Omega} z \partial_\boldn v
   \end{align*}
   to get from equation \eqref{eq:greenzauber} and the relation $v=z$ on $\partial \Omega$
   \begin{equation}\label{eq:help2}
      \int_\Omega (|v|^2+|\nabla v|^2)=\int_\Omega (zv+\nabla z\cdot \nabla v-\nabla z\cdot \nabla \Delta w)+\int_{\partial\Omega}g_2 \Delta w.
   \end{equation}
   Using \eqref{eq:zg1} we can estimate the first term by
   \begin{align*}
      \left|\int_\Omega (zv+\nabla z\cdot \nabla v-\nabla z\cdot \nabla \Delta w)\right|
         &\leq \|z\|_{H^1(\Omega)}\|v\|_{H^1(\Omega)}+\|z\|_{H^1(\Omega)}\|w\|_{H^3(\Omega)}\\
         &\leq C \|g_1\|_{H^{1/2}(\partial \Omega)}\|v\|_{H^1(\Omega)}
   \end{align*}
   and with the help of a trace inequality (see \cite[Theorem 5.5]{necas}) and \eqref{eq:wv}, 
   we get for the second term
   \begin{align*}
      \left|\int_{\partial\Omega}g_2 \Delta w\right|
         &\leq \|g_2\|_{H^{-1/2}(\partial\Omega)}\|\Delta w\|_{H^{1/2}(\partial\Omega)}
          \leq C \|g_2\|_{H^{-1/2}(\partial\Omega)} \|w\|_{H^3(\Omega)}\\
         &\leq C \|g_2\|_{H^{-1/2}(\partial\Omega)} \|v\|_{H^1(\Omega)}.
   \end{align*}
   From \eqref{eq:help2} we get 
   \[
      \|v\|_{H^1(\Omega)}^2 \leq C \left( \|g_1\|_{H^{1/2}(\partial \Omega)} + \|g_2\|_{H^{-1/2}(\partial\Omega)} \right)\|v\|_{H^1(\Omega)},
   \]
   which we wanted to prove.
\end{proof}
\begin{proof}[Theorem \ref{th:stab2d}]
   We want to have an estimate for the function $\psi$. We consider the function $v$ and $w$ with $\psi=v+w$, where $v$ is 
   the solution of the problem \eqref{eq:inhomobilaplace} and $w$ is the solution of
   \begin{equation*}
      \begin{split}
         Lw=f-(\boldb\cdot \nabla)\Delta v+c\Delta v \hspace{2em}&\mbox{ in }\Omega=(0,1)^2,\\
         w=0\hspace{2em}&\mbox{ on }\Gamma=\partial \Omega,\\
         \partial_\boldn w=0\hspace{2em}&\mbox{ on }\Gamma.
      \end{split}
   \end{equation*}
   By Lemmas \ref{lem:216} and \ref{lem:217} we have
   \begin{align*}
      \|w\|_{H^2(\Omega)}
         &\leq C\varepsilon^{-1}\left( \|f\|_{H^{-2}(\Omega)}+\|-(\boldb\cdot \nabla)\Delta v+c\Delta v\|_{H^{-2}(\Omega)}\right)\\
         &\leq C\varepsilon^{-1}\left(\|f\|_{H^{-2}(\Omega)}+\|v\|_{H^1(\Omega)}\right)\\
         &\leq  C\varepsilon^{-1}\left(\|f\|_{H^{-2}(\Omega)}+\|g_1\|_{H^{1/2}(\partial \Omega)}+\|g_2\|_{H^{-1/2}(\partial \Omega)}\right)\\
         &\leq  C\varepsilon^{-1}\left(\|f\|_{L^2(\Omega)}+\|g_1\|_{C^{1/2}(\partial \Omega)}+\|g_2\|_{L^2(\partial \Omega)}\right).
   \end{align*}
   With the Agmon-Miranda maximum principle, see \cite[Theorem 10]{MR91}, applied to $v$ we obtain
   \begin{equation}\label{eq:agmon}
      \|v\|_{C^1(\bar{\Omega})}\leq C\left(\|g_1\|_{C^1(\partial \Omega)}+\|g_2\|_{C(\partial \Omega)}\right).
   \end{equation}
   By the continuity of the embedding $H^2(\Omega)\hookrightarrow L^\infty(\Omega)$, see \cite[Theorem 4.12]{Adams03}, and \eqref{eq:agmon} we get
   \begin{align*}
      \|\psi\|_{L^\infty(\Omega)}&\leq \|w\|_{L^\infty(\Omega)}+\|v\|_{L^\infty(\Omega)}\\
      &\leq C (\|w\|_{H^2(\Omega)}+\|v\|_{C^1(\Omega)})\\
      &\leq C \left(\varepsilon^{-1} \left(\|f\|_{L^2(\Omega)}+\|g_1\|_{C^{1/2}(\partial\Omega)}+\|g_2\|_{L^2(\partial \Omega)}\right)+\|g_1\|_{C^1(\partial \Omega)}+\|g_2\|_{C(\partial \Omega)}\right).
   \end{align*}
   For the second estimate in Theorem \ref{th:stab2d} we use the $H^1$-norm estimate from Lemma \ref{lem:217} and get
   \begin{align*}
      \|\psi\|_{H^1(\Omega)}
         &\leq \|w\|_{H^1(\Omega)}+\|v\|_{H^1(\Omega)}\\
         &\leq C(\|w\|_{H^2(\Omega)}+\|v\|_{H^1(\Omega)})\\
         &\leq C \varepsilon^{-1} \left(\|f\|_{L^2(\Omega)}+\|g_1\|_{C^{1/2}(\partial\Omega)}+\|g_2\|_{L^2(\partial\Omega)}\right).
   \end{align*}
\end{proof}
\section{Solution decomposition}\label{sec:decomposition}

For the derivation of the solution decomposition for the 2D problem \eqref{eq:fourthorder}, 
we use the method of asymptotic expansions in powers of $\eps$, see for instance \cite{JF96}. 
We will follow ideas presented in \cite{LS01}, where an asymptotic expansion of the type $\sum_{i} \varepsilon^i u_i$
for a second-order problem is given. There are some differences between the approach presented in \cite{LS01} and ours, mainly
due to the fact that we have a fourth-order problem and two different types of boundary conditions.
In particular, the Neumann-boundary condition necessitates correction functions that interact
across different $\eps$-levels. Thus, in comparison to the second-order problem, the structure is more involved.

The boundary layer terms involve exponentially decaying functions in $x$ and $y$, which we will denote by
\[
 E_1(x) = \e^{-b_1 x/ \varepsilon}
 \quad\mbox{and}\quad
 E_2(y) = \e^{-b_2 y/ \varepsilon}.
\]
In this section, we provide a formal analysis and assume all solutions to be as smooth as needed. 
In fact, $C^4(\bar{\Omega})$ will be sufficient.

\subsection{Formal expansion}
The formal ansatz as an infinite series for the structure of the solution $\psi$ would be:
\begin{equation}\label{eq:ansatz2d}
   \psi=\sum_{i=0}^\infty \varepsilon^i \psi_i +\sum_{i=0}^\infty \varepsilon^i v_i+\sum_{i=0}^\infty \varepsilon^i w_i +\sum_{i=0}^\infty \varepsilon^i z_i.
\end{equation} 
Since we are interested in a lower order expansion only, we will confine ourselves with finite 
sums in the expression \eqref{eq:ansatz2d}. To be more precise, we seek an approximation of $\psi$, 
the solution of \eqref{eq:fourthorder}, of the form
\begin{equation}\label{eq:ansatzfinite}
   \Psi=  \sum_{i=0}^j \varepsilon^i \psi_i 
         +\sum_{i=0}^k \varepsilon^i v_i
         +\sum_{i=0}^l \varepsilon^i w_i
         +\sum_{i=0}^m \varepsilon^i z_i,
\end{equation} 
where the integers $j,k,l,m$ will be specified later. The first sum represents the outer expansion 
by means of reduced problems with Dirichlet boundary conditions and homogeneous Neumann conditions at the 
inflow boundary. The boundary corrections needed are corrections of Neumann values. They are split into two 
sums for the different sides of $\Omega$ in terms near the outflow corner. The final sum of \eqref{eq:ansatzfinite} 
corrects Neumann values introduced by the previous correction terms near the outflow corner. 
Figure~\ref{fig:corrections} depicts the situation for the first step for $\boldb=(b_1,b_2)\in \mathbb{R}^2$ 
with $b_1,b_2>0$, which we shall abbreviate by writing $\boldb>0$.

\begin{figure}[htb!]
\begin{center}
\includegraphics{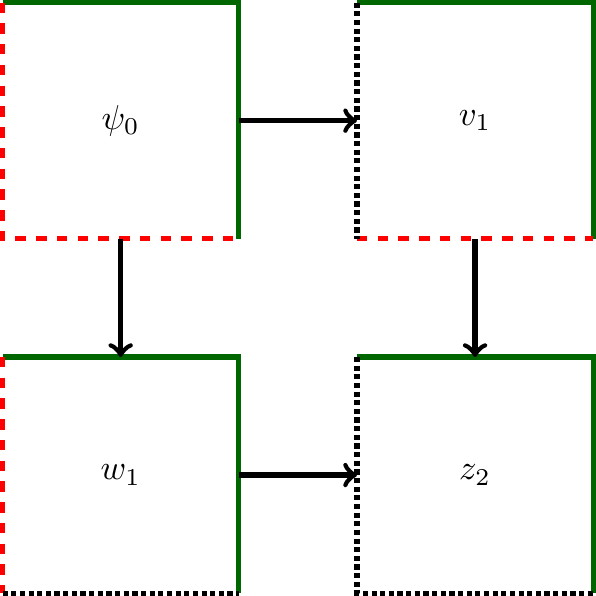}
\end{center}
\caption{\label{fig:corrections} The interplay of Neumann corrections, solid line: no error, dashed line: error, densely-dotted line: correction}
\end{figure}
Let us start by defining $\psi_0$. It solves a reduced problem that can formally be found by 
comparing powers of $\varepsilon$ in the general approach \eqref{eq:ansatz2d}. We have
\begin{subequations}\label{eq:psi_0}
 \begin{alignat}{2}
   (\boldsymbol{b}\cdot \nabla)\Delta \psi_0-c\Delta \psi_0&=f\quad&&\mbox{ in }\Omega=(0,1)^2,\\
   \psi_0&=0 &&\mbox{ on }\Gamma=\partial \Omega,\\
  \partial_\boldn \psi_0&=0 &&\mbox{ on }\Gamma_-,
\end{alignat}
\end{subequations}
where $\Gamma_-:=\{(x,y)\in \Gamma|-\boldsymbol{b}\cdot \boldsymbol{n}(x,y)<0\}$ is the inflow boundary.
According to Proposition \ref{prop:214} the third-order problem \eqref{eq:psi_0} has a 
unique solution satisfying homogeneous Dirichlet conditions on the whole boundary $\Gamma$, 
and homogeneous Neumann conditions on the inflow boundary $\Gamma_-$ only. This is similar to the 
asymptotic expansions for second order problems as in \cite{LS01}.

Now the normal derivative of $\psi_0$ is in general non-zero on the outflow boundary. 
Thus, we correct these Neumann data by correction functions $v_i$ in $x$-direction and 
$w_i$ in $y$-direction. We recall our standing assumption of $\boldb>0$, the general case $\boldb=(b_1,b_2)$ with $b_1\cdot b_2\neq 0$ can be derived from the one with $\boldb>0$ by a change of coordinates. Using $\boldb>0$, we find the outflow boundary at $x=0$ and $y=0$. The layers occur at $x=0$ and $y=0$ as well.

We will now define the correction function for the layer at $x=0$. They can formally be
derived with the help of the stretched variable $\xi$ defined by $\xi=\frac{x}{\varepsilon}$ 
and comparison of powers of $\varepsilon$ in the transformed differential operator $\tilde{L}$. 
We obtain $\tilde{L}$ expressed in terms of the variable $\xi$ and $y$ by a coordinate transformation
\begin{align}
   \tilde{L}\tilde{v}(\xi,y)
      &=   \varepsilon^{-3}\tilde{v}_{\xi\xi\xi\xi}
         +2\varepsilon^{-1}\tilde{v}_{\xi\xi yy}
         +\varepsilon \tilde{v}_{yyyy}
         +\varepsilon^{-3}b_1\tilde{v}_{\xi\xi\xi}
         +\varepsilon^{-1}b_1\tilde{v}_{\xi yy}
         +\varepsilon^{-2}b_2\tilde{v}_{\xi\xi y}
         +b_2\tilde{v}_{yyy}\nonumber\\
         &\quad-\varepsilon^{-2}c\tilde{v}_{\xi\xi}
         -c\tilde{v}_{yy}\nonumber\\
     &= \varepsilon^{-3}(\tilde{v}_{\xi}+b_1\tilde{v})_{\xi\xi\xi}
       +\varepsilon^{-2}(b_2\tilde{v}_{y}-c\tilde{v})_{\xi\xi}
       +\varepsilon^{-1}(2\tilde{v}_{\xi}+b_1\tilde{v})_{\xi yy}
       +(b_2\tilde{v}_{y}-c\tilde{v})_{yy}
       +\varepsilon \tilde{v}_{yyyy}.\label{eq:ltilde}
\end{align}
Now the function $v$ is to correct the Neumann data. Thus, it has to fulfil the boundary conditions 
at $x=0$, that is $\xi=0$. Therefore, we consider the Neumann derivative in $x=0$ of the (formally) infinite 
sum \eqref{eq:ansatz2d} and make a comparison in the powers of $\varepsilon$. The last two sums are 
set to zero, because they do not have to correct anything. We get conditions on the functions $\psi_i$ 
and $\tilde{v}_i$, namely
\begin{equation}\label{eq:NeumCorrec}
   \psi_{i,x}(0,y) =- \tilde{v}_{i+1,\xi}(0,y)
\end{equation}
for correcting the contributions of $\psi_i$ to the Neumann data at $x=0$. The functions $\psi_i$ 
and $\tilde{v}_i$ act on different $\varepsilon$-levels because we consider the derivative of 
$\tilde{v}_i$ in $\xi$ and thus, we get an additional order of $\varepsilon$. Note that we do not 
correct the boundary conditions up to arbitrarily large values of $i$. We make use of this condition 
only for the considered finite number of indices. Additionally, the boundary correction functions 
are expected to be exponentially decaying away from the boundary. For this reason, we expand the domain of $\tilde{L}$ from $(0,\frac{1}{\varepsilon})\times (0,1)$ to  $(0,\infty)\times (0,1)$.

Now, the boundary condition \eqref{eq:NeumCorrec} shows that the contribution of $\psi_0$ will 
be corrected by $v_1$. Thus, $v_0$ has nothing to correct and therefore we set
\[
   \tilde{v}_0 = 0.
\]
For $\tilde{v}_1$ the comparison of powers of $\varepsilon$ in \eqref{eq:ltilde} with $\tilde{L}\tilde{v}=0$ yields
\begin{equation*}
   \begin{split}
      \tilde{v}_{1,\xi\xi\xi\xi}+b_1\tilde{v}_{1,\xi \xi\xi}=-b_2\tilde{v}_{0,\xi\xi y}+c\tilde{v}_{0,\xi\xi}&=0 \hspace*{2em}\mbox{ in }(0,\infty)\times(0,1),\\
      \tilde{v}_{1,\xi}(0,y)=-\psi_{0,x}(0,y) \mbox{ and } \lim_{\xi \rightarrow \infty} \tilde{v}_1(\xi,y)&=0.
   \end{split}
\end{equation*}
This problem has constant coefficients and no derivatives in $y$. 
Therefore, it can be solved explicitly and has the solution
\[
   \tilde{v}_1(\xi,y)=\frac{\psi_{0,x}(0,y)}{b_1}\exp\{-b_1\xi\} 
   \quad\text{ or }\quad
   v_1(x,y)=\frac{\psi_{0,x}(0,y)}{b_1}E_1(x).
\]
Analogously, we obtain the correction function $\tilde{w}_0=0$ and $\tilde{w}_1(x,\eta)$ 
with the stretched variable $\eta=\frac{y}{\varepsilon}$ for the layer along $y=0$:
\[
   \tilde{w}_1(x,\eta)=\frac{\psi_{0,y}(x,0)}{b_2}\exp\{-b_2\eta\} 
   \quad\text{ or }\quad
   w_1(x,y)=\frac{\psi_{0,y}(x,0)}{b_2}E_2(y).
\]
Both boundary correction functions $v_1$ and $w_1$ introduce non-zero Dirichlet and Neumann contributions 
at the outflow-boundary. We correct the Neumann data by a corner correction function $z_2$ and the Dirichlet 
data by $\psi_1$. For the corner-correction function we apply the stretching of the coordinates in both 
directions and obtain formally the operator
\[
   \bar{L}\bar{z}(\xi,\eta)
      =   \varepsilon^{-3}(\bar{z}_{\xi\xi\xi\xi}+2\bar{z}_{\xi\xi \eta \eta}+\bar{z}_{\eta \eta\eta\eta})
         +\varepsilon^{-3}(b_1(\bar{z}_{\xi\xi\xi}+\bar{z}_{\eta\eta\xi})+b_2(\bar{z}_{\xi\xi\eta}+\bar{z}_{\eta\eta\eta}))
         -\varepsilon^{-2}c(\bar{z}_{\xi\xi}+\bar{z}_{\eta\eta}).
\]
Again, the correction function is to correct the Neumann data of $v$ along $y=0$ ($\eta=0$)
and of $w$ along $x=0$ ($\xi=0$). Thus we have
\[
   \tilde{v}_{i,y}(\xi,0)=-\bar{z}_{i+1,\eta}(\xi,0)
   \quad\mbox{ and } \quad
   \tilde{w}_{i,x}(0,\eta)=-\bar{z}_{i+1,\xi}(0,\eta),
\]
respectively. This time we obtain $\bar z_0= \bar z_1= 0$ because they have nothing to correct.
The corner-correction function $\bar{z}_2$ satisfies
\begin{align*}
   \bar{\Delta}^2 \bar{z}_2 +(b\cdot \bar{\nabla})\bar{\Delta} \bar{z}_2
       = c\bar{\Delta} \bar{z}_1
      &=0\hspace*{6em}\mbox{in }(0,\infty)\times(0,\infty),\\
   \bar{z}_{2,\eta}(\xi,0)
       =-\tilde{v}_{1,y}(\xi,0) \mbox{ and }
   \bar{z}_{2,\xi}(0,\eta)
      &=-\tilde{w}_{1,x}(0,\eta),&\\
   \lim_{\xi \rightarrow \infty}\bar{z}_2(\xi,\eta)=0 \mbox{ and }
   \lim_{\eta \rightarrow \infty}\bar{z}_2(\xi,\eta)&=0.&
\end{align*}
A solution can be found to be 
\[
   \bar{z}_2(\xi,\eta)=-\frac{\psi_{0,xy}(0,0)}{b_1b_2}\exp\{-b_1\xi\}\exp\{-b_2\eta\}
   \quad\text{ or }\quad
   z_2(x,y)=-\frac{\psi_{0,xy}(0,0)}{b_1b_2}E_1(x)E_2(y).
\]

As said before, the boundary-correction functions $v_1$ and $w_1$ introduce non-neglectable 
contributions in the Dirichlet data on $\Gamma_+:=\{(x,y)\in \Gamma: -\boldb \cdot \boldsymbol{n}(x,y) >0\}$. 
This will be corrected in the next step by $\psi_1$ satisfying the reduced problem
\begin{alignat*}{2}
   (\boldsymbol{b}\cdot \nabla)\Delta \psi_1-c\Delta \psi_1&=-\Delta^2 \psi_0&&\hspace*{2em}\mbox{in }\Omega=(0,1)^2,\\
   \psi_1&=0 &&\hspace*{2em}\mbox{on }\Gamma_-,\\
   \partial_\boldn \psi_1(x,y)&=0 &&\hspace*{2em}\mbox{on }\Gamma_-,\\
   \psi_1(0,y)&=-v_1(0,y)&&\hspace*{2em} y\in (0,1), \\
   \psi_1(x,0)&=-w_1(x,0)&&\hspace*{2em} x\in(0,1).
\end{alignat*}
It can be checked that the conditions for existence of a unique solution, given in Proposition \ref{prop:214}, are fulfilled.

Now the construction of problems for $v_2$ and $w_2$ follows the same pattern as the construction for $v_1$ and $w_1$, respectively. We get
\begin{align*}
   \tilde{v}_2(\xi,y) &= \left(\frac{\psi_{1,x}(0,y)}{b_1}-\frac{\alpha(y)}{b_1^3}-\frac{\alpha(y) \xi}{b_1^2}\right)\exp\{-b_1\xi\}, \\
   \text{ that is }v_2(x,y)&=\left(\frac{\psi_{1,x}(0,y)}{b_1}-\frac{\alpha(y)}{b_1^3}-\frac{\alpha(y) x}{b_1^2 \varepsilon}\right)E_1(x)\\
   \mbox{with }\alpha(y)&=-b_2\psi_{0,xy}(0,y)+c\psi_{0,x}(0,y)
   \intertext{and}
   \tilde{w}_2(x,\eta) &= \left(\frac{\psi_{1,y}(x,0)}{b_2}-\frac{\beta(x)}{b_2^3}-\frac{\beta(x) \eta}{b_2^2}\right)\exp\{-b_2\eta\}, \\
   \text{ that is }w_2(x,y)&=\left(\frac{\psi_{1,y}(x,0)}{b_2}-\frac{\beta(x)}{b_2^3}-\frac{\beta(x) y}{b_2^2 \varepsilon}\right)E_2(y)\\
   \mbox{with }\beta(x)&=-b_1\psi_{0,xy}(x,0)+c\psi_{0,y}(x,0).
   \intertext{The function $\bar{z}_3$ has to fulfil the following problem:}
    \bar{\Delta}^2 \bar{z}_3 +(b\cdot \bar{\nabla})\bar{\Delta} \bar{z}_3
         =c\bar{\Delta} \bar{z}_2
        &=-\frac{c(b_1^2+b_2^2)}{b_1b_2} \psi_{0,xy}(0,0)\exp\{-b_1\xi\}\exp\{-b_2\eta\}&\mbox{in }(0,\infty)\times(0,\infty),\\
    \bar{z}_{3,\eta}(\xi,0)&=-\tilde{v}_{2,y}(\xi,0) 
    \quad\mbox{and}\quad
    \bar{z}_{3,\xi}(0,\eta) =-\tilde{w}_{2,x}(0,\eta),&\\
    \lim_{\xi \rightarrow \infty}\bar{z}_3(\xi,\eta)&=0 
    \quad\mbox{and}\quad
    \lim_{\eta \rightarrow \infty}\bar{z}_3(\xi,\eta)=0.&
\end{align*}
We make the ansatz
\[
   {z}_3(\xi,\eta)=(\omega_1+\omega_2 \xi + \omega_3 \eta) \exp\{-b_1 \xi \}\exp\{-b_2 \eta\}
\]
with unknown constants $\omega_1, \omega_2,\omega_3$.  With this ansatz, we get 
only a solution if the compatibility condition
\begin{equation}\label{eq:compcond1}
   (\boldb \cdot \nabla) \psi_{0,xy}(0,0)-c\psi_{0,xy}(0,0)=0
\end{equation}
holds true.
Then, the solution is given by
\[
   \bar{z}_3(\xi,\eta)=(\omega_1+\omega_2 \xi + \omega_3 \eta) \exp\{-b_1 \xi \}\exp\{-b_2 \eta\}
   \quad\text{ or }\quad
   z_3(x,y)=\left(\omega_1+\omega_2 \frac{x}{\varepsilon} + \omega_3 \frac{y}{\varepsilon}\right) E_1(x) E_2(y) 
\]
with
\begin{align*}
   \omega_1&=\frac{-b_1 b_2\psi_{1,xy}(0,0) +b_1\psi_{0,xyy}(0,0) + b_2\psi_{0,xxy}(0,0)}{b_1^2 b_2^2},\\
   \omega_2&=\frac{\psi_{0,xxy}(0,0)}{b_1 b_2}
   \quad\mbox{and}\quad
   \omega_3 =\frac{\psi_{0,xyy}(0,0)}{b_1 b_2}.
\end{align*}
Without the explicit ansatz for $z_3$ we also get several compatibility conditions, 
which follow from the differential equation itself and the boundary conditions, 
warranting $z_3$ to be continuous in the corner (0,0).

Now $z_2$ introduces non-neglectable contributions in the Dirichlet data on $\Gamma_+$. 
Thus for the next step, $\psi_2$ has to correct the Dirichlet data of $v_2,w_2$ and $z_2$. 
We consider the equation
\begin{align*}
   (\boldsymbol{b}\cdot \nabla)\Delta \psi_2-c\Delta \psi_2&=-\Delta^2 \psi_1&&\hspace*{2em}\mbox{in }\Omega=(0,1)^2,\\
   \psi_2&=0 &&\hspace*{2em}\mbox{on }\Gamma_-,\\
   \partial_\boldn \psi_2&=0 &&\hspace*{2em}\mbox{on }\Gamma_-,\\
   \psi_2(0,y)&=-v_2(0,y)-w_2(0,y)-z_2(0,y)&&\hspace*{2em} y\in (0,1),\\
   \psi_2(x,0)&=-v_2(x,0)-w_2(x,0)-z_2(x,0)&&\hspace*{2em} x\in (0,1).
\end{align*}
We again have to check the conditions of Proposition \ref{prop:214}. 
Therefore, we have to verify, whether the following equations hold
\begin{subequations}
   \begin{align}
      \label{eq:cond1}-v_2(0,1)-w_2(0,1)-z_2(0,1)&=0,\\
      \label{eq:cond2}-v_2(1,0)-w_2(1,0)-z_2(1,0)&=0,\\
      \label{eq:cond3}-v_{2,y}(0,1)-w_{2,y}(0,1)-z_{2,y}(0,1)&=0,\\
      \label{eq:cond4}-v_{2,x}(1,0)-w_{2,x}(1,0)-z_{2,x}(1,0)&=0.
   \end{align}
\end{subequations}
We begin with equation \eqref{eq:cond1}:
\begin{align*}
   v_2(0,1)&+w_2(0,1)+z_2(0,1)\\
   &=\frac{\psi_{1,x}(0,1)}{b_1}-\frac{\alpha(1)}{b_1^3}+\left(\frac{\psi_{1,y}(0,0)}{b_2}-\frac{\beta(0)}{b_2^3}-\frac{\beta(0)}{b_2^2 \varepsilon}\right)E_2(1)-\frac{\psi_{0,xy}(0,0)}{b_1 b_2}E_2(1)\\
   &=\left( -\frac{2}{b_1 b_2}+\frac{b_1 }{b_2^3}+\frac{b_1 }{b_2^2 \varepsilon}\right)\psi_{0,xy}(0,0)E_2(1)
    =0.
\end{align*}
This is zero, if 
\begin{equation}\label{eq:compcond2}
   \psi_{0,xy}(0,0)=0.
\end{equation} 
Equation \eqref{eq:cond2} yields
\begin{align*}
   v_2(1,0)&+w_2(1,0)+z_2(1,0)\\
   &=\left(\frac{\psi_{1,x}(0,0)}{b_1}-\frac{\alpha(0)}{b_1^3}-\frac{\alpha(0)}{b_1^2 \varepsilon}\right)E_1(1)+\frac{\psi_{1,y}(1,0)}{b_2}-\frac{\beta(1)}{b_2^3}-\frac{\psi_{0,xy}(0,0)}{b_1 b_2}E_1(1)\\
   &=\left( -\frac{2}{b_1 b_2}+\frac{b_2}{b_1^3}+\frac{b_2}{b_1^2 \varepsilon} \right)\psi_{0,xy}(0,0)E_1(1)
    =0
\end{align*}
and also gives condition \eqref{eq:compcond2}.
Let us continue with equation \eqref{eq:cond3}:
\begin{align*}
   v_{2,y}(0,1)&+w_{2,y}(0,1)+z_{2,y}(0,1)\\
   &=\frac{\psi_{1,xy}(0,1)}{b_1}-\frac{\alpha_y(1)}{b_1^3}+\left(\frac{-b_2}{\varepsilon}\left(\frac{\psi_{1,y}(0,0)}{b_2}-\frac{\beta(0)}{b_2^3}-\frac{\beta(0)}{b_2^2 \varepsilon}\right) -\frac{\beta(0)}{b_2^2 \varepsilon}\right)E_2(1)+\frac{\psi_{0,xy}(0,0)}{b_1 }E_2(1)\\
   &\overset{\eqref{eq:compcond2}}{\underset{}{=}}\frac{\psi_{1,xy}(0,1)}{b_1}-\frac{\alpha_y(1)}{b_1^3}- \frac{\psi_{1,y}(0,0)}{\varepsilon} E_2(1)\\
   &=\frac{b_2 \psi_{0,xyy}(0,1)}{b_1^3}
    =0.
\end{align*}
Analogously, we get for equation \eqref{eq:cond4}
\[
   v_{2,x}(1,0)+w_{2,x}(1,0)+z_{2,x}(1,0)
      = \frac{b_1 \psi_{0,xxy}(1,0)}{b_2^3}
      = 0.
\]
Thus we obtain the additional conditions
\begin{equation}
   \psi_{0,xyy}(0,1)=0
   \quad\mbox{and}\quad
   \psi_{0,xxy}(1,0)=0.\label{eq:compcond3} 
\end{equation}
If the conditions \eqref{eq:compcond2} and \eqref{eq:compcond3} are violated, Proposition \ref{prop:214} is not applicable. Hence, it is unclear, whether a solution $\psi_2$ exists. Condition \eqref{eq:compcond2} implies that $z_2=0$. 
\begin{remark}From the conditions  \eqref{eq:compcond3} we get conditions on $f$, namely 
   $f(0,1)=0$ and $f(1,0)=0$. This follows by extension of the differential equation for $\psi_0$ 
   to the boundary.
   Furthermore, if $b_1=b_2$, condition \eqref{eq:compcond1} implies that $f(0,0)=0$.
\end{remark}
The construction of $v_3$ and $w_3$ follows the same pattern as those for $v_1, v_2, w_1$ and $w_2$. 
The structure of the solutions is of the form 
\begin{align*}
   \tilde{v}_3(\xi,y)=\widetilde{P}_2(\xi,y)\exp\{-b_1\xi\}   
   &\quad\text{ or }\quad 
   v_3(x,y)=\widetilde{P}_2\left(\frac{x}{\varepsilon},y\right)E_1(x),\\
   \tilde{w}_3(x,\eta)=\widetilde{Q}_2(x,\eta)\exp\{-b_2\eta\}
   &\quad\text{ or }\quad
   w_3(x,y)=\widetilde{Q}_2\left(x,\frac{y}{\varepsilon}\right)E_2(y),
\end{align*}
where $\widetilde{P}_2$ is a polynomial of second order in the first variable and $\widetilde{Q}_2$ 
a polynomial of second order in the second variable.
To continue the expansion with higher order terms of $\epsilon$ in the same manner results in more compatibility conditions. So, we stop the classical asymptotic expansion here. We add functions $\tilde{v}_4,\tilde{w}_4$ and $\bar{z}_4$ not correcting the boundary layers, but modifying the right hand-side of the residual. 
We want them to satisfy:
\begin{align*}
   \tilde{v}_{4,\xi\xi\xi\xi}+b_1\tilde{v}_{4,\xi\xi\xi}
    &= -b_2\tilde{v}_{3,\xi\xi y}+c\tilde{v}_{3,\xi\xi}-2\tilde{v}_{2,\xi\xi y y}-b_1\tilde{v}_{2,\xi y y}-b_2 \tilde{v}_{1,yyy}+c\tilde{v}_{1,yy},
     \quad \mbox{in }(0,\infty)\times(0,1),\\
   \tilde{v}_{4,\xi}(0,y) &= 0,\,\mbox{in }(0,1),\\
   \lim_{\xi \rightarrow \infty}\tilde{v}_4(\xi,y) &= 0,\,\mbox{in }(0,1),\\
   \tilde{w}_{4,\eta\eta\eta\eta}+b_2\tilde{w}_{4,\eta\eta\eta}
    &= -b_1\tilde{w}_{3,x\eta\eta}+c\tilde{w}_{3,\eta\eta}-2\tilde{w}_{2,x x \eta \eta}-b_2\tilde{w}_{2,x x \eta}-b_1 \tilde{w}_{1,xxx}+c\tilde{w}_{1,xx},
     \quad\mbox{in }(0,1)\times(0,\infty),\\
   \tilde{w}_{4,\eta}(x,0) &= 0,\,\mbox{in }(0,1),\\
   \lim_{\eta \rightarrow \infty}\tilde{w}_4(x,\eta)&=0,\,\mbox{in }(0,1),\\
 \bar{\Delta}^2\bar{z}_4+(b\cdot \bar{\nabla})\bar{\Delta}\bar{z}_4&=c\bar{\Delta}\bar{z}_3.
\end{align*}
The boundary data of $\bar{z}_4$ only have to be bounded by a constant and need not to correct other data, 
thus we have some freedem in the choice for $\bar{z}_4$. We get
\begin{align*}
   \tilde{v}_4(\xi,y)=\widetilde{P}_3(\xi,y)\exp\{-b_1\xi\} 
   &\quad\text{ or }\quad
   v_3(x,y)=\widetilde{P}_3\left(\frac{x}{\varepsilon},y\right)E_1(x),\\
   \tilde{w}_4(x,\eta)=\widetilde{Q}_3(x,\eta)\exp\{-b_2\eta\}
   &\quad\text{ or }\quad
   w_3(x,y)=\widetilde{Q}_3\left(x,\frac{y}{\varepsilon}\right)E_2(y),
\end{align*}
where $\tilde{P}_3$ is a polynomial of third order in the first variable and $\tilde{Q}_3$ a polynomial 
of third order in the second variable. The function $z_4$ is of the structure
\[
   \bar{z}_4(\xi,\eta)=\bar{R}_2(\xi,\eta)\exp\{-b_1\xi\}\exp\{-b_2\eta\}
   \quad\text{ or }\quad
   z_4(x,y)=\bar{R}_2\left(\frac{x}{\varepsilon},\frac{y}{\varepsilon}\right)E_1(x)E_2(y),
\]
where $\bar{R}$ is a polynomial of second order in both variables.

\subsection{Estimating the residual}
Consider now the approximation $\Psi$ of $\psi$ given by
\[
   \Psi=\sum_{i=0}^2 \varepsilon^i \psi_i+\sum_{i=1}^4 \varepsilon^i v_i +\sum_{i=1}^4  \varepsilon^i w_i +\sum_{i=2}^4 \varepsilon^i z_i,
\]
with the functions given before. Furthermore, let $R=\psi-\Psi$ be the residual of the solution of 
problem \eqref{eq:fourthorder} and its approximation $\Psi$. We now estimate the residual $R$ with 
the help of Theorem \ref{th:stab2d}. Let us start with some lemmas. In all the lemmas, it is implicitly 
assumed that the compatibility conditions \eqref{eq:compcond1}, \eqref{eq:compcond2} and \eqref{eq:compcond3} 
are fulfilled. We also recall our standing assumption on the solutions to be sufficiently smooth.
\begin{lemma}\label{lem:LR}
   For the residual $R$ holds
   \begin{equation}\label{eq:L2_LR}
      \|LR\|_{L^2(\Omega)}\leq C\varepsilon^{5/2}.
   \end{equation}
\end{lemma}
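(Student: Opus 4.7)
The plan is to exploit the construction of $\Psi$, which was designed precisely so that applying $L$ and collecting terms by powers of $\varepsilon$ produces cancellations up to a prescribed order. Since $L\psi=f$, we have $LR = f - L\Psi$, so the task is to show $\|f - L\Psi\|_{L^2(\Omega)}\le C\varepsilon^{5/2}$. I would split $L\Psi$ according to the four building blocks of $\Psi$ and estimate each piece separately:
\[
   L\Psi = L\!\Bigl(\sum_{i=0}^2 \varepsilon^i\psi_i\Bigr) + L\!\Bigl(\sum_{i=1}^4\varepsilon^i v_i\Bigr) + L\!\Bigl(\sum_{i=1}^4\varepsilon^i w_i\Bigr) + L\!\Bigl(\sum_{i=2}^4\varepsilon^i z_i\Bigr).
\]

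For the interior expansion I would use the defining equations $(\boldb\cdot\nabla)\Delta\psi_0-c\Delta\psi_0=f$, $(\boldb\cdot\nabla)\Delta\psi_1-c\Delta\psi_1=-\Delta^2\psi_0$, $(\boldb\cdot\nabla)\Delta\psi_2-c\Delta\psi_2=-\Delta^2\psi_1$ to telescope, producing $L(\psi_0+\varepsilon\psi_1+\varepsilon^2\psi_2)=f+\varepsilon^3\Delta^2\psi_2$. Since the compatibility conditions guarantee $\psi_0,\psi_1,\psi_2\in C^4(\bar\Omega)$, this contributes a term of order $\varepsilon^3$ in the $L^2$-norm, which is already better than $\varepsilon^{5/2}$.

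The main work goes into the layer sums. For $v$, I would pass to the stretched variable $\xi=x/\varepsilon$ and plug $\sum_{i=1}^4\varepsilon^i \tilde v_i$ into the expression \eqref{eq:ltilde} for $\tilde L$. Collecting contributions to each power $\varepsilon^k$ ($k\in\{-2,-1,0,1,2,\ldots,5\}$), the defining ODE $\tilde v_{1,\xi}+b_1\tilde v_1=0$ kills the $\varepsilon^{-2}$-level, the equation for $\tilde v_2$ kills the $\varepsilon^{-1}$-level, the equation for $\tilde v_3$ kills the $\varepsilon^{0}$-level, and the specially designed equation for $\tilde v_4$ kills the $\varepsilon^{1}$-level. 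What remains are terms starting at order $\varepsilon^2$, each of the form (polynomial in $\xi$ with $y$-dependent, bounded coefficients)$\,\cdot\,e^{-b_1\xi}$, i.e.\ bounded functions (independent of $\varepsilon$) multiplied by $E_1(x)$. Using
\[
    \|E_1\|_{L^2(\Omega)}^2=\int_0^1\!\!\int_0^1 e^{-2b_1 x/\varepsilon}\,dx\,dy\le \frac{\varepsilon}{2b_1},
\]
this yields $\|L(\sum_{i=1}^4\varepsilon^i v_i)\|_{L^2(\Omega)}\le C\varepsilon^2\cdot\varepsilon^{1/2}=C\varepsilon^{5/2}$. The $w$-sum is handled identically after swapping the roles of $x,y$ and invoking $\|E_2\|_{L^2(\Omega)}\le C\varepsilon^{1/2}$. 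For the corner sum $\sum_{i=2}^4\varepsilon^i z_i$, an analogous bookkeeping in the doubly stretched variables $(\xi,\eta)$, using the defining equations for $\bar z_2,\bar z_3,\bar z_4$, leaves a remainder whose $L^2$-norm is controlled by $\|E_1 E_2\|_{L^2(\Omega)}\le C\varepsilon$, hence of strictly smaller order.

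The main obstacle is not any single estimate but the systematic bookkeeping: one must verify that the polynomials-in-$\xi$ coefficients that appear at levels $\varepsilon^2,\varepsilon^3,\varepsilon^4,\varepsilon^5$ really do have $L^\infty$-coefficients bounded independently of $\varepsilon$ (so that the exponential factor is the sole source of smallness), and that the compatibility conditions \eqref{eq:compcond1}, \eqref{eq:compcond2}, \eqref{eq:compcond3} are used wherever needed to guarantee the regularity of $\psi_0,\psi_1,\psi_2$ used in the construction of $v_i,w_i,z_i$. Once these bounds are in hand, summing the four contributions yields $\|LR\|_{L^2(\Omega)}\le C\varepsilon^{5/2}$ as claimed.
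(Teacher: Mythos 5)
Your proposal follows essentially the same route as the paper's proof: split $L\Psi$ into the interior, the two boundary-layer, and the corner sums, telescope the reduced equations to leave $\varepsilon^3\Delta^2\psi_2$, cancel the levels $\varepsilon^{-2},\dots,\varepsilon^{1}$ of $\tilde L$ in stretched variables using the defining equations of $\tilde v_1,\dots,\tilde v_4$ (and likewise for $\tilde w_i$, $\bar z_i$), and bound the remaining $\varepsilon^2$-terms via $\|E_1\|_{L^2}\leq C\varepsilon^{1/2}$. The argument is correct; you even make explicit the final $L^2$-estimate of the exponentials that the paper leaves implicit (the only slight inaccuracy — that the compatibility conditions supply the $C^4$-regularity, whereas the paper simply assumes sufficient smoothness and uses the compatibility conditions for the existence of $\psi_2$ — is immaterial here).
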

\begin{proof}
   Let us start by computing
   \begin{align}
    LR&= L(\psi-\psi_0-\varepsilon\psi_1-\varepsilon^2\psi_2-\varepsilon v_1-\varepsilon^2 v_2-\varepsilon^3 v_3-\varepsilon^4 v_4-\varepsilon w_1-\varepsilon^2 w_2-\varepsilon^3 w_3-\varepsilon^4 w_4\notag\\
    &\quad-\varepsilon^2 z_2-\varepsilon^3 z_3-\varepsilon^4 z_4)\notag\\
      &= L(\psi-\psi_0-\varepsilon\psi_1-\varepsilon^2\psi_2)-L(\varepsilon v_1+\varepsilon^2 v_2+\varepsilon^3 v_3+\varepsilon^4 v_4)\notag\\
       &\quad -L(\varepsilon w_1+\varepsilon^2 w_2+\varepsilon^3 w_3+\varepsilon^4 w_4)
    -L(\varepsilon^2 z_2+\varepsilon^3 z_3+\varepsilon^4 z_4).\label{eq:LR}
   \end{align}
   We obtain for the first term of \eqref{eq:LR}
   \begin{align*}
    L(\psi-\psi_0-\varepsilon\psi_1-\varepsilon^2 \psi_2)
     &= f-f+\varepsilon \Delta^2 \psi_0-\varepsilon \Delta^2 \psi_0+\varepsilon^2 \Delta^2 \psi_1-\varepsilon^2\Delta^2\psi_1+\varepsilon^3 \Delta^2 \psi_2\\
     &= \varepsilon^3 \Delta^2 \psi_2.
   \end{align*}
   For the second term of \eqref{eq:LR} we have
   \begin{align*}
   L(\varepsilon v_1+\varepsilon^2 v_2+\varepsilon^3 v_3+\varepsilon^4 v_4)
    &=\tilde{L}(\varepsilon\tilde{v}_1+\varepsilon^2\tilde{v}_2+\varepsilon^3\tilde{v}_3+\varepsilon^4 \tilde{v}_4)\\
    &=\varepsilon^2\left((\tilde{v}_{1,yy}+b_2\tilde{v}_{2,y}-c\tilde{v}_{2}+2\tilde{v}_{3,\xi\xi}+b_1\tilde{v}_{3,\xi})_{yy}+(b_2\tilde{v}_{4,y}-c\tilde{v}_4)_{\xi\xi}\right)\\
    &\quad
           +\varepsilon^3(\tilde{v}_{2,yy}+b_2\tilde{v}_{3,y}-c\tilde{v}_{3}+2\tilde{v}_{4,\xi\xi}+b_1\tilde{v}_{4,\xi})_{yy}\\
           &\quad
           +\varepsilon^4(\tilde{v}_{3,yy}+b_2\tilde{v}_{4,y}-c\tilde{v}_4)_{yy}+\varepsilon^5 \tilde{v}_{4,yyyy}.
   \end{align*}
   The third term of \eqref{eq:LR} can be rewritten analogously.
   Finally, the remaining term in \eqref{eq:LR} is
   \begin{align*}
    L(\varepsilon^2 z_2+\varepsilon^3 z_3+\varepsilon^4 z_4)
      &= \bar{L}(\varepsilon^2 \bar{z_2}+\varepsilon^3 \bar{z}_3+\varepsilon^4 \bar{z}_4)\\
      &= -\varepsilon^2 c \bar{\Delta} \bar{z}_4.
   \end{align*}
 Thus, we obtain with some functions $g_1,\,g_2,\,g_3$ that are bounded independently of $\eps$
   \[
    LR(x,y) = \eps^3\Delta^2\psi_2(x,y)+\eps^2(g_1(y)E_1(x)+g_2(x)E_2(y)+g_3(x,y)E_1(x)E_2(y))\quad(0\leq x,y\leq 1),
   \]
   which yields \eqref{eq:L2_LR}. 
\end{proof}
\begin{lemma}\label{lem:Rrand}
   The residual $R$ can be bounded on the boundary $\Gamma=\partial \Omega$ by
   \begin{equation}\label{eq:linftyR}
      \|R\|_{L^\infty(\Gamma)}\leq C \varepsilon^3.
   \end{equation}
   Its tangential derivative $\partial_t$ is bounded on $\partial \Omega$ by
   \begin{equation}\label{eq:linftyRt}
      \|\partial_t R\|_{L^\infty(\Gamma)}\leq C \varepsilon^2.
   \end{equation}
\end{lemma}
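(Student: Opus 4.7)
The plan is to exploit the fact that $\psi\equiv 0$ and $\partial_\boldn\psi\equiv 0$ on $\Gamma$, so that $R|_\Gamma=-\Psi|_\Gamma$ and $\partial_t R|_\Gamma=-\partial_t\Psi|_\Gamma$. I would then evaluate the explicit expression for $\Psi$ on each of the four sides of the unit square separately and use (i) the interfacing Dirichlet data built into the definitions of $\psi_1,\psi_2$, (ii) the Dirichlet and Neumann data satisfied by $\psi_0,\psi_1,\psi_2$, and (iii) the corner compatibility conditions \eqref{eq:compcond1}--\eqref{eq:compcond3}, to show that every contribution of order $\eps^0,\eps^1,\eps^2$ cancels.

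On the outflow sides $x=0$ and $y=0$ the construction enforces $\psi_1(0,y)=-v_1(0,y)$ and $\psi_2(0,y)=-(v_2+w_2+z_2)(0,y)$, which together annihilate all $O(1)$--$O(\eps^2)$ terms except $\eps w_1(0,y)=\tfrac{\eps}{b_2}\psi_{0,y}(0,0)E_2(y)$. This remaining term vanishes because $\psi_0\equiv 0$ along $x=0$ forces $\psi_{0,y}(0,0)=0$. What is left on $x=0$ is $\eps^3(v_3+w_3+z_3)(0,y)+\eps^4(v_4+w_4+z_4)(0,y)$; since every one of these functions is the product of a polynomial in the stretched variables $x/\eps,y/\eps$ with the decaying factors $E_1,E_2$, their $L^\infty$-norms are bounded uniformly in $\eps$, which gives $\|\Psi(0,\cdot)\|_{L^\infty}\leq C\eps^3$. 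The edge $y=0$ is treated symmetrically.

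On the inflow sides $x=1$ and $y=1$ one has $\psi_i(1,y)=0$ for every $i$, while $v_i(1,y)$ and $z_i(1,y)$ carry the exponentially small factor $E_1(1)=e^{-b_1/\eps}$. The only potentially large contributions are the $\eps^i w_i(1,y)$. Using the Dirichlet condition $\psi_0(1,y)=0$, the Neumann condition $\psi_{0,x}(1,y)=0$ on this inflow edge, and $\psi_1(1,y)=0$, I would verify that $\psi_{0,y}(1,0)=\psi_{0,xy}(1,0)=\psi_{1,y}(1,0)=0$, hence $\beta(1)=0$ and therefore $w_1(1,y)=w_2(1,y)=0$. This leaves $\Psi(1,y)=\eps^3 w_3(1,y)+\eps^4 w_4(1,y)+O(e^{-b_1/\eps})$, and $y=1$ is symmetric, settling \eqref{eq:linftyR}.

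For \eqref{eq:linftyRt} I would use the same cancellations, but now read as identities in the tangential variable rather than pointwise; differentiating them tangentially expresses $\partial_t\Psi$ as a sum of terms of the type $\eps^3\partial_t w_3(0,y)$ on $x=0$ and its analogues. Since tangential differentiation of the factor $E_2(y)$ introduces a single $1/\eps$, the combination $\eps^3\cdot(1/\eps)$ yields the claimed $O(\eps^2)$ bound, and the exponentially small contributions on $x=1,y=1$ stay exponentially small. The \textbf{main obstacle} is a patient bookkeeping exercise: verifying, term by term, that the combined Dirichlet--Neumann data for $\psi_0,\psi_1$ together with the compatibility conditions \eqref{eq:compcond1}--\eqref{eq:compcond3} really do force the quantities $\psi_{0,y}(0,0)$, $\psi_{0,x}(0,0)$, $\alpha(0),\alpha(1),\beta(0),\beta(1)$ and their symmetric counterparts to vanish, so that the residual on $\Gamma$ indeed drops to the announced order $\eps^3$.
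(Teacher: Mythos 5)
Your proposal is correct and follows essentially the same route as the paper's proof: edge-by-edge evaluation of $R|_\Gamma=-\Psi|_\Gamma$, cancellation of the $O(1)$--$O(\eps^2)$ terms via the Dirichlet data built into $\psi_1,\psi_2$ and the homogeneous boundary data of $\psi_0,\psi_1,\psi_2$ (plus \eqref{eq:compcond2}, \eqref{eq:compcond3}), exponential smallness of $E_1(1)$, uniform boundedness of polynomial-times-exponential terms, and the loss of one power of $\eps$ from tangentially differentiating $E_2(y)$. The remaining bookkeeping you flag is exactly what the paper carries out, so no gap remains.
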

\begin{proof}
   We will give details only for some of the terms occuring in the expression for $R$ as the estimates for the others follow the same rationale. Let us start out with
   \begin{align*}
      R(1,y)&=\psi(1,y)-\psi_0(1,y)-\varepsilon \psi_1(1,y)-\varepsilon^2 \psi_2(1,y)\\
      &\quad-\varepsilon (v_1(1,y)+w_1(1,y))-\varepsilon^2(v_2(1,y)+w_2(1,y)+z_2(1,y))\\
      &\quad -\varepsilon^3(v_3(1,y)+w_3(1,y)+z_3(1,y))-\varepsilon^4(v_4(1,y)+w_4(1,y)+z_4(1,y))
   \end{align*}
   The first four terms of $R(1,y)$ are zero, because these functions fulfil homogeneous Dirichlet boundary conditions. 

   The correction terms with index 1 yield
   \[
      v_1(1,y)+w_1(1,y)
         =\frac{\psi_{0,x}(0,y)}{b_1}E_1(1)+\frac{\psi_{0,y}(1,0)}{b_2}E_2(0)
         =\frac{\psi_{0,x}(0,y)}{b_1}E_1(1)
   \]
   because $\psi_{0,y}(1,0)=0$ due to homogeneous Dirichlet boundary conditions of $\psi_0$. 
   The factor $E_1(1)$ is exponentially small in $\varepsilon$ and can therefore be 
   bounded by an arbitrary power of $\varepsilon$. For the corrections with index 2 we have
   \[
      w_2(1,y)
         =\left(\frac{\psi_{1,y}(1,0)}{b_2}-\frac{\beta(1)}{b_2^3}-\frac{\beta(1) y}{b_2^2 \varepsilon}\right)E_2(y)
         =0
   \]
   because $\beta(1)=0$ due to $\psi_{0,xy}(1,0)=0$ and $\psi_{1,y}(1,0)=0$ 
   due to homogeneous Dirichlet boundary conditions of $\psi_1$, and
   \[
      v_2(1,y)+z_2(1,y)
         =v_2(1,y)
         =\left(\frac{\psi_{1,x}(0,y)}{b_1}-\frac{\alpha(y)}{b_1^3}-\frac{\alpha(y)}{b_1^2 \varepsilon}\right)E_1(1).
   \]
   We obtain for the corrections with index 3
   \[
      v_3(1,y)+z_3(1,y)
         =\widetilde{P}_2\left(x=\frac{1}{\varepsilon},y\right)E_1(1)+
          \bar{R}_1\left(x=\frac{1}{\varepsilon},\frac{y}{\varepsilon}\right)E_1(1)E_2(y).
   \]
   Finally,
   \[
      w_3(1,y)=\widetilde{Q}_2\left(1,\frac{y}{\varepsilon}\right)E_2(y)
   \]
   is bounded by a constant due to $P_2(t)\e^{-t}\leq C$ 
   for all $t$, where $P_2$ is a polynomial of order $2$. 
   The corrections with index 4 contribute similar bounds as an analogous argument shows. Combining the terms discussed so far, we obtain with some function $g$, bounded 
   independently of $\varepsilon$, and polynomials $P_2, P_3,$
   \[
      R(1,y)=\varepsilon g(y) E_1(1)+\left(\varepsilon^3 P_2\left(\frac{y}{\varepsilon}\right)+
             \varepsilon^4 P_3\left(\frac{y}{\varepsilon}\right)\right)E_2(y), \quad (0\leq y\leq 1).
   \]
   Thus, the $L^\infty$-norm over this part of the boundary is bounded by
   \[
      \|R(1,\cdot)\|_{L^\infty(0,1)}\leq C \varepsilon^3.
   \]
   Let us take a look at the outflow boundary at $x=0$. Here, we do not have an arbitrarily 
   small factor $E_1(1)$ in our estimates. We start with
   \begin{align*}
      R(0,y)
         &=\psi(0,y)-\psi_0(0,y)-\varepsilon \psi_1(0,y)-\varepsilon^2 \psi_2(0,y)\\
         &\quad-\varepsilon (v_1(0,y)+w_1(0,y))-\varepsilon^2(v_2(0,y)+w_2(0,y)+z_2(0,y))\\
         &\quad -\varepsilon^3(v_3(0,y)+w_3(0,y)+z_3(0,y))-\varepsilon^4(v_4(0,y)+w_4(0,y)+z_4(0,y)).
   \end{align*}
   The first two terms are zero due to the imposed Dirichlet conditions. 
   By definition of $\psi_1$ and $\psi_2$ we have
   \begin{align*}
      \psi_1(0,y)&=-v_1(0,y),\\
      \psi_2(0,y)&=-v_2(0,y)-w_2(0,y)-z_2(0,y).
   \end{align*}
   Employing the fact that $w_1(0,y)$ is zero since $\psi_0$ satisfies homogeneous Dirichlet conditions and, hence, $\psi_{0,y}(0,0)=0$, we get
   \begin{align*}
      R(0,y)
         &= -\varepsilon^3(v_3(0,y)+w_3(0,y)+z_3(0,y))-\varepsilon^4(v_4(0,y)+w_4(0,y)+z_4(0,y)).
   \end{align*}
  We obtain with the same methods as above 
   for some function $h$ bounded independently of $\varepsilon$ and polynomials $P_2, P_3$,
   \[
      R(0,y)=\varepsilon^3\left(h(y)+\left(P_2\left(\frac{y}{\varepsilon}\right)+ 
             \varepsilon P_3\left(\frac{y}{\varepsilon}\right)\right)E_2(y)\right)\quad(0\leq y\leq 1).
   \]
   Thus, the $L^\infty$-norm of this part of the boundary is also bounded by
   \[
      \|R(0,\cdot)\|_{L^\infty(0,1)}\leq C \varepsilon^3.
   \]
   On the remaining boundaries we use similar ideas and eventually obtain \eqref{eq:linftyR}.

   For the tangential derivative we estimate $R_y(0,y)$ and $R_y(1,y)$: We obtain
   \[
      \|R_y(0,\cdot)\|_{L^\infty(0,1)}\leq C \varepsilon^2
      \quad \mbox{and} \quad 
      \|R_y(1,\cdot)\|_{L^\infty(0,1)}\leq C \varepsilon^2
   \]
   due to $E_{2,y}(y)=-\frac{b_2}{\varepsilon}E_2(y)$. On the other boundaries we apply 
   the same techniques and conclude \eqref{eq:linftyRt}.
\end{proof}
\begin{lemma}\label{lem:Rnormrand}
   The normal derivative of $R$ on the boundary $\partial \Omega$ can be estimated by
   \begin{equation}\label{eq:normR}
      \|\partial_\boldn R\|_{L^\infty(\Gamma)}\leq C \varepsilon^3 \mbox{ and }
      \|\partial_\boldn R\|_{L^2(\Gamma)}\leq C \varepsilon^{7/2}.
   \end{equation}
\end{lemma}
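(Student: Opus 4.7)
The plan is to mirror the proof of Lemma \ref{lem:Rrand}, now estimating $\partial_\boldn R$ instead of $R$ on each edge of $\partial\Omega$. Since $\psi$ satisfies $\partial_\boldn \psi=0$ on $\Gamma$ by \eqref{eq:fourthorder}, we have $\partial_\boldn R=-\partial_\boldn \Psi$, so the task reduces to showing that the construction of $\Psi$ leaves only contributions of order $\varepsilon^3$ (in $L^\infty$) and $\varepsilon^{7/2}$ (in $L^2$) in the normal derivative on each of the four edges of $\Omega$.

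On the outflow edge $\{0\}\times(0,1)$, where $\partial_\boldn=-\partial_x$, I would use the Neumann-correction identities built into the construction: relation \eqref{eq:NeumCorrec} gives $\varepsilon \partial_x v_{i+1}(0,y)=-\partial_x\psi_i(0,y)$ for $i=0,1,2$; its corner analogue gives $\varepsilon\partial_x z_{i+1}(0,y)=-\partial_x w_i(0,y)$ for $i=1,2,3$; and the defining problem for $\tilde v_4$ delivers $\partial_x v_4(0,y)=0$. These telescope, so that only $\varepsilon^4\partial_x w_4(0,y)$ survives in $\partial_x\Psi(0,y)$. Since $\partial_x w_4(0,y)$ has the form (polynomial in $y/\varepsilon$) times $E_2(y)$, this is $O(\varepsilon^4)$ in $L^\infty$ along the edge, even stronger than needed. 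The outflow edge $(0,1)\times\{0\}$ is treated symmetrically.

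On the inflow edge $\{1\}\times(0,1)$, where $\partial_\boldn=\partial_x$, the terms $\partial_x\psi_i(1,y)$ vanish for $i=0,1,2$ since $\{x=1\}\subset\Gamma_-$, and the terms $\partial_x v_i(1,y)$ and $\partial_x z_i(1,y)$ are each proportional to $E_1(1)$, hence smaller than any algebraic power of $\varepsilon$. The only non-trivial contribution is $\sum_{i=1}^{4}\varepsilon^i\partial_x w_i(1,y)$. Reasoning exactly as in the vanishing of $w_2(1,y)$ in Lemma \ref{lem:Rrand}, one uses the Dirichlet condition $\psi_i(1,y)\equiv 0$ (which propagates $\psi_{i,y}(1,0)=\psi_{i,yy}(1,0)=\dots=0$) together with the compatibility conditions \eqref{eq:compcond1}, \eqref{eq:compcond2}, \eqref{eq:compcond3} to verify that the low-order coefficients of $\tilde w_{i,x}(1,\eta)$ vanish for $i=1,2,3$. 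What remains is of the form $\varepsilon^k P(y/\varepsilon)E_2(y)$ with $k\geq 3$ and $P$ polynomial; the edge $(0,1)\times\{1\}$ is handled in the same way with the roles of $v$ and $w$ exchanged.

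Combining the four edges yields $\|\partial_\boldn R\|_{L^\infty(\Gamma)}\leq C\varepsilon^3$. The $L^2$ estimate then follows immediately: on the outflow edges the pointwise bound $C\varepsilon^4$ dominates, while on the inflow edges the surviving remainders have the shape $\varepsilon^3 P(\cdot/\varepsilon)E_j(\cdot)$, and $\|E_j\|_{L^2(0,1)}\leq C\varepsilon^{1/2}$ gives the improved bound $C\varepsilon^{7/2}$. The main obstacle, as in Lemma \ref{lem:Rrand}, is the bookkeeping of the Neumann-matching and corner-matching relations across different scaling regimes and verifying that the stated compatibility conditions genuinely suffice to kill every term of order strictly less than $\varepsilon^3$ in the normal derivative on the inflow edges.
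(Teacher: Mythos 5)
Your overall strategy is the paper's: reduce to $\partial_\boldn R=-\partial_\boldn\Psi$, go edge by edge, use the Neumann-matching relations and the exponential smallness of $E_1(1)$ on the far edges, and convert $\varepsilon^3 P(y/\varepsilon)E_2(y)$-type remainders into the $L^2$ bound via $\|E_2\|_{L^2(0,1)}\leq C\varepsilon^{1/2}$. The inflow-edge discussion matches the paper (there, $w_{1,x}(1,\cdot)=0$ from the Neumann condition of $\psi_0$ on $\Gamma_-$ and $w_{2,x}(1,\cdot)=0$ from \eqref{eq:compcond3}, while $\varepsilon^3 w_{3,x}(1,\cdot)$ survives but is already of the admissible form).

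There is, however, a concrete error in your outflow-edge bookkeeping. You invoke the corner-matching relation $\tilde w_{i,x}(0,\eta)=-\bar z_{i+1,\xi}(0,\eta)$ for $i=1,2,3$, but in the construction this relation is only imposed for $i=1,2$ (producing $z_2$ and $z_3$). The function $\bar z_4$ is explicitly \emph{not} built to correct boundary data: it only modifies the right-hand side of the residual, and its boundary values are merely required to be bounded. Consequently the telescoping does not eliminate $\varepsilon^3 w_{3,x}(0,y)$, and your conclusion that only $\varepsilon^4\partial_x w_4(0,y)$ survives (with an $O(\varepsilon^4)$ bound on the outflow edge) is false. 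The paper's proof keeps precisely this term: $R_x(0,y)=-\varepsilon^3 w_{3,x}(0,y)-\varepsilon^4(\dots)$, where $w_{3,x}(0,y)=\widetilde Q_{2,x}(0,y/\varepsilon)E_2(y)$. Your final estimates are saved only because this surviving term is again of the shape $\varepsilon^3(\text{polynomial in }y/\varepsilon)E_2(y)$, hence $O(\varepsilon^3)$ in $L^\infty$ and $O(\varepsilon^{7/2})$ in $L^2$ — the same mechanism you used on the inflow edges, not the pointwise $O(\varepsilon^4)$ domination you claim for the outflow edges. You should correct the range of $i$ in the corner-matching, drop the claim that the outflow contribution is ``even stronger than needed,'' and apply the $\|E_2\|_{L^2}\leq C\varepsilon^{1/2}$ argument on all four edges.
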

\begin{proof}
   Again, we only exemplify the method of proof. The terms not discussed are estimated using the same techniques. 
   Let us start out with $R(1,y)$. Here, we have
   \begin{align*}
      R_x(1,y)
         &=\psi_x(1,y)-\psi_{0,x}(1,y)-\varepsilon \psi_{1,x}(1,y)-\varepsilon^2 \psi_{2,x}(1,y)\\
         &\quad-\varepsilon (v_{1,x}(1,y)+w_{1,x}(1,y))-\varepsilon^2(v_{2,x}(1,y)+w_{2,x}(1,y)+z_{2,x}(1,y))\\
         &\quad -\varepsilon^3(v_{3,x}(1,y)+w_{3,x}(1,y)+z_{3,x}(1,y))-\varepsilon^4(v_{4,x}(1,y)+w_{4,x}(1,y)+z_{4,x}(1,y)).
   \end{align*}
   Now, 
   \[
     \psi_x(1,y)-\psi_{0,x}(1,y)-\varepsilon \psi_{1,x}(1,y)-\varepsilon^2 \psi_{2,x}(1,y)=0
   \]
   due to the homogeneous Neumann boundary conditions at this boundary of the functions 
   $\psi,\psi_0,\psi_1$ and $\psi_2$. For the next terms we have
   \begin{align*}
      v_{1,x}(1,y)+w_{1,x}(1,y)
         &=-\frac{\psi_{0,x}(1,y)}{\varepsilon}E_1(1)+\frac{\psi_{0,xy}(1,y)}{b_2}E_2(y)\\
         &=-\frac{\psi_{0,x}(1,y)}{\varepsilon}E_1(1)
   \end{align*}
   because $\psi_0$ satisfies homogeneous Neumann boundary condition along $x=1$ and, hence, its $y$-derivative is zero. 
   The terms with index 2 are
   \begin{align*}
      w_{2,x}(1,y)
         &=\left(\frac{\psi_{1,xy}(1,0)}{b_2}-\frac{\beta_x(1)}{b_2^3}-\frac{\beta_x(1) y}{b_2^2 \varepsilon}\right)E_2(y)\\
         &=\frac{b_1 \psi_{0,xxy}(1,0)}{b_2^3}\left(1+\frac{b_2 y}{\varepsilon}\right)E_2(y)
          =0,
   \end{align*}
   due to the compatibility condition \eqref{eq:compcond3}, and
   \[
      v_{2,x}(1,y)+z_{2,x}(1,y)
         =-\frac{b_1}{\varepsilon}\left(\frac{\psi_{1,y}(1,0)}{b_1}-\frac{\alpha(y)}{b_1^3}-\frac{\alpha(y)}{b_1^2 \varepsilon}\right)E_1(1).
   \]
   The terms with index 3 contribute
   \begin{align*}
      v_{3,x}(1,y)+z_{3,x}(1,y)
         &=\left(\tilde{P}_{2,x}\left(\frac{1}{\varepsilon},y\right)-\frac{b_1}{\varepsilon}\tilde{P}_2\left(\frac{1}{\varepsilon},y\right)\right.\\
         &\quad\left.+\frac{w_2}{\varepsilon}E_2(y)-\frac{b_1}{\varepsilon}\left(\omega_1+\omega_2\frac{1}{\varepsilon}+\omega_3\frac{y}{\varepsilon}\right)E_2(y)\right)E_1(1)
   \end{align*}
   and
   \[
      w_{3,x}(1,y)=\tilde{Q}_{2,x}\left(1,\frac{y}{\varepsilon}\right)E_2(y)
   \]
   which is bounded by a constant due to $P_2(t)\e^{-t}\leq C$ for all $t$, where $P_2$ is a polynomial of order $2$. 
   The same arguments apply to the correction functions with index 4. 
   Combining all these terms, we have for some function $g$ bounded independently of 
   $\varepsilon$ and constants $c_7,c_8$ and $c_9$ independent of $\varepsilon$
   \[
      R_x(1,y)=g(y)E_1(1)+\varepsilon^3\left(c_7+c_8\left(\frac{y}{\varepsilon}\right)+c_9\left(\frac{y}{\varepsilon}\right)^2\right)E_2(y).
   \]
   Thus, we obtain
   \[
      \|R_x(1,\cdot)\|_{L^\infty(0,1)}\leq C \varepsilon^3
      \quad\mbox{and}\quad
      \|R_x(1,\cdot)\|_{L^2(0,1)}\leq C \varepsilon^{7/2}.
   \]
   On the outflow boundary at $x=0$ we have
   \begin{align*}
      R_x(0,y)
         &=\psi_x(0,y)-\psi_{0,x}(0,y)-\varepsilon \psi_{1,x}(0,y)-\varepsilon^2 \psi_{2,x}(0,y)\\
         &\quad-\varepsilon (v_{1,x}(0,y)+w_{1,x}(0,y))-\varepsilon^2(v_{2,x}(0,y)+w_{2,x}(0,y)+z_{2,x}(0,y)\\
         &\quad -\varepsilon^3(v_{3,x}(0,y)+w_{3,x}(0,y)+z_{3,x}(0,y))-\varepsilon^4(v_{4,x}(0,y)+w_{4,x}(0,y)+z_{4,x}(0,y)).
   \end{align*}
   The first term is zero and the next terms cancel with their corresponding layer-correction functions. So far, we have
   \[
      R_x(0,y)= -\varepsilon^3w_{3,x}(0,y)-\varepsilon^4(v_{4,x}(0,y)+w_{4,x}(0,y)+z_{4,x}(0,y)).
   \]
   The remaining terms are not vanishing and we have for some function $h$ bounded independently of $\varepsilon$ and constants $c_{10},c_{11}$ and $c_{12}$ independent of $\varepsilon$
   \[
      R_x(0,y)=\varepsilon^4 h(y)+\varepsilon^3\left(c_{10}+c_{11}\left(\frac{y}{\varepsilon}\right)+c_{12}\left(\frac{y}{\varepsilon}\right)^2\right)E_2(y).
   \]
   Thus, we obtain again
   \[
      \|R_x(0,\cdot)\|_{L^\infty(0,1)}\leq C \varepsilon^3 
      \quad\mbox{and}\quad
      \|R_x(0,\cdot)\|_{L^2(0,1)}\leq C \varepsilon^{7/2}.
   \]
   The normal derivative on the other sides can be estimated similarly and we obtain \eqref{eq:normR}.
\end{proof}
We summarize the results obtained so far in the next proposition. Recall that we assumed sufficiently smooth solutions to be existent.
\begin{prop}\label{prop:residual}
   Assume the compatibility conditions \eqref{eq:compcond1}, \eqref{eq:compcond2} and \eqref{eq:compcond3} 
   to be satisfied. We consider the approximation of $\psi$ given by 
   \[
      \Psi=\sum_{i=0}^2 \varepsilon^i \psi_i+\sum_{i=1}^4 \varepsilon^i v_i +\sum_{i=1}^4  \varepsilon^i w_i +\sum_{i=2}^4 \varepsilon^i z_i.
   \]
   Then, the residual $R=\psi-\Psi$ can be estimated by
   \[
      \|R\|_{L^\infty(\Omega)}\leq C \varepsilon^{3/2}
      \quad\mbox{and}\quad
      \|R\|_{H^1(\Omega)}\leq C \varepsilon^{3/2}.
   \]
\end{prop}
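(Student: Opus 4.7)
The plan is to apply the stability estimate of Theorem \ref{th:stab2d} to the residual $R = \psi - \Psi$ and insert the bounds collected in Lemmas \ref{lem:LR}, \ref{lem:Rrand}, and \ref{lem:Rnormrand}. Since $\psi$ satisfies homogeneous Dirichlet and Neumann data on $\Gamma$ while $\Psi$ is smooth, the function $R$ solves a problem of the form \eqref{eq:inhomo} with right-hand side $LR$ and admissible boundary data $g_1 = R|_\Gamma$, $g_2 = \partial_\boldn R|_\Gamma$ (with $\Psi$ itself, being sufficiently smooth by assumption, witnessing admissibility). Thus the proof reduces to a bookkeeping of orders of $\eps$.

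The right-hand side term $\|LR\|_{L^2(\Omega)}\leq C\eps^{5/2}$ is furnished by Lemma \ref{lem:LR}, and the $L^2$- and $L^\infty$-norms of $\partial_\boldn R$ on $\Gamma$ are supplied by Lemma \ref{lem:Rnormrand}. The only term in Theorem \ref{th:stab2d} not directly addressed by a lemma is $\|g_1\|_{C^{1/2}(\partial\Omega)}$; here I would invoke the interpolation inequality cited from \cite{Tr95} together with the bounds $\|R\|_{L^\infty(\Gamma)}\leq C\eps^3$ and $\|\partial_t R\|_{L^\infty(\Gamma)}\leq C\eps^2$ from Lemma \ref{lem:Rrand}. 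This gives, edge by edge,
\[
   \|R\|_{C^{1/2}(\Gamma_i)}\leq d_1 \|R\|_{C^0(\Gamma_i)}^{1/2}\|R\|_{C^1(\Gamma_i)}^{1/2}\leq C\eps^{3/2}\eps=C\eps^{5/2},
\]
and analogously $\|R\|_{C^1(\Gamma)}\leq C\eps^2$.

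Plugging the bounds
\[
   \|LR\|_{L^2}\leq C\eps^{5/2},\quad
   \|g_1\|_{C^{1/2}}\leq C\eps^{5/2},\quad
   \|g_1\|_{C^1}\leq C\eps^2,\quad
   \|g_2\|_{L^2}\leq C\eps^{7/2},\quad
   \|g_2\|_{C}\leq C\eps^3
\]
into the two inequalities of Theorem \ref{th:stab2d} then yields
\[
   \|R\|_{L^\infty(\Omega)}\leq C\bigl(\eps^{-1}(\eps^{5/2}+\eps^{5/2}+\eps^{7/2})+\eps^2+\eps^3\bigr)\leq C\eps^{3/2}
\]
and
\[
   \|R\|_{H^1(\Omega)}\leq C\eps^{-1}(\eps^{5/2}+\eps^{5/2}+\eps^{7/2})\leq C\eps^{3/2},
\]
which is the assertion. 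I expect no real obstacle here: the delicate work already lives in the three preceding lemmas, and the only non-mechanical step is recognizing that the H\"older interpolation bridges the gap between the $C^0$ and $C^1$ boundary bounds of Lemma \ref{lem:Rrand} and the $C^{1/2}$ norm demanded by Theorem \ref{th:stab2d}. The exponent $3/2$ is sharp in this presentation precisely because the worst contribution comes from the tangential-derivative bound $\eps^2$ on the boundary, amplified by the $\eps^{-1}$ prefactor after interpolation.
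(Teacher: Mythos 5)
Your proposal is correct and follows exactly the paper's own argument: apply Theorem \ref{th:stab2d} to $R$, feed in Lemmas \ref{lem:LR}--\ref{lem:Rnormrand}, and bridge to the $C^{1/2}(\Gamma)$-norm by H\"older interpolation between the $C^0$ and $C^1$ boundary bounds, yielding $\|R\|_{C^{1/2}(\Gamma)}\leq C\eps^{5/2}$ and hence both estimates. (Only your closing remark is slightly off: the interior term $\eps^{-1}\|LR\|_{L^2}\leq C\eps^{3/2}$ is equally responsible for the exponent $3/2$, not just the interpolated boundary term.)
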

\begin{proof}
   We make use of Theorem \ref{th:stab2d}. For this, using Lemmas \ref{lem:LR}, \ref{lem:Rrand}, \ref{lem:Rnormrand}, we are left with an estimate of the $C^{1/2}$-norm on $\Gamma$ of $R$. But, with the help of the estimates in both the $C^1$- and $L^\infty$-norm it follows that
   \[
      \|R\|_{C^{1/2}(\Gamma)}
      \leq C \varepsilon^{5/2}.
   \]
   Hence, the assertion eventually follows from Theorem \ref{th:stab2d}.
\end{proof}
\begin{theorem}\label{th:solcom}
   Assume the compatibility conditions \eqref{eq:compcond1}, \eqref{eq:compcond2} and \eqref{eq:compcond3}
   to be satisfied. Then the solution $\psi$ of \eqref{eq:fourthorder} can be decomposed 
   in a regular part $S$ and two layer parts in the following form:
   \[
      \psi(x,y)=S(x,y)+\varepsilon H(x,y) E_1(x)+\varepsilon I(x,y) E_2(y), \quad (0\leq x,y\leq 1)
   \]
   where the functions $S,H$ and $I$ are independently bounded of $\varepsilon$.
\end{theorem}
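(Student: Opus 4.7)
The proof I would give is mostly a bookkeeping exercise that rests on Proposition~\ref{prop:residual}; the structural content has already been done in the construction of the approximation $\Psi$. My plan has three steps.

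First, I would write $\psi = \Psi + R$, where $\Psi$ is the approximation from Proposition~\ref{prop:residual} and $R$ is the residual. By that proposition, $\|R\|_{L^\infty(\Omega)} \leq C\varepsilon^{3/2}$, so $R$ is in particular bounded independently of $\varepsilon$. Hence $R$ can simply be absorbed into the regular part of the decomposition.

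Second, I would regroup the terms in $\Psi$ according to their exponential decay. Setting
\[
   S(x,y) := \psi_0(x,y) + \varepsilon\psi_1(x,y) + \varepsilon^2 \psi_2(x,y) + R(x,y),
\]
the boundedness of $S$ follows from smoothness of the $\psi_i$ (independent of $\varepsilon$) and the residual bound above. The $v_i$-terms and $w_i$-terms are naturally grouped into the $E_1$- and $E_2$-layers respectively, and the corner terms $\varepsilon^i z_i$ must be assigned to one of the two layers, e.g.\ the $E_1$-layer via the pointwise bound $E_2(y)\leq 1$. Explicitly, I would define
\[
   \varepsilon H(x,y) E_1(x) := \sum_{i=1}^4 \varepsilon^i v_i(x,y) + \sum_{i=2}^4 \varepsilon^i z_i(x,y),
   \quad
   \varepsilon I(x,y) E_2(y) := \sum_{i=1}^4 \varepsilon^i w_i(x,y).
\]

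Third, I would verify that $H$ and $I$ are bounded independently of $\varepsilon$. This is where the main care is required. Each $v_i$ has the form $\widetilde{P}_{i-1}(x/\varepsilon, y) E_1(x)$ with $\widetilde{P}_{i-1}$ a polynomial of degree $i-1$ in the first variable with smooth, $\varepsilon$-independent coefficients. Multiplying by $\varepsilon^i$ and pulling one factor of $\varepsilon$ to the front yields $\varepsilon \cdot \varepsilon^{i-1}\widetilde{P}_{i-1}(x/\varepsilon,y) E_1(x)$; rewriting $\varepsilon^{i-1}(x/\varepsilon)^k = \varepsilon^{i-1-k}x^k$ with $k \leq i-1$ produces a polynomial in $x$ with coefficients that are non-negative powers of $\varepsilon$ times smooth functions of $y$, hence bounded on $\bar\Omega$. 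The same holds for the $w_i$. For the corner contributions $\varepsilon^i z_i$, a completely analogous calculation shows $\sum_{i=2}^4 \varepsilon^i z_i = \varepsilon \bar{H}(x,y) E_1(x) E_2(y)$ with $\bar{H}$ bounded, and then multiplying by $E_2(y)\leq 1$ shows $\bar{H}(x,y) E_2(y)$ can be safely added to the $E_1$-layer coefficient. Combining these observations, $H$ and $I$ are bounded independently of $\varepsilon$, which completes the decomposition.

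The only real obstacle is the bookkeeping in the third step: making sure that no negative power of $\varepsilon$ survives when one combines the explicit polynomial structures of $v_i$, $w_i$, $z_i$ with the factors $\varepsilon^i$. The construction in Section~\ref{sec:decomposition} was arranged precisely so that this cancellation works out, so the verification is a routine (though tedious) inspection of the degrees of $\widetilde{P}_{i-1}$, $\widetilde{Q}_{i-1}$, and $\bar{R}_{i-2}$.
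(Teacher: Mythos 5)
Your proposal is correct and follows essentially the same route as the paper: write $\psi=\Psi+R$, absorb $R$ into $S$ via Proposition~\ref{prop:residual}, and verify from the polynomial structure of the correction terms that no negative power of $\varepsilon$ survives in the layer coefficients. The only (immaterial) difference is that the paper absorbs the corner terms $z_3,z_4$ into the regular part $S$ (noting $z_2=0$ under \eqref{eq:compcond2}), whereas you fold them into the $E_1$-layer coefficient via $E_2(y)\leq 1$; both yield the required $\varepsilon$-uniform bounds.
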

\begin{proof}
   The result follows from the asymptotic expansion which was done in this section.
   Proposition \ref{prop:residual} yields that the residual $R$ is small enough and thus, 
   it can be incorporated into the smooth part $S$ due to the structure of the layer correcting 
   terms. The corner layer parts $z_3$ and $z_4$ are also incorporated into the smooth part.
\end{proof}
\begin{remark}
   The layers of $\psi$ are so-called weak layers. If we look at $\psi-S$, with $S$ being given by Theorem \ref{th:solcom}, we obtain
   \[
      \|\psi-S\|_{L^\infty(\Omega)}\leq C \varepsilon,
   \]
   but 
   \[
      \|\nabla(\psi-S)\|_{L^\infty(\Omega)}\leq C,
   \]
   that is the layers are visible in the first derivative for $\eps\to 0$.
\end{remark}

\subsection{Asymptotic expansion without compatibility conditions}
With the rationale presented, we have seen that an asymptotic expansion of arbitrary order is possible upon imposing certain 
compatibility conditions. In fact, for the construction of $\psi_2$ we needed
\eqref{eq:compcond2} and \eqref{eq:compcond3} to be satisfied. 
We make now an asymptotic expansion without any compatibility conditions and demonstrate that 
we will lose an $\varepsilon$-order in the estimates of the residual $R$, while we keep in mind that we assume the solutions occuring to be sufficiently smooth.

We keep the construction of the functions $\psi_0,\psi_1, v_1,w_1,v_2,w_2$ and $z_2$. 
In the next step, we formally set $\psi_2=0$. This implies that we impose homogeneous 
boundary conditions for $v_3$ and $w_3$. For $z_3$ we choose an arbitrary, exponentially 
decaying function that fulfils a corner-correction type problem, given by
\begin{align*}
   \bar{\Delta}^2 \bar{z}_3 +(b\cdot \bar{\nabla})\bar{\Delta} \bar{z}_3
      &=c\bar{\Delta} \bar{z}_2 \notag \\
      &=-\frac{c \psi_{0,xy}(0,0)\exp\{-b_1\xi\}\exp\{-b_2\eta\}(b_1^2+b_2^2)}{b_1b_2}\,\hspace*{2em}\mbox{in }(0,\infty)\times(0,\infty).
\end{align*}
Its boundary data only have to be bounded by a constant and need not to correct other data, 
thus we have some freedom in choosing $\bar{z}_3$. We take
\[
   z_3(x,y)=\frac{c \psi_{0,xy}(0,0)}{b_1b_2(b_1+b_2)}\left(\frac{x}{\varepsilon}+\frac{y}{\varepsilon}\right)E_1(x)E_2(y).
\]
Next, we consider an approximation $\Psi^\textnormal{new}$ of $\psi$ given by
\[
   \Psi^\textnormal{new}=\psi_0+\varepsilon\psi_1+\varepsilon v_1+\varepsilon^2  v_2 +\varepsilon^3 v_3+
                   \varepsilon w_1+\varepsilon^2 w_2+\varepsilon^3 w_3+\varepsilon^2 z_2 +\varepsilon^3 z_3.
\]
Following the strategies exemplified in the previous section, we derive estimates for the new residual.
\begin{lemma}
   For the residual $R^\textnormal{new}=\psi-\Psi^\textnormal{new}$ we get the following estimates:
   \begin{align*}
      \|LR^\textnormal{new}\|_{L^2(\Omega)}&\leq C \varepsilon^{3/2},\\
      \|R^\textnormal{new}\|_{L^\infty(\Gamma)}&\leq C \varepsilon^2,&
      \|\partial_t R^\textnormal{new}\|_{L^\infty(\Gamma)}&\leq C \varepsilon,\\
      \|\partial_\boldn R^\textnormal{new}\|_{L^\infty(\Gamma)}&\leq C \varepsilon^2,&
      \|\partial_\boldn R^\textnormal{new}\|_{L^2(\Gamma)}&\leq C \varepsilon^{5/2}\\
      \intertext{and finally}
      \|R^\textnormal{new}\|_{L^\infty(\Omega)}&\leq C \varepsilon^{1/2}.
   \end{align*}
\end{lemma}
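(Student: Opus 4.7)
The plan is to repeat, with the new approximation $\Psi^{\textnormal{new}}$, the three steps carried out in Lemmas \ref{lem:LR}, \ref{lem:Rrand} and \ref{lem:Rnormrand}: first estimate $\|LR^{\textnormal{new}}\|_{L^2(\Omega)}$ by expanding $L\Psi^{\textnormal{new}}$ via the operator identity \eqref{eq:ltilde} and its $y$-analogue; second, bound $R^{\textnormal{new}}$ and its tangential derivative on each edge of $\partial\Omega$; third, bound the normal derivative on each edge both in $L^\infty$ and in $L^2$. The final $L^\infty(\Omega)$-bound then follows from Theorem \ref{th:stab2d} after the $C^{1/2}$-interpolation $\|R^{\textnormal{new}}\|_{C^{1/2}(\Gamma)}\le C\|R^{\textnormal{new}}\|_{L^\infty(\Gamma)}^{1/2}\|R^{\textnormal{new}}\|_{C^1(\Gamma)}^{1/2}\le C\varepsilon^{3/2}$ used in Proposition \ref{prop:residual}.

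The key observation is that $\Psi^{\textnormal{new}}$ differs from the approximation $\Psi$ of the previous subsection exactly by the omission of $\varepsilon^2\psi_2$ and of the $\varepsilon^4$-corrections $v_4,w_4,z_4$, together with a different (but still exponentially decaying) choice of $z_3$. Each of these omissions lies one $\varepsilon$-power behind the leading term in the corresponding bound of Lemmas \ref{lem:LR}--\ref{lem:Rnormrand}, which explains why every estimate in the claim is exactly one order of $\varepsilon$ worse than in Proposition \ref{prop:residual}. Concretely, the absence of $\psi_2$ leaves the summand $\varepsilon^2\Delta^2\psi_1$ in $LR^{\textnormal{new}}$ (of $L^2$-norm $O(\varepsilon^2)$); the absence of $v_4,w_4,z_4$ leaves the previously cancelled boundary-layer terms of pointwise size $O(\varepsilon)$ concentrated in a strip of width $O(\varepsilon)$, contributing $O(\varepsilon^{3/2})$ to $\|LR^{\textnormal{new}}\|_{L^2}$ and hence the stated bound. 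For the boundary quantities, the identities $\psi_1(0,y)=-v_1(0,y)$ and $\psi_1(x,0)=-w_1(x,0)$ still cancel the $\varepsilon$-level Dirichlet data on the outflow edges, but the analogous $\varepsilon^2$-level cancellation---which would have required $\psi_2(0,y)=-(v_2+w_2+z_2)(0,y)$ and was exactly the condition forcing \eqref{eq:compcond2}--\eqref{eq:compcond3}---is no longer available, so an $O(\varepsilon^2)$ discrepancy persists on $\Gamma_+$. One tangential differentiation introduces an extra factor $\varepsilon^{-1}$ from $E_1,E_2$, yielding $O(\varepsilon)$ in $L^\infty$, and the normal derivative is handled in the same fashion.

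The main obstacle is careful bookkeeping: the uncancelled contributions of $v_2, w_2, z_2, v_3, w_3$ and of the new $z_3$ have to be tracked through the normal-derivative estimate, because several terms that vanished in Lemma \ref{lem:Rnormrand} (notably $\beta(1)=0$ and $\psi_{0,xxy}(1,0)=0$, which prevented $\varepsilon^{-1}$-blowups from boundary-layer differentiation) relied precisely on the compatibility conditions we now deliberately drop. In each case one has to verify that the naively $\varepsilon^{-2}$-looking contributions arising from two derivatives in a boundary layer of width $\varepsilon$ are multiplied by a sufficient power of $\varepsilon$ from the prefactors to yield exactly $O(\varepsilon^2)$ in $L^\infty(\Gamma)$ and $O(\varepsilon^{5/2})$ in $L^2(\Gamma)$; this is just as in the proof of Lemma \ref{lem:Rnormrand} but with the cancellations carried out one step earlier. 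Once the six boundary-order estimates are in place, substituting them into Theorem \ref{th:stab2d} yields $\|R^{\textnormal{new}}\|_{L^\infty(\Omega)}\le C\varepsilon^{1/2}$ with no further idea.
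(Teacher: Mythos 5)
Your proposal is correct and follows exactly the route of the paper, whose own proof simply states that the bounds follow by the same calculations as in Lemmas \ref{lem:LR}, \ref{lem:Rrand} and \ref{lem:Rnormrand}, with Theorem \ref{th:stab2d} supplying the final $L^\infty(\Omega)$ estimate. Your additional bookkeeping of where each single power of $\varepsilon$ is lost (the uncancelled $\varepsilon^2\Delta^2\psi_1$, the missing $\varepsilon^4$-corrections contributing $O(\varepsilon)\cdot O(\varepsilon^{1/2})$ in $L^2$, and the uncancelled $\varepsilon^2$-level Dirichlet data on $\Gamma_+$) is accurate and consistent with the stated bounds.
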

\begin{proof}
   The upper bounds follow by a straightforward calculation in the same way as in Lemmas \ref{lem:LR}, 
   \ref{lem:Rrand} and \ref{lem:Rnormrand}. For obtaining the last inequality, one has to use Theorem~\ref{th:stab2d} in addition.
\end{proof}
So we see that the $\varepsilon$-order of the residual $R^\textnormal{new}$ is not small enough to have a solution 
decomposition like in Theorem \ref{th:solcom}. We can only show the following.
\begin{prop}\label{prop:solcomnew}
   Consider equation \eqref{eq:fourthorder}. The solution $\psi$ can be represented as
   \[
      \psi(x,y)=S(x,y)+\varepsilon H(x,y) E_1(x)+\varepsilon I(x,y) E_2(y)+\varepsilon^2 J(x,y) E_1(x) E_2(y) +R^\textnormal{new}(x,y),\quad(0\leq x,y\leq 1)
   \]
   where the functions $S,H,I,J$ are bounded independently of $\varepsilon$ and the residual is of order $\varepsilon^{1/2}$.
\end{prop}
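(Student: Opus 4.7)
The plan is to simply identify the four bounded amplitudes $S,H,I,J$ by regrouping the terms of $\Psi^\textnormal{new}$ according to their exponential factors, and then absorb the residual $R^\textnormal{new}$ using the preceding lemma. Concretely, starting from $\psi=\Psi^\textnormal{new}+R^\textnormal{new}$, I would write
\[
   S(x,y)\coloneqq \psi_0(x,y)+\varepsilon\psi_1(x,y),
\]
and then group the $x$-layer contributions as $\varepsilon H(x,y)E_1(x)\coloneqq \varepsilon v_1+\varepsilon^2 v_2+\varepsilon^3 v_3$, the $y$-layer contributions as $\varepsilon I(x,y)E_2(y)\coloneqq \varepsilon w_1+\varepsilon^2 w_2+\varepsilon^3 w_3$, and the corner contributions as $\varepsilon^2 J(x,y)E_1(x)E_2(y)\coloneqq \varepsilon^2 z_2+\varepsilon^3 z_3$. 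Since each of $v_i$, $w_i$, $z_i$ was constructed as a product of the corresponding exponential with a polynomial in the stretched variables $x/\varepsilon$ and/or $y/\varepsilon$, the division by the exponential factor is literal and yields explicit expressions for $H$, $I$, $J$.

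The real content is then to verify that $S,H,I,J$ are bounded uniformly in $\varepsilon$ on $\overline{\Omega}$. For $S$ this is immediate from the assumed smoothness of $\psi_0,\psi_1$, which are $\varepsilon$-independent solutions of the reduced third-order problems of Proposition~\ref{prop:214}. For $H$, the three contributions read $v_1/E_1$ (a function of $y$ only, hence bounded), $\varepsilon v_2/E_1$ (whose only potentially problematic term $\varepsilon\cdot\alpha(y)x/(b_1^2\varepsilon)$ equals $\alpha(y)x/b_1^2$, bounded on $[0,1]^2$), and $\varepsilon^2 v_3/E_1=\varepsilon^2\widetilde{P}_2(x/\varepsilon,y)$, which, since $\widetilde{P}_2$ is a polynomial of degree two in its first argument, is uniformly bounded on $[0,1]^2$ after distributing the $\varepsilon^2$ against the monomials $(x/\varepsilon)^k$ with $k\leq 2$. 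The argument for $I$ is symmetric, and for $J$ one checks that $z_2/(E_1E_2)$ is a constant and $\varepsilon z_3/(E_1E_2)$ reduces to $\frac{c\psi_{0,xy}(0,0)}{b_1b_2(b_1+b_2)}(x+y)$, again bounded on $[0,1]^2$.

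The bound on the residual is supplied directly by the preceding lemma, which gives $\|R^\textnormal{new}\|_{L^\infty(\Omega)}\leq C\varepsilon^{1/2}$. Combining this with the regrouping yields exactly the stated decomposition.

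The main obstacle is purely bookkeeping: one has to carefully track the factors of $\varepsilon$ attached to the monomials in $x/\varepsilon$, $y/\varepsilon$ that appear in $v_2,v_3,w_2,w_3,z_3$ so as to confirm that every apparent negative power of $\varepsilon$ is killed by the prefactor $\varepsilon^i$ of the corresponding term in $\Psi^\textnormal{new}$. There is no subtle analytic step here beyond what was already done in the construction of the expansion and in the earlier lemmas on the residual; the proposition is essentially a restatement of those results after a change of grouping.
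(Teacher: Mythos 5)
Your proposal is correct and matches the paper's (implicit) argument: the paper offers no separate proof for this proposition, precisely because it is the regrouping of $\Psi^\textnormal{new}$ by exponential factors together with the residual bound $\|R^\textnormal{new}\|_{L^\infty(\Omega)}\leq C\varepsilon^{1/2}$ from the preceding lemma, exactly as you describe. Your bookkeeping of the powers of $\varepsilon$ against the monomials in the stretched variables is the right (and only) thing to check, and it goes through as you state.
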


\subsection*{Acknowledgement}
We would like to thank J\"urgen Rossmann for his help with providing a stability result 
for the solution of singularly perturbed fourth-order problems, see Theorem~\ref{th:stab2d}. 

\noindent This work was supported by the German Research Foundation (DFG) under Project No. FR 3052/2-1.
\bibliography{asym_arxiv}
\bibliographystyle{plain}
\end{document}